\documentclass[12pt,a4paper]{article}

\usepackage[spanish]{babel}
\usepackage[utf8]{inputenc}
\usepackage[T1]{fontenc}

\usepackage{graphicx}
\usepackage{epstopdf}

\usepackage{amsmath}
\usepackage{amssymb}
\usepackage{amsthm}
\usepackage{MnSymbol}
\usepackage{tensor}
\usepackage{bm}
\usepackage{tikz-cd}
\usepackage[makeroom]{cancel}
\usepackage[shortlabels]{enumitem}

\def\undertilde#1{\mathord{\vtop{\ialign{##\crcr
$\hfil\displaystyle{#1}\hfil$\crcr\noalign{\kern1.5pt\nointerlineskip}
$\hfil\tilde{}\hfil$\crcr\noalign{\kern1.5pt}}}}}

\usepackage{hyperref}


\parskip=3mm

\usepackage{float}
\floatstyle{boxed}
\restylefloat{figure}

\theoremstyle{plain}
\newtheorem{teo}{Teorema}
\newtheorem{lema}{Lema}
\newtheorem{prop}{Proposición}
\newtheorem{corol}{Corolario}

\theoremstyle{definition}
\newtheorem{defn}{Definición} 
\newtheorem{ejmp}{Ejemplo}
\newtheorem{obs}{Observación}

\usepackage{vmargin}
\usepackage{verbatim} 

\setpapersize{A4}
\setmargins{3.17cm}       
{2.5cm}                        
{15cm}                      
{22.00cm}                    
{10pt}                           
{1cm}                           
{0.5cm}                             
{3cm}                           

\newcommand{\HRule}{\rule{\linewidth}{0.5mm}}

\numberwithin{equation}{subsection}

\begin{document}

\begin{titlepage}
\begin{center}


\HRule \\[0.4cm]
{ \huge \bfseries Álgebras y grupos de Clifford, espinores algebraicos y aplicaciones a la física \\[0.4cm] }

\HRule \\[1.5cm]

\noindent
\large
Marcos R. A. Arcodía\footnote{marcodia@mdp.edu.ar}\\
~\\
{{\large\textbf{RESUMEN}}}
\end{center}
En el presente trabajo se introduce el álgebra de Clifford asociada a un espacio cuadrático, utilizando técnicas de álgebra universal y teoría algebraica de formas cuadráticas. También se definen los grupos de Clifford y los grupos Pin y Spin asociados a estas álgebras y se estudia la relación existente entre estos grupos y las isometrías del espacio cuadrático de base. Por último se introducen los espacios de espinores algebraicos como ideales minimales izquierdos sobre estas álgebras (equivalentemente representaciones irreducibles) y se mencionan algunas aplicaciones a la física.
~\\
~\\
~\\
~\\
~\\
~\\
~\\
~\\
~\\
\end{titlepage}

\newpage
\pagenumbering{Roman} 
\setcounter{page}{3}
\thispagestyle{empty} 

\newpage
\tableofcontents

\newpage
$\ $
\thispagestyle{empty} 

\newpage
\section{Prefacio}
Estas notas surgieron en el marco de un curso de postgrado en Estructuras Algebraicas que tomé en como parte del Doctorado en Ciencias Físicas en la Facultad de Ciencias Exactas y Naturales de la Universidad Nacional de Mar de Plata. En dicho curso tuve como tarea final elaborar una presentación sobre álgebras de Clifford basándome en el survey escrito por Jean Gallier titulado \emph{Clifford algebras, Clifford groups and a generalization of the quaternions: The Pin and Spin groups} \cite{Gallier}. A partir de dicha presentación preparé estas notas que fui extendiendo a lo largo del tiempo.

Quisiera agradecer a las docentes Sonia Trépode y Ana Clara García Elsener por haberme permitido explorar estos temas en el curso, y en particular a Ana por haber respondido siempre a mis consultas ayudándome a comprender mucho mejor lo estudiado.

\newpage
\section{Introducción}
\pagenumbering{arabic}
Las álgebras de Clifford son álgebras asociativas reales que surgen en distintas ramas de la física y que además resulta un campo interesante desde un punto de vista puramente matemático. En las ciencias físicas estas álgebras se introducen directamente a partir de generadores y relaciones, lo que permite encontrar una representación matricial de forma bastante directa. Si bien este tratamiento es muy práctico en física y simplifica los cálculos mediante la utilización de matrices, hay aspectos interesantes sobre el álgebra de Clifford que quedan ocultos. Más aún, las álgebras de Clifford dan lugar a conjuntos que son isomorfos a $\mathbb{R}$, $\mathbb{C}$ y $\mathbb{H}$ con lo cual, una forma de pensar a las mismas es como generalizaciones de estos conjuntos.

Un álgebra de Clifford está asociada a un espacio cuadrático, es decir, un espacio vectorial real con una forma cuadrática (o equivalentemente una forma bilineal) definida en él. Utilizando la existencia del álgebra tensorial para todo espacio vectorial, se definirá al álgebra de Clifford como un álgebra cociente de la misma. Este tratamiento proporciona gran claridad en cuanto a la relación entre las álgebras de Clifford y las transformaciones de simetría en el espacio cuadrático original. La relación entre el álgebra de Clifford y los grupos de isometrías se da a través de los grupos Pin y Spin que se introducen en el capítulo 3, donde el teorema de Cartan-Dieudonné juega un papel fundamental en las demostraciones de los teoremas. En todas estas notas las principales referencias son \cite{Gallier,VazdaRocha} y las fuentes citadas allí.

Otros objetos relacionado a las álgebras de Clifford son los espinores. Estos objetos surgen dentro de la física en la mecánica cuántica y la teoría de Dirac como entes que representan el estado cuántico de un sistema. En estas notas los introduciremos a través de representaciones lineales del álgebra de Clifford.

El trabajo se divide en tres secciones. La primera de ellas dedicada a brindar información preliminar necesaria para teoría de Clifford, esto es: la teoría de formas cuadráticas, isometrías y teorema de Cartán-Dieudonne (se sigue la referencia \cite{Guccione}), la teoría de álgebras reales y en particular álgebra tensorial y cocientes desde un punto de vista de estructuras algebraicas (donde utilicé los apuntes de la materia, los libros \cite{Artin}, \cite{Rotman} y el survey \cite{Gallier}) y cuestiones de representaciones lineales de álgebras y grupos (\cite{VazdaRocha,Artin,Hazewinkel}).

En la segunda parte se define el álgebra de Clifford y se analizan varios ejemplos. Luego se procede a establecer la relación mencionada entre álgebra de Clifford y las isometrías del espacio cuadrático a través de los grupos Pin y Spin y las representaciones adjunta y adjunta torcida del denominado grupo de Clifford. En esta sección se siguen a las referencias \cite{Gallier} y \cite{Lounesto}.

La última sección está dedicada a la introducción de los espinores algebraicos y mencionar aplicaciones en física. Para esto se define un tipo nuevo de representaciones del álgebra de Clifford. Se sigue principalmente la referencia \cite{VazdaRocha} y en menor medida \cite{Hazewinkel}.

\newpage
\section{Preliminares}
\subsection{Espacios cuadráticos, isometrías y teorema de Cartan-Dieudonné}\label{espacioscuad}

Para definir un álgebra de Clifford se necesita un espacio cuadrático, es decir un espacio vectorial con una forma bilineal simétrica definida en él. En esta sección introduciremos las definiciones asociadas a espacios vectoriales reales cuadráticos. Se sigue \cite{Guccione} como principal referencia.

\begin{defn}\label{esp-cuad}
Un \emph{espacio cuadrático real} es un espacio vectorial sobre $\mathbb{R}$, $\mathbb{V}$, provisto de una forma bilineal simétrica. Es decir una aplicación $\varphi{:}\mathbb{V}\times\mathbb{V}\rightarrow\mathbb{R}$ que cumple:
\begin{itemize}
\item $\varphi(a,b)=\varphi(b,a)$  $\forall a,b\in\mathbb{V}$\quad{(simetría)}
\item $\varphi(\lambda{a}+c,b)=\lambda\varphi(a,b)+\varphi(c,b)$\quad$\forall a,b,c\in\mathbb{V}${,}\;$\forall \lambda\in\mathbb{R}$\quad{(linealidad)}
\end{itemize}
Observemos que la linealidad en el primer argumento, junto con la simetría implican la bilinealidad.

Se introduce la \emph{forma cuadrática asociada}, $\Phi$, una aplicación de $\mathbb{V}$ en $\mathbb{R}$, definida por:
\begin{center}
{$\Phi(a)=\varphi(a,a)${,}}\\
\end{center}
que cumple: $\Phi(\lambda{a})=\lambda^2\Phi(a)\quad\forall\lambda\in\mathbb{R}{,}\;\forall{a}\in\mathbb{V}$.
\end{defn}
 Notemos que vale la siguiente igualdad:
\begin{equation}\label{bilin-cuad}
\varphi(a,b)=\frac{1}{2}(\Phi(a+b)-\Phi(a)-\Phi(b)),
\end{equation}
con lo cual se puede hablar de espacio cuadrático refiriendose indistintamente a la existencia de $\Phi$ o a la de $\varphi$.
Se definen además, de acuerdo a el valor que les asigna $\Phi$, tres tipos de vectores.
\begin{defn}\label{clasif-vectores}
Sea $a$ un vector de $\mathbb{V}$ distinto de cero, decimos que:
\begin{itemize}
\item$a$ es \emph{isotrópico} o \emph{de tipo luz} si $\Phi(a)=0$,
\item$a$ es \emph{de tipo tiempo} si $\Phi(a)>0$,
\item$a$ es \emph{de tipo espacio} si $\Phi(a)<0$.
\end{itemize}
A su vez un vector de tipo tiempo o de tipo espacio se dice \emph{anisotrópico}.
\end{defn}
La denominación tipo tiempo, luz y espacio tiene su origen en la teoría de relatividad especial de Einstein, donde se distinguen las trayectorias de rayos de luz (vectores tipo luz), de partículas (tipo tiempo) y de hipotéticos objetos que se moverían a velocidades superiores a la de la luz (tipo espacio). No hay una única convención en la literatura y las definiciones de vectores tipo espacio y tiempo suelen aparecer intercambiadas.

Notemos que si $\mathbb{V}$ es de dimensión finita, entonces dada una base podemos representar a la forma bilineal por una matriz cuadrada. En efecto, sea $B=\{e_{1},...,e_{n}\}$ una base de $\mathbb{V}$, entonces dos vectores $u$ y $v$ se pueden escribir como:
\begin{center}
$u=\sum_{i=1}^{n}u^{i}e_{i}$\;, $v=\sum_{i=1}^{n}v^{i}e_{i}$.
\end{center}
Por la bilinealidad de $\varphi$ se cumple que:
\begin{equation}\label{forma-matriz}
\varphi(u,v)=
\begin{pmatrix}
 u_{1} \ u_{2} \ \hdots \ u_{n} \\
\end{pmatrix}.
\begin{pmatrix}
   \ \ \ \varphi_{11} & \ \ \varphi_{12} & \ \ \hdots & \ \ \varphi_{1n} \ \ \\
         \ \ \varphi_{21} & \ \ \varphi_{22} & \ \ \hdots & \ \ \vdots \ \ \\
        \ \ \vdots & \ \ \vdots & \ \ \ddots & \ \ \vdots \ \ \\
        \ \ \varphi_{n1}  & \ \ \varphi_{n2} & \ \ \hdots & \ \ \varphi_{nn} \ \
\end{pmatrix}.
\begin{pmatrix}
	v_{1} \\
    	v_{2} \\
   	\vdots \\
         	v_{n} \\
\end{pmatrix}
\quad ,
\end{equation}
donde $\varphi_{ij}=\varphi(e_{i},e_{j})$.
Es un resultado conocido que todo espacio cuadrático admite una base ortogonal, es decir, una base $\{\tilde{e_{1}},...,\tilde{e_{n}}\}$, que cumple: $\varphi(\tilde{e_{i}},\tilde{e_{j}})=0\quad\forall{i\neq{j}}$. Dada tal base, la matriz de $\varphi$ es diagonal y contiene en su diagonal un número $p$ de números reales positivos, un número $q$ de reales negativos y un número $s$ de ceros. Como la dimensión del espacio es $n$, se tiene que cumplir $p+q+s=n$.

En el apéndice \ref{apA} se prueba que las cantidades $p$, $q$ y $s$ son invariabtes de la forma bilineal. Esto es, si pasamos a otra base ortogonal la matriz de $\varphi$ en dicha base seguirá teniendo el mismo número de elementos positivos, negativos y ceros en la diagonal. Esto permite relacionar los números $p$, $q$ y $s$ directamente con la forma bilineal y establecer la siguiente definición:
\begin{defn}\label{signat}
Dada una forma bilineal real $\varphi$ llamamos al par $(p,q)$ la \emph{signatura} de $\varphi$ y al número $s$ la \emph{nulidad} de $\varphi$
\end{defn}

Son de especial interés tanto matemáticamente como físicamente las formas bilineales no degeneradas.

\begin{defn}\label{no-degen}
Decimos que una forma bilineal $\varphi$ es \emph{no degenerada} si se cumple:
\begin{equation}
\varphi(u,v)=0\;\forall{v}\in\mathbb{V}\rightarrow{u}=0.
\end{equation}
En cuyo caso se dice que el espacio cuadrático es \emph{regular}.
\end{defn}

\begin{prop}\label{nodegen-nulidad}
$\varphi$ es no degenerada si y sólo si su nulidad es cero.
\end{prop}
\begin{proof}
$(\Rightarrow{)}$Probaremos el contrarrecíproco, es decir que si la nulidad es distinta de cero entonces $\varphi$ es degenerada.\\
Sea $s$ la nulidad de $\varphi$, entonces existe una base ortogonal donde la matriz de $\varphi$ tiene $s$ ceros en su diagonal. Podemos reordenar la base para que esos elementos sean los primeros $s$ vectores de la base. En dicho caso tendremos:
\begin{equation}
\varphi_{ii}=0\;\forall{i}\in\{1,...,s\}.
\end{equation}
Sea $u$ el vector que en dicha base tiene componentes $(\underbrace{1,...,1}_\text{s-veces},\underbrace{0,...,0}_\text{(n-s)-veces})$ y $v$ cualquier vector de componentes $(v_{1},..,v_{n})$ tenemos:
\begin{equation}
\begin{split}
\varphi(u,v)&=
\begin{pmatrix}
 1 \ \hdots \ 1 \, \ 0 \ \hdots \ 0 \\
\end{pmatrix}.
\begin{pmatrix}
   \ 0 & \ \  & \  & \ & \  & \ & \\
         \  & \ \ddots & \ & \ & \ \text{\huge{0}} & \ & \\
        \  & \ & \  0 & \ & \ & \ & \\
         \ & \ & \  & \ \varphi_{\small{{s+1,s+1}}} & \ & \ & \\
         \  & \ \text{\huge{0}} & \ & \ & \  \ddots \ &\\
        \  & \ & \ & \ & \ & \ &\varphi_{nn} \ \
\end{pmatrix}.
\begin{pmatrix}
	v_{1} \\
    	v_{2} \\
   	\vdots \\
         	v_{n} \\
\end{pmatrix}=\\
&=\underbrace{1\times{0}\times{v_{1}}+\hdots+1\times{0}\times{v_{s}}}_{\text{s-términos}}+\underbrace{0\times\varphi_{s+1,s+1}\times{v_{s+1}}+...+0\times\varphi_{n,n}\times{v_{n}}}_{\text{(n-s)-términos}} =0.
\end{split}
\end{equation}
Hemos encontrado un vector distinto de cero que es ortogonal a todos los vectores del espacio, por lo tanto $\varphi$ es degenerada.\\
($\Leftarrow$) Se puede ver fácilmente que si la nulida es cero, entonces es una forma bilineal no degenerada.
\end{proof}
\begin{obs}\label{nulidad-signat}
Si el espacio cuadrático es regular, entonces la signatura $(p,q)$ cumple $p+q=n$. Cuando la forma bilineal es definida positiva se tiene que $q=0$ y si es definida negativa $p=0$.
\end{obs}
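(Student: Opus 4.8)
The plan is to deduce everything from Proposición \ref{nodegen-nulidad} together with the relation $p+q+s=n$ and the fact (from Apéndice \ref{apA}) that the triple $(p,q,s)$ can be read off from any orthogonal basis. For the first assertion: since the space is regular, $\varphi$ is non-degenerate, so Proposición \ref{nodegen-nulidad} gives $s=0$; substituting into $p+q+s=n$ yields $p+q=n$ immediately.

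For the definite cases, first I would recall that ``$\varphi$ definida positiva'' means $\Phi(a)>0$ for every $a\in\mathbb{V}$ with $a\neq 0$ (equivalently, every nonzero vector is de tipo tiempo), and symmetrically for definida negativa. Fix an orthogonal basis $\{\tilde e_1,\dots,\tilde e_n\}$, so that the matrix of $\varphi$ is diagonal with entries $\varphi_{ii}=\varphi(\tilde e_i,\tilde e_i)=\Phi(\tilde e_i)$. If $\varphi$ is positive definite, then for each $i$ the vector $\tilde e_i\neq 0$ forces $\varphi_{ii}=\Phi(\tilde e_i)>0$; hence all $n$ diagonal entries are strictly positive, there are no zero or negative entries, and therefore $p=n$, $q=0$ (and $s=0$, consistent with positive definiteness implying non-degeneracy). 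The negative definite case is identical with inequalities reversed, giving $p=0$ (and $q=n$, $s=0$).

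There is essentially no obstacle here: the only point requiring care is that the conclusion must not depend on the chosen orthogonal basis, and that is precisely what the invariance of $(p,q,s)$ established in Apéndice \ref{apA} guarantees, so evaluating $\Phi$ on the basis vectors of one convenient orthogonal basis suffices. One may also remark, as a sanity check, that Observación \ref{nulidad-signat} is consistent with \eqref{bilin-cuad}: a positive definite $\Phi$ cannot vanish on a nonzero vector, so no isotropic vectors exist and $s=0$ a fortiori.
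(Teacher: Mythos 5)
Tu argumento es correcto y coincide con el razonamiento implícito del texto: el artículo enuncia esto como una observación sin demostración, y la justificación natural es exactamente la que das (regularidad $\Rightarrow$ $s=0$ por la Proposición~\ref{nodegen-nulidad}, luego $p+q=n$ de $p+q+s=n$; y en el caso definido se evalúa $\Phi$ en los vectores de una base ortogonal, usando la invariancia de $(p,q,s)$ del Apéndice~\ref{apA}). No hay nada que objetar.
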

A continuación analizaremos las isometrías lineales del espacio vectorial $\mathbb{V}$. $\text{Aut}{(\mathbb{V})}$ será el espacio de isomorfismos lineales de $\mathbb{V}$ en sí mismo.
\begin{defn}\label{isometria}
Sea $f\in\text{Aut}{(\mathbb{V})}$ decimos que $f$ es una \emph{isometría} respecto del espacio cuadrático $(\mathbb{V},\Phi)$ si se cumple $\Phi(f(u))=\Phi(u)\;\forall{u}\in\mathbb{V}$ , o equivalentemente si se cumple  $\varphi(f(u),f(v))=\varphi(u,v)\;\forall{u,v}\in{\mathbb{V}}$.

Se puede ver que las isometrías forman un grupo. Al mismo lo denotaremos por O($\Phi$) y lo llamamos \emph{el grupo de isometrías respecto de $\Phi$}. Denotamos por SO($\Phi$) al subgrupo de O($\Phi$) compuesto por las isometrías que tienen determinante positivo.
\end{defn}

Notemos que uno podría pensar que las isometrías respecto de la forma bilineal $\phi$ no son exactamente las mismas que las de la forma cuadrática $\Phi$, sin embargo, en virtud de la ecuación \ref{bilin-cuad} las isometrías de una y de la otra son exactamente las mismas.

Un caso particular de isometrías son las reflexiones respecto de hiperplanos. Sea $x$ un vector anisotrópico cualquiera, $H_{x}=x^{\perp}:=\{y\in\mathbb{V}|\;\varphi(x,y)=0\}$ define un subespacio lineal de $\mathbb{V}$ de dimensión $n-1$. Llamamamos a dicho subespacio \emph{hiperplano normal a x}.

\begin{defn}\label{defn:6}
Sea $H_{x}$ un hiperplano con vector normal $x$ anisotrópico, la \emph{reflexión $s_{x}$ respecto de $H_{x}$}, $s_{x}\in{O(\Phi)}$ está dada por:
\begin{equation}
\begin{split}
s_{x}&:\mathbb{V}\longrightarrow\mathbb{V}\\
s_{x}(u)=u-&\frac{2\varphi(u,x)}{\Phi(x)}x\quad\forall{u}\in\mathbb{V}.
\end{split}
\end{equation}
$s_{x}\notin{\text{SO}(\Phi)}$ pues $\det(s_{x})=-1$. Notemos que $s_{\lambda{x}}=s_{x}\;\forall{\lambda\nequal{0}}$
\end{defn}

Habiendo definido la reflexión respecto de un hiperplano, podemos enunciar el teorema de Catan-Dieudonné, de gran importancia en la geometría y la teoría algebraica de formas cuadráticas. La demostración del mismo no será expuesta dado que se requieren elementos de teoría algebraica de formas cuadráticas y excede los contenidos del presente documento (una demostración puede encontrarse en \cite{Guccione}). Este teorema es indispensable para demostrar resultados importantes en el contexto de álgebras de Clifford y los grupos Pin y Spin. 
\begin{teo}\label{teo-cd}
Sea $(\mathbb{V},\Phi)$ un espacio cuadrático regular de dimensión $n$, entonces cada isometría distinta de la identidad $f\in\text{O}(n)$ puede escribirse como composición de a lo sumo $n$ reflexiones respecto de hiperplanos.
\end{teo}

Una consecuencia importante de este teorema es el siguiente corolario.
\begin{corol}\label{grupo-so}
Cada isometría tiene determinante $+1$ ó $-1$. Las isometrías de determinante $+1$ forman un subgrupo $\text{SO}(\Phi)$ de índice 2 de $\text{O}(\Phi)$. Este subgrupo es el núcleo del morfismo de grupos $\det{:}\text{O}(\Phi)\longrightarrow\{-1,1\}$ y se llama \emph{grupo especial ortogonal de $\Phi$}.
\end{corol}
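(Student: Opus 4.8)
El plan es deducir el enunciado del Teorema \ref{teo-cd} (Cartan-Dieudonné) junto con propiedades elementales del determinante. Primero observaría que $\det\colon\text{O}(\Phi)\to\mathbb{R}^{\times}$ está bien definida —toda isometría es un automorfismo lineal, luego tiene determinante no nulo— y es un morfismo de grupos, por la multiplicatividad del determinante.

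A continuación probaría que la imagen de este morfismo está contenida en $\{-1,1\}$. Para $f=\text{id}$ es inmediato. Si $f\neq\text{id}$, el Teorema \ref{teo-cd} permite escribir $f=s_{x_1}\circ\cdots\circ s_{x_k}$ como composición de $k\le n$ reflexiones respecto de hiperplanos; como $\det(s_{x_i})=-1$ para cada $i$ (Definición \ref{defn:6}), resulta $\det(f)=(-1)^k\in\{-1,1\}$. (Esto también puede verse directamente: de $\varphi(f(u),f(v))=\varphi(u,v)$ se obtiene $M^{t}GM=G$, donde $M$ es la matriz de $f$ y $G$ la matriz de Gram de $\varphi$ en una base fija; tomando determinantes y usando que $\det G\neq 0$ por la regularidad del espacio, sale $\det(M)^2=1$.) Para la sobreyectividad de $\det\colon\text{O}(\Phi)\to\{-1,1\}$ basta notar que $\text{id}\mapsto 1$ y que, al ser el espacio regular (con $n\ge 1$), existe algún vector anisótropo $x$ —de lo contrario $\Phi\equiv 0$ y por \eqref{bilin-cuad} también $\varphi\equiv 0$, contradiciendo la no degeneración—, de modo que $s_x\in\text{O}(\Phi)$ con $\det(s_x)=-1$.

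Finalmente concluiría vía el primer teorema de isomorfismo: por definición $\text{SO}(\Phi)=\{f\in\text{O}(\Phi):\det(f)=1\}=\ker(\det)$, que es un subgrupo normal de $\text{O}(\Phi)$ por ser núcleo de un morfismo; y $\text{O}(\Phi)/\text{SO}(\Phi)\cong\text{Im}(\det)=\{-1,1\}$, grupo de orden $2$, luego $[\text{O}(\Phi):\text{SO}(\Phi)]=2$.

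No anticipo obstáculos de fondo: la demostración es esencialmente formal una vez que se dispone del Teorema \ref{teo-cd}. Los únicos puntos que conviene no pasar por alto son recordar que cada reflexión hiperplana tiene determinante $-1$ (ya incluido en la Definición \ref{defn:6}) y justificar la existencia de un vector anisótropo, que es lo que asegura la sobreyectividad de $\det$.
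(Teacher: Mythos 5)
Tu propuesta es correcta y sigue exactamente el camino que el texto insinúa: el paper no incluye demostración y presenta el corolario como consecuencia inmediata del Teorema~\ref{teo-cd}, que es precisamente tu deducción ($\det(f)=(-1)^k$ al factorizar en reflexiones, suryectividad vía un vector anisótropo y el primer teorema de isomorfismo para el índice $2$). El argumento alternativo con $M^{t}GM=G$ y la justificación explícita de la existencia de un vector anisótropo son añadidos válidos que completan detalles que el texto da por sobreentendidos.
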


\subsection{Álgebras sobre un cuerpo, álgebra tensorial}

Esta sección estará dedicada a la introducción de las estructuras algebraicas que utilizaremos, esto es álgebra sobre un cuerpo y el álgebra tensorial de un espacio vectorial.

\begin{defn}\label{algebra-sobre-cuerpo}
Dado un cuerpo $\mathbb{K}$, una \emph{$\mathbb{K}$-álgebra} o un \emph{álgebra sobre $\mathbb{K}$} es un espacio vectorial $A$ sobre $\mathbb{K}$, junto con una operación bilineal interna $\cdot\,{:}\,A\times{A}\rightarrow{A}$ que hace de $A$ un anillo con unidad $1_{A}$.

Dadas dos $\mathbb{K}$-álgebras A y B, un \emph{morfismo de $\mathbb{K}$-álgebras}, $h\,{:}\,A\rightarrow{B}$ es una transformación lineal entre los espacios vectoriales $(A,+)$, $(B,+)$ que además es morfismo de anillos entre $(A,\cdot)$, $(B,\cdot)$ y que cumple $h(1_{A})=1_{B}$
\end{defn}

\begin{ejmp}\label{ej-algebras-sobre-cuerpos}
Los siguientes son ejemplos de álgebras sobre algún cuerpo:
\begin{itemize}
\item El anillo de matrices cuadradas de $n\times{n}$ sobre un cuerpo $\mathbb{K}$ es una $\mathbb{K}$-álgebra.
\item El \emph{álgebra de los cuaternios, $\mathbb{H}$}, es un álgebra sobre $\mathbb{R}$:
Sea $Q_{8}:=\{\pm{\bold{1}},\pm{\bold{i}},\pm{\bold{j}},\pm{\bold{k}}\}$ el grupo de cuaternios, visto como subgrupo finito del grupo de matrices cuadradas inversibles complejas de dimensión 2, donde:
\begin{equation}
\bold{1}= \begin{pmatrix}
		1 \ & 0\\
		 0 \ & 1\\
	       \end{pmatrix}
;\quad
\bold{i}= \begin{pmatrix}
		i \ & 0\\
		 0 \ & -i\\
	       \end{pmatrix}
;\quad
\bold{j}= \begin{pmatrix}
		0 \ & 1\\
		 -1 \ & 0\\
	       \end{pmatrix}
;\quad
\bold{k}= \begin{pmatrix}
		0 \ & i\\
		 i \ & 0\\
	       \end{pmatrix}.
\end{equation}
Sea $\mathbb{H}$ el conjunto definido por $\mathbb{H}:=\{a\bold{1}+b\bold{i}+c\bold{j}+d\bold{k}\,|\, a,b,c,d\in\mathbb{R}\}$, entonces $\mathbb{H}$ es un álgebra sobre $\mathbb{R}$. 

Claramente $\mathbb{H}$ es espacio vectorial sobre $\mathbb{R}$, puesto que el conjunto $\{\bold{1},\bold{i},\bold{j},\bold{k}\}$ conforma una base. Además, como el álgebra de matrices sobre $\mathbb{C}$ ya tiene estructura de anillo, para ver que $\mathbb{H}$ es álgebra, hay que ver que el producto en este conjunto es una operación cerrada. En efecto, se cumplen las igualdades:
\begin{equation}
\begin{split}
\bold{1}^2=&1{;}\\
\bold{i}^2=\bold{j}^2 =\bold{k}^2=&\bold{i}\bold{j}\bold{k}=-1{;}\\
\bold{i}\bold{j}=-\bold{j}\bold{i}&=\bold{k}{;}\\
\bold{i}\bold{k}=-\bold{k}\bold{i}&=\bold{j}{;}\\
\bold{j}\bold{k}=-\bold{k}\bold{j}&=\bold{i}{,}\\
\end{split}
\end{equation}
por lo que el producto es cerrado en $\mathbb{H}$.
\end{itemize}
\end{ejmp}

\begin{defn}\label{ideal}
Un \emph{ideal} $\mathcal{I}$ de una $\mathbb{K}$-álgebra $A$ es un subespacio vectorial de $A$ que además es un ideal bilátero de $A$ con la estructura de anillo dada por la multiplicación, es decir $a\mathcal{I}\subset{\mathcal{I}}, \mathcal{I}a\subset{\mathcal{I}}\;\forall{a}\in{A}$

El \emph{ideal generado por el conjunto $X$, $\langle{X}\rangle$}, es el menor ideal del álgebra (respecto de la inclusión) que contiene al conjunto $X$. Puede probarse que dado un conjunto $X$ dicho ideal está dado por:
\begin{equation}
\langle{X}\rangle:=\{a_1x_1b_1+...+a_kx_kb_k\;|\; a_i,b_i\in{A}\;;\;{x}_i\in{X}\;\forall{i\in\{1,...,k\}}\; \text{con }k\in{\mathbb{N}}\}
\end{equation}
\end{defn}

\begin{defn}\label{algsimple}
Diremos que una $\mathbb{K}$-álgebra $A$ es \emph{simple} si $A^2=\{ab\ | \ a\in{A}, \ b\in{A}\}\neq\{0\}$ y sus únicos ideales biláteros son los triviales, es decir $\{0\}$ y $A$. Además una $\mathbb{K}$-álgebra es \emph{semisimple} si se escribe como suma directa de subálgebras simples.
\end{defn}

\begin{defn}\label{cocientealgebraideal}
Dada un álgebra $A$ sobre un cuerpo $\mathbb{K}$ y un ideal $\mathcal{I}$ de $A$ definimos la siguiente relación de equivalencia en $A$. Decimos que dos elementos $a$ y $b$ en $A$ son equivalentes y notamos $a\sim{b}$ si $a-b\in\mathcal{I}$. Se puede ver fácilmente que $\sim$ es una relación de equivalencia en el álgebra $A$.

Definimos la clase de equivalencia del elemento $a$, $\overline{a}$ como el conjunto de todos los elementos de $A$ que son equivalentes a $a$, esto es, $\overline{a}=\{b\in{A}|a-b\in\mathcal{I}\}=a+\mathcal{I}$. Denotamos a la familia de todas las clases de equivalencia por $A/{\mathcal{I}}$ y lo llamamos el \emph{cociente de $A$ módulo el ideal $\mathcal{I}$}.
\end{defn}

\begin{prop}\label{estructuracociente}
Dada un álgebra $A$ sobre un cuerpo $\mathbb{K}$ y un ideal $\mathcal{I}$, el conjunto cociente $A/\mathcal{I}$ tiene esctructura de álgebra sobre el mismo cuerpo $\mathbb{K}$ y la función $\pi{:}\,{A}\rightarrow{A/\mathcal{I}}$ que asocia a cada elemento su clase de equivalencia, es un morfismo suryectivo de álgebras, con $\ker{\pi}=\mathcal{I}$.
\end{prop}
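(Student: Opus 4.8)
El plan es verificar por separado las tres afirmaciones de la proposición: (i) que $A/\mathcal{I}$ es un espacio vectorial sobre $\mathbb{K}$, (ii) que el producto heredado está bien definido y le da estructura de álgebra, y (iii) que $\pi$ es un morfismo suryectivo de álgebras con núcleo $\mathcal{I}$. La herramienta central en todos los casos es la buena definición de las operaciones sobre las clases de equivalencia, para lo cual es esencial que $\mathcal{I}$ sea simultáneamente subespacio vectorial (cerrado bajo suma y bajo producto por escalares) e ideal bilátero (absorbe productos por ambos lados).

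Primero definiría las operaciones en $A/\mathcal{I}$ de la manera natural: $\overline{a}+\overline{b}:=\overline{a+b}$ y $\lambda\overline{a}:=\overline{\lambda a}$. Para ver que están bien definidas, tomo representantes alternativos $a'\sim a$, $b'\sim b$, es decir $a-a'\in\mathcal{I}$ y $b-b'\in\mathcal{I}$; entonces $(a+b)-(a'+b')=(a-a')+(b-b')\in\mathcal{I}$ porque $\mathcal{I}$ es cerrado bajo sumas, y $\lambda a-\lambda a'=\lambda(a-a')\in\mathcal{I}$ porque $\mathcal{I}$ es cerrado bajo producto por escalares. Los axiomas de espacio vectorial (asociatividad, conmutatividad de la suma, neutro $\overline{0}=\mathcal{I}$, opuesto $\overline{-a}$, distributividades) se trasladan directamente desde $A$ clase por clase, ya que $\pi$ respeta las operaciones por definición.

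A continuación defino el producto $\overline{a}\cdot\overline{b}:=\overline{ab}$. Aquí está el punto más delicado y donde espero el único obstáculo real: la buena definición del producto. Si $a'=a+x$ y $b'=b+y$ con $x,y\in\mathcal{I}$, entonces
\begin{equation}
a'b'=(a+x)(b+y)=ab+ay+xb+xy,
\end{equation}
y debo comprobar que $ay+xb+xy\in\mathcal{I}$. En efecto, $ay\in\mathcal{I}$ porque $\mathcal{I}$ es ideal izquierdo ($a\mathcal{I}\subset\mathcal{I}$), $xb\in\mathcal{I}$ porque $\mathcal{I}$ es ideal derecho ($\mathcal{I}a\subset\mathcal{I}$), y $xy\in\mathcal{I}$ por cualquiera de las dos (o porque $\mathcal{I}$ es subanillo); la suma de los tres sigue en $\mathcal{I}$ por ser subespacio. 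Por tanto $a'b'-ab\in\mathcal{I}$ y $\overline{a'b'}=\overline{ab}$. Con el producto bien definido, la asociatividad, la bilinealidad y la existencia de unidad $\overline{1_A}$ se heredan de $A$ evaluando las identidades correspondientes en representantes y aplicando $\pi$; por ejemplo $\overline{a}(\overline{b}+\overline{c})=\overline{a(b+c)}=\overline{ab+ac}=\overline{a}\,\overline{b}+\overline{a}\,\overline{c}$, y análogamente para la compatibilidad con escalares $\lambda(\overline{a}\,\overline{b})=\overline{\lambda(ab)}=\overline{(\lambda a)b}=(\lambda\overline{a})\overline{b}$. Esto establece que $A/\mathcal{I}$ es una $\mathbb{K}$-álgebra.

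Finalmente, para $\pi:A\to A/\mathcal{I}$, $\pi(a)=\overline{a}$: la suryectividad es inmediata, pues toda clase es de la forma $\overline{a}$ para algún $a\in A$. Que $\pi$ es morfismo de álgebras es exactamente la definición de las operaciones cociente: $\pi(a+b)=\overline{a+b}=\overline{a}+\overline{b}=\pi(a)+\pi(b)$, $\pi(\lambda a)=\lambda\pi(a)$, $\pi(ab)=\overline{ab}=\overline{a}\,\overline{b}=\pi(a)\pi(b)$ y $\pi(1_A)=\overline{1_A}=1_{A/\mathcal{I}}$. Para el núcleo: $a\in\ker\pi$ si y sólo si $\overline{a}=\overline{0}$, lo cual ocurre si y sólo si $a-0=a\in\mathcal{I}$; luego $\ker\pi=\mathcal{I}$. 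Esto completa la demostración; no anticipo dificultades más allá de la verificación rutinaria de la buena definición del producto descrita arriba.
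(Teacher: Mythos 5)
Tu demostración es correcta y sigue esencialmente el mismo camino que la del texto: definir las operaciones en $A/\mathcal{I}$ vía representantes, verificar la buena definición (donde el punto clave es que $\mathcal{I}$ sea ideal bilátero para el producto) y comprobar que $\pi$ es morfismo suryectivo con núcleo $\mathcal{I}$. La única diferencia es de detalle: el texto delega la buena definición en resultados conocidos sobre cocientes de anillos y de espacios vectoriales, mientras que tú la verificas explícitamente, lo cual es perfectamente válido.
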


\begin{proof}
Es un resultado conocido que si $A$ es un anillo e $\mathcal{I}$ es un ideal bilátero, entonces $A/\mathcal{I}$ es un anillo y $\pi$ es morfismo suryectivo de anillos con $\ker{\pi}=\mathcal{I}$. Por otro lado, también se sabe que si $A$ es un espacio vectorial sobre $\mathbb{K}$ e $\mathcal{I}$ es un subespacio de $A$, entonces el cociente $A/I$ tiene estructura de espacio vectorial sobre $\mathbb{K}$ y $\pi$ es transformación lineal suryectiva con $\ker(\pi)=\mathcal{I}$. Notemos que como $\pi$ es suryectiva cualquier elemento en el cociente se puede expresar como $\overline{a}=\pi(a)$ con $a\in{A}$. Las operaciones $+$, $\cdot$ y el producto por escalar $\star$ en $A/\mathcal{I}$ son:
\begin{equation}
\begin{split}
\overline{a}\cdot\overline{b}=&\pi(ab)=ab+\mathcal{I}\quad\forall{\overline{a},\overline{b}}\in{A/\mathcal{I}}\\
\overline{a}+\overline{b}=&\pi(a+b)=a+b+\mathcal{I}\quad\forall{\overline{a},\overline{b}}\in{A/\mathcal{I}}\\
\lambda\star{\overline{a}}=&\pi(\lambda{a})=\lambda{a}+\mathcal{I}\quad\forall{\overline{a}}\in{A/\mathcal{I}}\;y\;\forall{\lambda}\in{\mathbb{K}}\\
\end{split}
\end{equation}
Se puede ver que estas operaciones están bien definidas, es decir que no dependen de los representantes elegidos. Por ejemplo, si $a\sim{a'}$, $b\sim{b'}$, entonces $\overline{a'}\cdot\overline{b'}=\overline{a}\cdot\overline{b}$. Todo esto prueba que $A/\mathcal{I}$ tiene estructura de álgebra y que $\pi$ es morfismo de anillos y morfismo de espacios vectoriales (transformación lineal) suryectivo, con $\ker{\pi}=\mathcal{I}$. Para ver que $\pi$ es morfismo de álgebras resta ver que $\pi(1_{A})=1_{A/\mathcal{I}}$.

Tenemos que $\overline{a}\cdot\pi(1_{A})=\pi(a)\cdot\pi(1_{A})=\pi(a1_{A})=\pi(a)=\overline{a}$, entonces efectivamente $\pi(1_{A})$ es el neutro respecto de $\cdot$ en $A/\mathcal{I}$
\end{proof}

A continuación se introducirán algunos elementos del álgebra tensorial que serán necesarios para la definición de álgebras de Clifford. Este apartado pretende ser simplemente una introducción. Para un análisis más detallado se puede consultar por ejemplo el capítulo 2 de \cite{Rotman}.

\begin{defn}\label{prodtensor}
Dados dos espacios vectoriales $E$ y $F$ sobre un cuerpo $\mathbb{K}$, un \emph{producto tensorial de $E$ y $F$} es un par $(E\otimes{F},\otimes)$ donde $E\otimes{F}$ es un $\mathbb{K}$-espacio vectorial y $\nobreak{\otimes\,{:}\,{E\times{F}}\rightarrow{E\otimes{F}}}$ es una función bilineal tal que para todo $\mathbb{K}$-espacio vectorial $G$ y toda función bilineal $f\,:\,E\times{F}\rightarrow{G}$ existe una única transformación lineal $f_{\otimes}\,:\,E\otimes{F}\rightarrow{G}$ tal que $f=f_{\otimes}\circ{\otimes}$. Esto es $f(u,v)=f_{\otimes}(u\otimes{v})$ o equivalentemente, el siguiente diagrama es conmutativo:
\[
\begin{tikzcd}[sep=5em]
E\times{F} \arrow{r}{\otimes} \arrow{rd}{f} 
  & E\otimes{F} \arrow[dashrightarrow]{d}{\exists{!}f_{\otimes}} \\
    & G
\end{tikzcd}
\]
Hemos definido a $E\otimes{F}$ a partir de una propiedad universal, con lo cual el mismo queda determinado a menos de isomorfismos.
\end{defn}

\begin{obs}\label{tensorprod-propiedades}
Notemos que:
\begin{itemize}
\item Los elementos $u\otimes{v}$ con $u\in{E}$ y $v\in{F}$ generan a $E\otimes{F}$.
\item Cuando pueda ser confuso deberá escribirse $E\otimes_{\mathbb{K}}{F}$
\item Se tienen los isomorfismos naturales:
\begin{equation}
E\otimes(F\otimes{G})\cong{(E\otimes{F})\otimes{G}}\quad{y}\quad E\otimes{F}\cong{F\otimes{E}}
\end{equation}
\item Si además $E$ y $F$ son álgebras sobre $\mathbb{K}$ se puede dotar a $E\otimes{F}$ también de una estructura de álgebra. La manera de hacerlo no es única. Un caso puede ser, definir el producto entre dos generadores (es decir, elementos de la forma $u\otimes{v}$) de la siguiente manera:
\begin{equation}
(a\otimes{b})\cdot{(c\otimes{d})}=(ac\otimes{bd})\quad{a,c\in{E}}\quad{b,d\in{F}}
\end{equation}
Como veremos más adelante, hay otros productos que se pueden definir en el producto tensorial que lo convierten en un álgebra.
\end{itemize}
\end{obs}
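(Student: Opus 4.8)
El plan es deducir todas las afirmaciones de la observación a partir únicamente de la propiedad universal de la Definición \ref{prodtensor}, sin usar ninguna construcción concreta de $E\otimes F$. Primero probaría que los tensores elementales $u\otimes v$ generan $E\otimes F$: si $W$ es el subespacio generado por ellos, la correstricción $\otimes\colon E\times F\to W$ sigue siendo bilineal, así que la propiedad universal aplicada con $G=W$ da una transformación lineal $g\colon E\otimes F\to W$ con $g(u\otimes v)=u\otimes v$; componiendo con la inclusión $\iota\colon W\hookrightarrow E\otimes F$, la aplicación $\iota\circ g$ es lineal y satisface $(\iota\circ g)\circ\otimes=\otimes$, y como la identidad de $E\otimes F$ también satisface $\mathrm{id}\circ\otimes=\otimes$, la unicidad en la propiedad universal fuerza $\iota\circ g=\mathrm{id}$, de donde $W=E\otimes F$. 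Todos los pasos siguientes usarán este hecho para reducir igualdades de transformaciones lineales a su comprobación sobre los generadores.

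Para los isomorfismos naturales aplicaría siempre el mismo esquema. El isomorfismo $E\otimes F\cong F\otimes E$ se obtiene induciendo sendas transformaciones lineales a partir de las aplicaciones bilineales $(u,v)\mapsto v\otimes u$ y $(v,u)\mapsto u\otimes v$, y observando que sus composiciones son la identidad sobre los generadores. Para la asociatividad $E\otimes(F\otimes G)\cong(E\otimes F)\otimes G$ hay que aplicar la propiedad universal en dos etapas: para construir $\alpha\colon(E\otimes F)\otimes G\to E\otimes(F\otimes G)$ fijo primero $w\in G$ y, vía la aplicación bilineal $(u,v)\mapsto u\otimes(v\otimes w)$, obtengo una lineal $\mu_w\colon E\otimes F\to E\otimes(F\otimes G)$; luego compruebo que $(z,w)\mapsto\mu_w(z)$ es bilineal en $(E\otimes F)\times G$ —lo que se verifica sobre los generadores $z=u\otimes v$— y una segunda aplicación de la propiedad universal produce $\alpha$ con $\alpha\big((u\otimes v)\otimes w\big)=u\otimes(v\otimes w)$; la flecha inversa $\beta$ se construye simétricamente y $\alpha\beta$, $\beta\alpha$ son la identidad porque lo son sobre los tensores triples.

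Finalmente, para dotar a $E\otimes F$ de estructura de álgebra cuando $E$ y $F$ lo son, lo único no rutinario es la buena definición del producto $(a\otimes b)\cdot(c\otimes d)=ac\otimes bd$. Para $(a,b)$ fijo, $(c,d)\mapsto ac\otimes bd$ es bilineal y da una lineal $L_{a,b}\colon E\otimes F\to E\otimes F$; a su vez $(a,b)\mapsto L_{a,b}$ es bilineal de $E\times F$ en $\mathrm{Hom}_{\mathbb{K}}(E\otimes F,E\otimes F)$, luego induce una lineal $E\otimes F\to\mathrm{Hom}_{\mathbb{K}}(E\otimes F,E\otimes F)$, y el producto queda definido por $x\cdot y:=(\text{imagen de }x)(y)$. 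Restaría chequear la bilinealidad del producto (inmediata por construcción), la asociatividad y que $1_E\otimes 1_F$ es neutro, todo lo cual se reduce a comprobarlo sobre generadores. El obstáculo principal es precisamente esta construcción y la del isomorfismo de asociatividad: ambas tienen el mismo sabor, pues la propiedad universal está enunciada para dos factores y hay que encadenar dos aplicaciones de ella, verificando en el paso intermedio que la familia de transformaciones lineales obtenida depende bilinealmente de los parámetros antes de volver a invocarla.
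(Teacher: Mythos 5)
Tu propuesta es correcta. Conviene señalar que el artículo enuncia esta observación sin demostración alguna, de modo que no hay un argumento propio del texto con el cual contrastar; lo que tú haces es suplir las pruebas estándar, y lo haces con la misma técnica que el artículo emplea en sus demás demostraciones (propiedad universal más el truco de unicidad: dos flechas que factorizan la misma aplicación bilineal coinciden, como en la prueba de existencia y unicidad de $\text{Cl}(\mathbb{V},\Phi)$). Los tres puntos delicados están bien resueltos: la generación por tensores elementales vía la correstricción a $W$ y la unicidad de la identidad; la verificación, sobre generadores, de que la familia $w\mapsto\mu_w$ depende linealmente de $w$ antes de la segunda invocación de la propiedad universal; y la buena definición del producto pasando por $\mathrm{Hom}_{\mathbb{K}}(E\otimes F,E\otimes F)$, que es exactamente lo que hace falta para que $(a\otimes b)\cdot(c\otimes d)=ac\otimes bd$ no dependa de la representación de los factores como sumas de tensores elementales. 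Una alternativa que el propio artículo deja abierta (Observación sobre la existencia de $E\otimes F$ como cociente del módulo libre sobre $E\times F$) sería verificar todo sobre esa construcción concreta, pero tu ruta puramente universal es más limpia y no requiere elegir modelo.
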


\begin{obs}\label{tensorprod-existencia}
Se puede ver que el espacio $E\otimes{F}$ existe para cualesquiera $E$ y $F$ espacios vectoriales sobre un cuerpo $\mathbb{K}$. Para ello basta con tomar al espacio vectorial ($\mathbb{R}$-módulo libre) que tiene por base a $E\times{F}$, por ejemplo el conjunto $\nobreak{\mathbb{R}^{E\times{F}}:=\{f\,:\,{E\times{F}}\rightarrow{\mathbb{R}}\;|\;\text{$f$ es función}\}}$ cumple con esto. Sea $H$ dicho conjunto, definimos $S$ como el subespacio de $H$ generado por los elementos de la base de la forma:
\begin{equation}
\begin{split}
(a,b+b')-&(a,b)-(a,b'),\\
(a+a',b)-&(a,b)-(a',b),\\
(ar,b)&-(a,rb),\\
\end{split}
\end{equation}
con $a,a'\in{E}$, $b,b'\in{F}$ y $r\in{\mathbb{K}}$. Se puede ver que tomando:
\begin{equation}
\begin{split}
E\otimes{F}&:=G/S\\
\otimes\,:\,E\times{F}\rightarrow&{E\otimes{F}}=G/S\\
(a,b)\mapsto{\pi}&(a,b)=(a,b)+S,
\end{split}
\end{equation}
se satisface la propiedad universal.
Un tratamiento más detallado puede encontrarse en \cite{Rotman}.
\end{obs}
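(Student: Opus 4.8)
The plan is to verify both halves of the universal property of Definición~\ref{prodtensor} for the concrete pair $(H/S,\otimes)$. Here I write $H$ for the free $\mathbb{K}$-vector space on the set $E\times F$ (the space $\mathbb{R}^{E\times F}$ above), $S\subseteq H$ for the subspace generated by the three listed families of elements, $\pi\colon H\to H/S$ for the canonical projection, $j\colon E\times F\to H$ for the inclusion sending each pair to the corresponding basis vector, and $\otimes:=\pi\circ j$. The tool I will use repeatedly is the universal property of the free vector space: for every $\mathbb{K}$-vector space $G$ and every map of sets $g\colon E\times F\to G$ there is a unique linear map $\bar g\colon H\to G$ with $\bar g\circ j=g$, obtained by decreeing $\bar g(j(a,b))=g(a,b)$ on the basis and extending linearly.

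First I would show that $\otimes$ is bilinear. Fixing $b\in F$, the elements $(a+a',b)-(a,b)-(a',b)$ and $(\lambda a,b)-\lambda(a,b)$ belong to $S$, so in $H/S$ one gets $\otimes(a+a',b)=\otimes(a,b)+\otimes(a',b)$ and $\otimes(\lambda a,b)=\lambda\,\otimes(a,b)$; the analogous generators in the second slot give linearity in $b$. This is the one step where the precise choice of generators of $S$ matters — the families must be exactly those recording the failure of bilinearity — and it is essentially the only place any care is required. Moreover, since $j(E\times F)$ spans $H$ and $\pi$ is surjective, the image $\otimes(E\times F)$ spans $H/S$; I will use this for uniqueness below.

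Next, given an arbitrary $\mathbb{K}$-vector space $G$ and a bilinear $f\colon E\times F\to G$, I would construct $f_\otimes$. Viewing $f$ as a mere map of sets, the universal property of the free vector space supplies a unique linear $\bar f\colon H\to G$ with $\bar f\circ j=f$. Bilinearity of $f$ forces $\bar f$ to annihilate every generator of $S$: for instance $\bar f\big((a+a',b)-(a,b)-(a',b)\big)=f(a+a',b)-f(a,b)-f(a',b)=0$, and likewise for the other two families, so $S\subseteq\ker\bar f$. By the universal property of the quotient of a vector space by a subspace (the vector-space counterpart of Proposición~\ref{estructuracociente}), $\bar f$ factors uniquely as $\bar f=f_\otimes\circ\pi$ with $f_\otimes\colon H/S\to G$ linear, and then $f_\otimes\circ\otimes=f_\otimes\circ\pi\circ j=\bar f\circ j=f$, which is the required factorization.

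Finally, for uniqueness: if $h\colon H/S\to G$ is linear with $h\circ\otimes=f$, then $h$ and $f_\otimes$ agree on the spanning set $\otimes(E\times F)$, hence $h=f_\otimes$. This proves that $(H/S,\otimes)$ satisfies the universal property, and since that property determines $E\otimes F$ up to isomorphism (the remark after Definición~\ref{prodtensor}), it shows $E\otimes F$ exists for arbitrary $E$ and $F$. The whole argument is formal; there is no genuine obstacle, only the bookkeeping in the second paragraph of matching the generators of $S$ to the bilinearity identities.
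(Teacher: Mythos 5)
Your overall strategy (free object upstairs, kill the relations, factor bilinear maps through the quotient, get uniqueness from the spanning set) is the standard and correct one, and your second and third paragraphs are fine as written. The genuine gap is exactly at the step you single out as "the one place any care is required": you assert that $(\lambda a,b)-\lambda(a,b)$ belongs to $S$. It is not among the listed generators, and over $\mathbb{K}=\mathbb{R}$ it does not lie in their span. The listed families encode only bi-additivity in each slot and the balanced identity $(ar,b)\sim(a,rb)$. A linear functional on $H$ vanishes on $S$ exactly when the underlying function $g\colon E\times F\to\mathbb{K}$ is bi-additive and balanced, and such a $g$ need not be bilinear: for $E=F=\mathbb{K}=\mathbb{R}$ take $g(a,b)=\sigma(ab)$ with $\sigma\colon\mathbb{R}\to\mathbb{R}$ additive (hence $\mathbb{Q}$-linear, e.g.\ built from a Hamel basis) but not $\mathbb{R}$-linear. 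This $g$ kills every generator of $S$, yet $g(\lambda,1)=\sigma(\lambda)\neq\lambda\sigma(1)=\lambda g(1,1)$ in general, so $(\lambda,1)-\lambda(1,1)\notin S$. Consequently the map $\otimes=\pi\circ j$ on $H/S$ is bi-additive and balanced but not homogeneous, i.e.\ not bilinear, and $H/S$ is strictly larger than $E\otimes F$. (Over $\mathbb{Q}$ homogeneity does follow from additivity, which is why the omission is easy to miss.)

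To be fair, the defect is inherited from the statement itself: the three families are Rotman's generators, designed for the free \emph{abelian group} on $E\times F$, where the $\mathbb{K}$-action on the quotient is defined afterwards by $\lambda\cdot\overline{(a,b)}:=\overline{(\lambda a,b)}$; transplanted to the free $\mathbb{K}$-vector space, the quotient inherits a different (formal) scalar action and the homogeneity relations are no longer automatic. The repair within your framework is minimal: enlarge $S$ by the generators $(\lambda a,b)-\lambda(a,b)$ (equivalently, given the balanced relation, by $(a,\lambda b)-\lambda(a,b)$). These are still annihilated by $\bar f$ for any bilinear $f$, so your factorization and uniqueness arguments go through verbatim, and with that change the proof is complete.
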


\begin{obs}\label{pre-tensoralgebra}
Notemos que a partir de esta definición de producto tensorial entre dos espacios vectoriales podemos ``multiplicar'' un espacio vectorial por si mismo e ir obteniendo las sucesivas potencias. Por ejemplo: $\mathbb{V}\otimes\mathbb{V}$\,; $(\mathbb{V}\otimes\mathbb{V})\otimes\mathbb{V}\cong\mathbb{V}\otimes(\mathbb{V}\otimes\mathbb{V})$, etcétera. Definimos $\mathbb{V}^{\otimes^{i}}:=\underbrace{\mathbb{V}\otimes{...}\otimes\mathbb{V}}_{\text{i-veces}}$ para $i\geq1$ y además $\mathbb{V}^{\otimes^{0}}:=\mathbb{K}$. Cuando tomamos la suma directa de todas las potencias de $\mathbb{V}$ se obtiene un conjunto muy particular. Existe en él una estructura de álgebra y se puede ver como el álgebra ``más grande'' que contiene a $\mathbb{V}$. Esto se expresa en la siguiente proposición.
\end{obs}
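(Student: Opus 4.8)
La proposición a la que alude la observación anterior establece (salvo detalles de redacción) que $T(\mathbb{V}):=\bigoplus_{i\geq0}\mathbb{V}^{\otimes^{i}}$ admite una estructura natural de $\mathbb{K}$-álgebra asociativa con unidad y que, junto con la inclusión $\iota{:}\,\mathbb{V}\hookrightarrow{T(\mathbb{V})}$ de $\mathbb{V}$ como la componente $i=1$, satisface el siguiente problema universal: para toda $\mathbb{K}$-álgebra $A$ y toda transformación lineal $f{:}\,\mathbb{V}\rightarrow{A}$ existe un único morfismo de $\mathbb{K}$-álgebras $\overline{f}{:}\,T(\mathbb{V})\rightarrow{A}$ tal que $\overline{f}\circ\iota=f$. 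El plan consiste en: (i) construir el producto y verificar los axiomas de álgebra; (ii) construir $\overline{f}$ componente a componente; (iii) probar que es morfismo de álgebras y que es único.

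Para (i), definiría primero el producto sobre sumandos homogéneos usando los isomorfismos naturales de asociatividad de la Observación \ref{tensorprod-propiedades}: estos dan, para cada $i,j\geq0$, una aplicación bilineal $\mu_{i,j}{:}\,\mathbb{V}^{\otimes^{i}}\times\mathbb{V}^{\otimes^{j}}\rightarrow\mathbb{V}^{\otimes^{i+j}}$ que sobre generadores actúa por yuxtaposición, $(v_1\otimes\cdots\otimes v_i\,,\,w_1\otimes\cdots\otimes w_j)\mapsto v_1\otimes\cdots\otimes v_i\otimes w_1\otimes\cdots\otimes w_j$ (con los convenios $\mathbb{V}^{\otimes^{0}}=\mathbb{K}$ y la normalización $\mathbb{K}\otimes{W}\cong{W}$). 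Luego extendería por bilinealidad a toda la suma directa, tomaría $1_{T(\mathbb{V})}:=1\in\mathbb{K}=\mathbb{V}^{\otimes^{0}}$ como unidad, y comprobaría los axiomas sobre generadores homogéneos: la neutralidad de $1$ sale de la normalización $\mathbb{K}\otimes{W}\cong{W}$ y la asociatividad se reduce a la compatibilidad de los isomorfismos de asociatividad, pues tanto $\mu_{i+j,k}\circ(\mu_{i,j}\otimes\mathrm{id})$ como $\mu_{i,j+k}\circ(\mathrm{id}\otimes\mu_{j,k})$ envían el generador correspondiente a $v_1\otimes\cdots\otimes v_{i+j+k}$. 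La bilinealidad del producto en $T(\mathbb{V})$ es automática por construcción.

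Para (ii) y (iii): dada $f{:}\,\mathbb{V}\rightarrow{A}$ lineal, observaría que para cada $i\geq1$ la aplicación $\mathbb{V}^{i}\rightarrow{A}$ dada por $(v_1,\dots,v_i)\mapsto f(v_1)\cdots f(v_i)$ es $i$-lineal (por ser el producto de $A$ asociativo y bilineal), de modo que aplicando $i-1$ veces la propiedad universal del producto tensorial factoriza de manera única a través de una transformación lineal $\overline{f}_i{:}\,\mathbb{V}^{\otimes^{i}}\rightarrow{A}$; definiría $\overline{f}_0{:}\,\lambda\mapsto\lambda\,1_A$ y $\overline{f}:=\bigoplus_{i\geq0}\overline{f}_i$, que es lineal y cumple $\overline{f}\circ\iota=\overline{f}_1=f$. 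Que $\overline{f}$ preserva el producto lo chequearía sobre generadores homogéneos —ahí ambos miembros dan $f(v_1)\cdots f(v_i)f(w_1)\cdots f(w_j)$— extendiéndolo por bilinealidad; junto con $\overline{f}(1_{T(\mathbb{V})})=1_A$ esto hace de $\overline{f}$ un morfismo de álgebras. La unicidad sigue de que $\mathbb{V}$ (la imagen de $\iota$) genera $T(\mathbb{V})$ como álgebra: cualquier morfismo de álgebras $g$ con $g\circ\iota=f$ satisface $g(v_1\otimes\cdots\otimes v_i)=f(v_1)\cdots f(v_i)$ y $g(\lambda)=\lambda\,1_A$, luego coincide con $\overline{f}$ en un sistema de generadores y por tanto en todo $T(\mathbb{V})$.

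Espero que el principal obstáculo no sea ningún cálculo —una vez planteados son rutinarios— sino la \emph{buena definición}: tanto del producto como de cada $\overline{f}_i$ hay que justificar con cuidado la factorización iterada a través de productos tensoriales binarios, lo que obliga a fijar una parentización de $\mathbb{V}^{\otimes^{i}}$ y a emplear de forma consistente los isomorfismos naturales de asociatividad y de normalización $\mathbb{K}\otimes{W}\cong{W}$; de hecho la asociatividad del producto de $T(\mathbb{V})$ descansa precisamente en la coherencia de esos isomorfismos. Para no oscurecer el argumento, trabajaría sistemáticamente con generadores homogéneos $v_1\otimes\cdots\otimes v_i$ (donde todas las igualdades son transparentes) o, alternativamente, adoptaría una construcción explícita de $\mathbb{V}^{\otimes^{i}}$ al estilo de la Observación \ref{tensorprod-existencia} y probaría $\mathbb{V}^{\otimes^{i}}\otimes\mathbb{V}^{\otimes^{j}}\cong\mathbb{V}^{\otimes^{i+j}}$ de una vez, por inducción en $i$.
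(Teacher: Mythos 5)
Tu propuesta es correcta y sigue esencialmente la misma construcción que el texto: tomar $T(\mathbb{V})=\bigoplus_{i\geq0}\mathbb{V}^{\otimes^{i}}$, definir el producto por yuxtaposición sobre generadores homogéneos y extender por bilinealidad, con $1\in\mathbb{K}=\mathbb{V}^{\otimes^{0}}$ como unidad. De hecho vas más lejos que la prueba del texto, que se limita a la construcción del álgebra y omite por completo la verificación de la propiedad universal (la construcción de $\overline{f}$ componente a componente vía multilinealidad, la comprobación de que es morfismo y la unicidad por generación), así como la discusión sobre la buena definición del producto a través de los isomorfismos de asociatividad; todos esos pasos que añades son correctos y necesarios para una demostración completa.
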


\begin{prop}\label{tensoralgebra}
Dado un $\mathbb{K}$-espacio vectorial $\mathbb{V}$ existe un álgebra especial $T(\mathbb{V})$ junto con una transformación lineal $i\,:\,\mathbb{V}\rightarrow{T(\mathbb{V })}$ que satisface la siguiente propiedad universal:
Dada cualquier $\mathbb{K}$-álgebra $A$ y una transformación lineal $f\,:\,\mathbb{V}\rightarrow{A}$ existe un único morfismo de álgebras $\bar{f}\,;\,{T(\mathbb{V})}\rightarrow{A}$ tal que $f=\bar{f}\circ{i}$. El siguiente diagrama conmuta:
\[
\begin{tikzcd}[sep=5em]
\mathbb{V} \arrow{r}{i} \arrow{rd}{f} 
  & T(\mathbb{V}) \arrow[dashrightarrow]{d}{\exists{!}\;\bar{f}} \\
    & A
\end{tikzcd}
\]
$T(V)$ se llama el \emph{álgebra tensorial de $\mathbb{V}$}.
\end{prop}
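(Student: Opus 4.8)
The natural strategy is the standard explicit construction of the tensor algebra as the direct sum of all tensor powers, followed by a verification of the universal property. First I would define the underlying vector space
\[
T(\mathbb{V}) := \bigoplus_{i=0}^{\infty} \mathbb{V}^{\otimes^{i}} = \mathbb{K} \oplus \mathbb{V} \oplus (\mathbb{V}\otimes\mathbb{V}) \oplus \cdots,
\]
using the powers $\mathbb{V}^{\otimes^{i}}$ introduced in Observación \ref{pre-tensoralgebra}, and take $i\colon \mathbb{V}\to T(\mathbb{V})$ to be the canonical inclusion of the degree-one summand. The next step is to equip $T(\mathbb{V})$ with a product: on homogeneous generators $x_1\otimes\cdots\otimes x_p \in \mathbb{V}^{\otimes^{p}}$ and $y_1\otimes\cdots\otimes y_q \in \mathbb{V}^{\otimes^{q}}$, define the product to be concatenation $x_1\otimes\cdots\otimes x_p\otimes y_1\otimes\cdots\otimes y_q \in \mathbb{V}^{\otimes^{p+q}}$, and extend bilinearly. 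Here I would invoke the universal property of the tensor product (Definición \ref{prodtensor}) together with the associativity isomorphisms from Observación \ref{tensorprod-propiedades} to see that the multiplication maps $\mathbb{V}^{\otimes^{p}}\times\mathbb{V}^{\otimes^{q}}\to\mathbb{V}^{\otimes^{p+q}}$ are well defined and that the resulting global product is associative; the unit is $1_{\mathbb{K}}\in\mathbb{V}^{\otimes^{0}}$. This makes $T(\mathbb{V})$ a $\mathbb{K}$-algebra in the sense of Definición \ref{algebra-sobre-cuerpo}.

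Having the algebra in hand, I would prove the universal property. Given a $\mathbb{K}$-algebra $A$ and a linear map $f\colon\mathbb{V}\to A$, the only possible candidate for $\bar f$ is forced: on the degree-one part it must agree with $f$, on the degree-zero part it must send $1_{\mathbb{K}}$ to $1_{A}$ (hence $\lambda\mapsto\lambda 1_{A}$ by linearity), and since $\bar f$ must be multiplicative we are forced to set
\[
\bar f(x_1\otimes\cdots\otimes x_p) = f(x_1)\,f(x_2)\cdots f(x_p)
\]
on homogeneous generators, extended by linearity across the direct sum. This proves uniqueness immediately. For existence one must check that this prescription actually yields a well-defined linear map on each $\mathbb{V}^{\otimes^{p}}$: the map $(x_1,\dots,x_p)\mapsto f(x_1)\cdots f(x_p)$ is $\mathbb{K}$-multilinear into $A$, so by iterated application of the universal property of $\otimes$ it factors through a unique linear map $\mathbb{V}^{\otimes^{p}}\to A$. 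Then one verifies $\bar f$ is an algebra morphism: multiplicativity holds on homogeneous generators by the definition of the concatenation product and the associativity of multiplication in $A$, it holds on all elements by bilinearity of both products, and $\bar f(1_{\mathbb{K}}) = 1_{A}$ by construction. Finally $f = \bar f\circ i$ is immediate since $i$ lands in degree one where $\bar f$ restricts to $f$.

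Alternatively — and perhaps more in keeping with the abstract flavor of the surrounding text — one could prove existence purely formally: abstract nonsense shows that an object satisfying a universal property is unique up to unique isomorphism (this is already noted after Definición \ref{prodtensor}), so only existence needs an argument, and the explicit construction above supplies it. I would present the explicit construction as the main content.

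The step I expect to require the most care is verifying that the concatenation product is \emph{well defined} and associative: one is really asserting the existence of coherent natural isomorphisms $\mathbb{V}^{\otimes^{p}}\otimes\mathbb{V}^{\otimes^{q}}\cong\mathbb{V}^{\otimes^{p+q}}$ compatible with one another, which rests on the associativity isomorphism of $\otimes$ being itself coherent (a MacLane-type pentagon condition). Since the excerpt only states the associativity isomorphism $E\otimes(F\otimes G)\cong (E\otimes F)\otimes G$ as a ``natural isomorphism'' in Observación \ref{tensorprod-propiedades} without proving coherence, I would either cite \cite{Rotman} for this point or sidestep it by constructing $\mathbb{V}^{\otimes^{p}}$ from the outset as a quotient of the free vector space on $p$-tuples (so that concatenation of tuples is literally associative on the nose), which makes the multiplication manifestly associative and the well-definedness a routine check on the defining relations. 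Everything else — linearity, the unit axiom, uniqueness of $\bar f$ — is formal.
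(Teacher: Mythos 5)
Tu construcción es exactamente la del texto: $T(\mathbb{V})=\bigoplus_{i\geq 0}\mathbb{V}^{\otimes^{i}}$ con el producto de concatenación y la inclusión canónica del sumando de grado uno, así que el enfoque coincide. De hecho, la prueba del documento se detiene tras definir el producto y nunca verifica la propiedad universal, de modo que tu argumento de unicidad (forzada por la multiplicatividad sobre generadores homogéneos) y de existencia de $\bar{f}$ (vía la multilinealidad de $(x_1,\dots,x_p)\mapsto f(x_1)\cdots f(x_p)$ y la propiedad universal de $\otimes$), junto con tu observación sobre la coherencia de los isomorfismos de asociatividad, completa correctamente lo que aquí quedó implícito.
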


\begin{proof}
Basta con tomar
\begin{equation}
T(V)=\bigoplus\limits_{i\geq{0}}\mathbb{V}^{\otimes^{i}}=\mathbb{K}\oplus\mathbb{V}\oplus\mathbb{V}\otimes\mathbb{V}\oplus...,
\end{equation}
esto es, un $t$ en ${T}(\mathbb{V})$ será de la forma $t=t_0+t_1+...+t_k=\sum_{i=1}^k{t_i}$ con $t_i\in\mathbb{V}^{\otimes^{i}}$ y con k finito.

Como la proposición dice que $T(\mathbb{V})$ es un álgebra debemos definir en ella un producto.
Dados $u_1\otimes{...}\otimes{u_k}\in\mathbb{V}^{\otimes^k}\quad$ y $\quad{v}_1\otimes{...}\otimes{v_l}\in\mathbb{V}^{\otimes^l}$, definimos:
\begin{equation}
(u_1\otimes{...}\otimes{u_k})\cdot(v_1\otimes{...}\otimes{v_l})=u_1\otimes{...}\otimes{u_k}\otimes{v_1}\otimes{...}\otimes{v_l}\;\in\mathbb{V}^{\otimes^{k+l}} , 
\end{equation}
y por bilinealidad de $\otimes$ se generaliza a cualquier elemento en la suma directa $T(\mathbb{V})$.

Notemos que hay inclusiones naturales $i_{k}\,:\mathbb{V}^{\otimes^k}\,\hookrightarrow{T(\mathbb{V})}$. En particular $i_0\,:\,\mathbb{K}\hookrightarrow{T(\mathbb{V})}$ cumple $i_0(1)=1_{T(\mathbb{V})}$
\end{proof}

\begin{obs}\label{tensoralgebra-propiedades}
Se tiene que:
\begin{itemize}
\item El álgebra $(T(\mathbb{V}),\cdot)$ es no conmutativa.
\item Muchas álgebras relacionadas con $\mathbb{V}$ se definen a partir de cocientes de $T(\mathbb{V})$ con ideales. Por ejemplo, el \emph{álgebra exterior $\bigwedge(\mathbb{V})$} se define por $\bigwedge(\mathbb{V})=T(\mathbb{V})/\langle{v}\otimes{v}\rangle$, donde $\langle{v}\otimes{v}\rangle$ es el ideal bilátero generado por elementos de la forma $v\otimes{v}$. Otro caso es el \emph{álgebra simétrica} $\text{Sym}(\mathbb{V})=T(\mathbb{V})/\langle{v}\otimes{w}-w\otimes{v}\rangle$.
\end{itemize}
\end{obs}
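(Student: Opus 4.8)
The plan is to treat the two assertions separately, since they are logically independent. For the non-commutativity of $(T(\mathbb{V}),\cdot)$ I would first record the caveat that the claim holds precisely when $\dim\mathbb{V}\geq 2$ (for $\dim\mathbb{V}\leq 1$ the algebra is $\mathbb{K}$ or isomorphic to $\mathbb{K}[x]$, both commutative). Assuming $\dim\mathbb{V}\geq 2$, I would fix two linearly independent vectors, say the first two elements $e_1,e_2$ of a basis of $\mathbb{V}$, and exhibit the failure of commutativity directly from the product defined in Proposition \ref{tensoralgebra}: since $e_1\cdot e_2=e_1\otimes e_2$ and $e_2\cdot e_1=e_2\otimes e_1$ both lie in $\mathbb{V}^{\otimes 2}$, it suffices to show $e_1\otimes e_2\neq e_2\otimes e_1$ in $\mathbb{V}\otimes\mathbb{V}$.

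The key step, and the only place requiring genuine work, is justifying that these two simple tensors are distinct. The cleanest route is to use the universal property of the tensor product (Definition \ref{prodtensor}) to separate them: writing $\{e_i^\ast\}$ for the dual basis, the map $f\colon\mathbb{V}\times\mathbb{V}\to\mathbb{K}$ given by $f(u,v)=e_1^\ast(u)\,e_2^\ast(v)$ is bilinear, so it factors as $f=f_\otimes\circ\otimes$ for a unique linear $f_\otimes\colon\mathbb{V}\otimes\mathbb{V}\to\mathbb{K}$. Evaluating, $f_\otimes(e_1\otimes e_2)=1$ while $f_\otimes(e_2\otimes e_1)=0$, so the two tensors cannot coincide; hence $e_1\cdot e_2\neq e_2\cdot e_1$ and $T(\mathbb{V})$ is non-commutative. (Equivalently one could invoke the standard fact that $\{e_{i_1}\otimes\cdots\otimes e_{i_k}\}$ is a basis of $\mathbb{V}^{\otimes k}$, but the functional argument is lighter since it avoids building the whole basis.)

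For the second assertion, essentially nothing beyond the earlier results is needed, and this is the part I expect to be routine. By Definition \ref{ideal}, the ideal $\langle X\rangle$ generated by any subset $X\subseteq T(\mathbb{V})$ is automatically a two-sided ideal. Taking $X=\{v\otimes v\mid v\in\mathbb{V}\}$ produces a two-sided ideal, and Proposition \ref{estructuracociente} then endows the quotient $T(\mathbb{V})/\langle v\otimes v\rangle$ with the structure of a $\mathbb{K}$-algebra, which is by definition the exterior algebra $\bigwedge(\mathbb{V})$. The identical argument with $X=\{v\otimes w-w\otimes v\mid v,w\in\mathbb{V}\}$ yields the symmetric algebra $\text{Sym}(\mathbb{V})$. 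Thus both constructions are well-defined as quotient algebras, and the observation amounts to recording them as instances of the general quotient mechanism already established; the substantive content of the statement lies entirely in the non-commutativity claim and its separation argument above.
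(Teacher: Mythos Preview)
The paper does not provide a proof of this observation; it is stated as a remark and left unproved, with the text moving immediately to the next subsection. Your argument is correct and in fact supplies more than the paper does: you rightly flag that the non-commutativity claim requires $\dim\mathbb{V}\geq 2$ (a caveat the paper omits, making its statement false as written for $\dim\mathbb{V}\leq 1$), and your separation of $e_1\otimes e_2$ from $e_2\otimes e_1$ via the bilinear functional $f(u,v)=e_1^\ast(u)\,e_2^\ast(v)$ and the universal property is a clean, self-contained justification. The second part is, as you say, a direct application of Definition~\ref{ideal} and Proposition~\ref{estructuracociente}, and the paper likewise treats it as definitional rather than something to be argued.
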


\subsection{Representaciones lineales de álgebras y grupos}

En esta sección definiremos el concepto de representación o acción de un grupo sobre un espacio vectorial. Estos conceptos nos serán útiles más adelante, en dos contextos. Por un lado, la acción lineal de un grupo será utilizada en la introducción de los grupos Pin y Spin y por otro, se empleará una representación particular del álgebra de Clifford a la hora de definir los espinores algebraicos.  

\begin{defn}\label{accionlinealgrupo}
Dado un grupo $G$ y un espacio vectorial $\mathbb{V}$, una \emph{acción lineal $\rho$ de $G$ sobre $\mathbb{V}$} es una aplicación:
\begin{equation}
\rho:G\longrightarrow\text{Aut}{\mathbb{V}},
\end{equation}
que satisface
\begin{enumerate}
\item $\rho(gh)=\rho(g)\circ\rho(h)\quad\forall{g,h}\in{G}$
\item $\rho(e)=id_{\mathbb{V}}\quad\text{donde $e$ es el neutro del grupo $G$}$
\end{enumerate}
\end{defn}
\begin{obs}\label{inversoaccion}
Notemos que esta definición implica:
\begin{itemize}
\item Dado $g$ en el grupo, $\rho(g)$ es lineal e inversible puesto que $\rho{(g)}\in{\text{Aut}{\mathbb{V}}}\;\forall{g}\in{G}$
\item $\rho(g^{-1})=\rho(g)^{-1}$
\end{itemize}
\end{obs}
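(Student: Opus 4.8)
Both assertions follow immediately from unwinding Definition \ref{accionlinealgrupo}, so the plan is essentially bookkeeping with no genuine obstacle. For the first bullet, I would simply recall that by definition $\rho$ takes values in $\text{Aut}(\mathbb{V})$, and $\text{Aut}(\mathbb{V})$ was defined (in Section \ref{espacioscuad}, just before Definition \ref{isometria}) as the set of linear isomorphisms of $\mathbb{V}$ onto itself. Hence for every $g \in G$ the map $\rho(g)$ is, tautologically, a linear bijection; in particular it is invertible as a linear map. There is nothing to compute here.

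For the second bullet I would use properties (1) and (2) of the action. Given $g \in G$, apply (1) with $h = g^{-1}$ and then (2):
\[
\rho(g)\circ\rho(g^{-1}) = \rho(g\,g^{-1}) = \rho(e) = \mathrm{id}_{\mathbb{V}}.
\]
Symmetrically, applying (1) with the factors in the other order,
\[
\rho(g^{-1})\circ\rho(g) = \rho(g^{-1} g) = \rho(e) = \mathrm{id}_{\mathbb{V}}.
\]
Thus $\rho(g^{-1})$ is a two-sided inverse of $\rho(g)$ in the monoid of linear endomorphisms of $\mathbb{V}$. Since inverses in a monoid are unique when they exist, and we already know from the first bullet that $\rho(g)$ is invertible, I conclude $\rho(g^{-1}) = \rho(g)^{-1}$.

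\textbf{Main difficulty.} Honestly, there is none: the statement is a formal consequence of the axioms of a linear action, exactly as the analogous fact holds for any group homomorphism. The only point worth a word of care is that one must verify the inverse relation on \emph{both} sides ($\rho(g)\circ\rho(g^{-1})$ and $\rho(g^{-1})\circ\rho(g)$) rather than just one, since we are working in the (noncommutative) monoid $\mathrm{End}(\mathbb{V})$ under composition; but this is handled by the symmetric computation above.
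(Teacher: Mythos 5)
Tu prueba es correcta y sigue exactamente el razonamiento estándar que el texto da por sobreentendido: la observación se enuncia sin demostración precisamente porque es consecuencia formal inmediata de la Definición \ref{accionlinealgrupo}, y tu verificación (codominio $\text{Aut}(\mathbb{V})$ para el primer punto; composición con $\rho(g g^{-1})=\rho(e)=\mathrm{id}_{\mathbb{V}}$ por ambos lados para el segundo) es precisamente el argumento omitido. El cuidado que señalás de comprobar la inversa por los dos lados es pertinente y no introduce ninguna desviación respecto de lo que el texto asume.
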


En lo que queda de esta sección se sigue principalmente el capítulo 4 de \cite{VazdaRocha}. Se define representación irreducible de una $\mathbb{R}$-álgebra y se introduce especialmente la representación regular.  

\begin{defn}\label{replinealalgebra}
Sea $A$ un álgebra real y $\mathbb{V}$ un espacio vectorial sobre $\mathbb{K}=\mathbb{R}$ ó $\mathbb{C}$, una aplicación:
\begin{equation}
\rho:A\longrightarrow\text{End}_{\mathbb{K}}{\mathbb{V}},
\end{equation}
que satisface
\begin{enumerate}
\item $\rho(ab)=\rho(a)\circ\rho(b)\quad\forall{a,b}\in{A}$
\item $\rho(1_{A})=id_{\mathbb{V}}\quad\text{donde $1_{A}$ es la unidad del álgebra $A$}$,
\end{enumerate}
 se denomina una \emph{$\mathbb{K}$-representación lineal de $A$} y el espacio $\mathbb{V}$ se llama el \emph{espacio de la representación}. En alguna bibliografía suele llamarse representación al par $(\rho,\mathbb{V})$.
 
 Dos representaciones $\rho_{1}:A\rightarrow\text{End}_{\mathbb{K}}(\mathbb{V}_1)$ y $\rho_{2}:A\rightarrow\text{End}_{\mathbb{K}}(\mathbb{V}_2)$ se dicen equivalentes si existe un $\mathbb{K}$-isomorfismo $\phi:\mathbb{V}_{1}\rightarrow\mathbb{V}_{2}$, tal que $\rho_{2}(a)=\phi\circ\rho_{1}(a)\circ\phi^{-1}$  $\forall{a}\in{A}$.
 
Una representación se dice \emph{fiel} si $\ker{\rho}=\{0\}$ y se dice \emph{irreducible} si se cumple que los únicos subespacios de $\mathbb{V}$ invariantes frente a todos los endomorfismos $\rho(a)$ (o sea $a$ cualquier elemento en $A$) son $\mathbb{V}$ y $\{0\}$. Puesto en símbolos sería que se cumple la siguiente implicación: $\rho(a)(U)\subset{U}$  $\forall{a}\in{A}\ \implies \ U=\mathbb{V} \text{ ó } U=\{0\}$.

Una representación se dice \emph{semisimple} si se cumple que $\mathbb{V}=\mathbb{V}_1\oplus...\oplus\mathbb{V}_m$, con $\mathbb{V}_1,...,\mathbb{V}_m$ subespacios de $\mathbb{V}$ invariantes de $\rho(a)$ para cualquier $a\in{A}$.

\end{defn}

\newpage
\section{Álgebras de Clifford y grupos asociados}
 
Este capítulo estará dedicado al estudio de las álgebra de Clifford de un espacio cuadrático, los grupos que se desprenden de ellas y su relación con los grupos de isometrías del espacio cuadrático.

\subsection{Motivación. Rotaciones en $\mathbb{R}^3$ y cuaternios unitarios}\label{motivacion}

El álgebra de Clifford de un espacio cuadrático es un objeto universal que depende de la forma cuadrática definida en él. Un caso central que motivó el desarrollo de la teoría fue el caso del espacio tridimensional euclídeo. Las isometrías asociadas a la forma cuadrática euclídea son las rotaciones que dejan fijo el origen. Existe una relación estrecha entre este grupo de isometrías y el grupo de cuaternios unitarios.

Si recordamos la definición del álgebra de cuaternios $\mathbb{H}$ dada en el ejemplo \ref{ej-algebras-sobre-cuerpos}, tenemos que un elemento $x\in\mathbb{H}$ se escribe:
\begin{equation}
x=\begin{pmatrix}
	a+ib\quad \ c+id\\
	-c+id \quad \ a-ib\\
    \end{pmatrix}
    =\begin{pmatrix}
	z\quad \ \omega\\
	-\bar{\omega} \quad \bar{z}\\
    \end{pmatrix},
\quad z,\omega\in\mathbb{C},
\end{equation}
donde la barra superior indica conjugación compleja. Si calculamos el determinante de una matriz de este estilo obtenemos:
\begin{equation}
\det(x)=a^2+b^2+c^2+d^2.
\end{equation}
Definimos a los \emph{cuaternios unitarios} y los denotamos por $\mathbb{H}_{1}$ a los $x\in\mathbb{H}$ tales que $\det(x)=a^2+b^2+c^2+d^2=1$.

Podemos además definir una operación adicional en $\mathbb{H}$, la \emph{conjugación}. Dado $x=a\bold{1}+b\bold{i}+c\bold{j}+d\bold{k}$ con $a,b,c,d\in{\mathbb{R}}$ se define el conjugado de $x$, $\bar{x}$ por:
\begin{equation}
\bar{x}=a\bold{1}-b\bold{i}-c\bold{j}-d\bold{k},
\end{equation}
de este modo $\det(x)=x\bar{x}=a^2+b^2+c^2+d^2$.
Se puede ver que el conjunto $\mathbb{H}_{1}$ es un grupo y como tal es isomorfo al grupo $\text{SU}(2)$ de matrices unitarias complejas de $2\times2$ con determinante positivo.
Podemos definir una acción de $\text{SU}(2)\cong\mathbb{H}_{1}$ sobre $\mathbb{R}^3$. Para ello identificamos un elemento $x=(x_1,x_2,x_3)$ en $\mathbb{R}^3$ con un $X$ en $\mathbb{H}$, $X=x_1\bold{i}+x_2\bold{j}+x_3\bold{k}$.
Dado $Z\in\text{SU}(2)\cong\mathbb{H}_{1}$, definimos la acción de $Z$ sobre un punto de $\mathbb{R}^3$ como:
\begin{equation}\label{adjsu2}
\rho_{Z}(x)=Z\cdot{X}\cdot{Z^{-1}}=Z\cdot{X}\cdot\bar{Z}.
\end{equation}
Se puede demostrar (y lo haremos más adelante) que dado $Z\in\text{SU}(2)$, $\rho_{Z}$ es una isometría lineal de $\mathbb{R}^3$, es decir, una rotación. Además, para toda rotación $\Omega$ existe un elemento $Z\in\text{SU}(2)$ tal que $\rho_{Z}=\Omega$.
Más aún, $\rho$ es un homomorfismo suryectivo de grupos $\rho\,:\,\text{SU}(2)\rightarrow\text{SO}(3)$ cuyo núcleo es el conjunto $\{\bold{-1},\bold{1}\}\cong\mathbb{Z}_{2}$.
Notemos que por el teorema del isomorfismo para grupos tenemos que $\text{SO}(3)\cong\text{SU}(2)/\mathbb{Z}_{2}$. Dado $\Omega\in\text{SO}(3)$, existen dos representantes de $\Omega$ en $\text{SU}(2)$, $Z$ y $-Z$. La acción que tiene la forma \ref{adjsu2} se denomina \emph{acción adjunta del grupo $\mathbb{H}_1$}.

Las álgebras de Clifford y los grupos asociados nos permiten generalizar este procedimiento a cualquier espacio cuadrático regular. Esto es, dado un espacio cuadrático regular obtenemos un grupo que se relacione con el grupo especial ortogonal de dicho espacio de la misma manera en que $\text{SU}(2)$ se relaciona con $\text{SO}(3)$.

\subsection{Álgebra de Clifford de un espacio cuadrático real}

\begin{defn}\label{algebradeclifford}
Sea $(\mathbb{V},\Phi)$ un espacio cuadrático real con forma cuadrática $\Phi$ y forma bilineal asociada $\varphi$, un \emph{álgebra de Clifford asociada a $(\mathbb{V},\Phi)$} es un álgebra real $\text{Cl}(\mathbb{V},\Phi)$ junto con una transformación lineal $i_{\Phi}\,:\,\mathbb{V}\rightarrow\text{Cl}(\mathbb{V},\Phi)$ que satisface $\nobreak{(i(v))^2=\Phi(v)\bf{1_{\text{Cl}(\mathbb{V},\Phi)}}\;\forall{v\in{\mathbb{V}}}}$, donde $\bf{1_{\text{Cl}(\mathbb{V},\Phi)}}$ es la identidad del álgebra $\text{Cl}(\mathbb{V},\Phi)$ y tal que para toda álgebra real $A$ y toda transformación lineal $f\,:\,{V}\rightarrow{A}$ que satisface $(f(v))^2=\Phi(v)\bf{1_{\text{Cl}(\mathbb{V},\Phi)}}\;\forall{v\in{\mathbb{V}}}$ existe un único morfismo de álgebras $\bar{f}\,:\,\text{Cl}(\mathbb{V},\Phi)\rightarrow{A}$ tal que $f=\bar{f}\circ{i_\Phi}$. Es decir se cumple el siguiente diagrama:
\[
\begin{tikzcd}[sep=5em]
\mathbb{V} \arrow{r}{i_{\Phi}} \arrow{rd}{f} 
  & \text{Cl}(\mathbb{V},\Phi) \arrow[dashrightarrow]{d}{\exists{!}\;\bar{f}} \\
    & A
\end{tikzcd}
\]
\end{defn}
\begin{obs}\label{algebradeclifford-notacion}
De la definición se deduce que $\text{Cl}(\mathbb{V},\Phi)$ es única a menos de isomorfismos. Para simplificar la escritura suele usarse la notación $\text{Cl}(\Phi)$ o $\text{Cl}$ para $\text{Cl}(\mathbb{V},\Phi)$ e $i$ para $i_\Phi$. En lo sucesivo, siempre que no sea confuso, se utilizará $\bf{1}$ para referirse a la identidad en $\text{Cl}(\mathbb{V},\Phi)$.  El tipo de funciones que cumplen $f(v)^2=\Phi(v)\bf{1}$ se denominan \emph{funciones de Clifford}.
\end{obs}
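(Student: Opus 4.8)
El plan es establecer la unicidad a menos de isomorfismos mediante el argumento estándar para objetos definidos por una propiedad universal, sin recurrir a ninguna construcción explícita. Supongamos dados dos pares $(\mathrm{Cl}_1,i_1)$ y $(\mathrm{Cl}_2,i_2)$, ambos álgebras de Clifford asociadas a $(\mathbb{V},\Phi)$ en el sentido de la Definición \ref{algebradeclifford}. El punto de partida es notar que cada una de las transformaciones estructurales es, por hipótesis, una función de Clifford: $i_2$ cumple $(i_2(v))^2=\Phi(v)\mathbf{1}_{\mathrm{Cl}_2}$ e $i_1$ cumple $(i_1(v))^2=\Phi(v)\mathbf{1}_{\mathrm{Cl}_1}$. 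Aplicando la propiedad universal de $\mathrm{Cl}_1$ a la función de Clifford $i_2\colon\mathbb{V}\to\mathrm{Cl}_2$ se obtiene un único morfismo de álgebras $\overline{i_2}\colon\mathrm{Cl}_1\to\mathrm{Cl}_2$ con $i_2=\overline{i_2}\circ i_1$; simétricamente, la propiedad universal de $\mathrm{Cl}_2$ aplicada a $i_1\colon\mathbb{V}\to\mathrm{Cl}_1$ da un único morfismo $\overline{i_1}\colon\mathrm{Cl}_2\to\mathrm{Cl}_1$ con $i_1=\overline{i_1}\circ i_2$.

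El segundo paso es probar que $\overline{i_1}$ y $\overline{i_2}$ son mutuamente inversos. La composición $\overline{i_1}\circ\overline{i_2}\colon\mathrm{Cl}_1\to\mathrm{Cl}_1$ es un morfismo de álgebras y satisface $(\overline{i_1}\circ\overline{i_2})\circ i_1=\overline{i_1}\circ(\overline{i_2}\circ i_1)=\overline{i_1}\circ i_2=i_1$, es decir, factoriza a $i_1$ a través de $i_1$; pero $\mathrm{id}_{\mathrm{Cl}_1}$ también lo hace. Ahora se invoca la cláusula de unicidad de la propiedad universal de $\mathrm{Cl}_1$, esta vez aplicada a la función de Clifford $i_1$ vista con codominio el propio $\mathrm{Cl}_1$: existe un único morfismo de álgebras $\mathrm{Cl}_1\to\mathrm{Cl}_1$ que compuesto con $i_1$ devuelve $i_1$, de donde $\overline{i_1}\circ\overline{i_2}=\mathrm{id}_{\mathrm{Cl}_1}$. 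Intercambiando los papeles de $\mathrm{Cl}_1$ y $\mathrm{Cl}_2$ se obtiene $\overline{i_2}\circ\overline{i_1}=\mathrm{id}_{\mathrm{Cl}_2}$. Por lo tanto $\overline{i_2}$ es un isomorfismo de álgebras con inverso $\overline{i_1}$, compatible además con las transformaciones estructurales en el sentido de que $\overline{i_2}\circ i_1=i_2$; esto es precisamente lo que significa que $\mathrm{Cl}(\mathbb{V},\Phi)$ sea ``única a menos de isomorfismos''.

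No anticipo ningún obstáculo serio: todo el argumento consiste en seguir flechas en diagramas, sin cálculos. El único punto delicado —y el que conviene remarcar— es que la parte de unicidad de la propiedad universal debe usarse tomando como función de Clifford a $i_1$ (resp.\ $i_2$) con codominio el álgebra misma; sin esa observación el razonamiento no cierra, pues no hay otra manera de comparar $\overline{i_1}\circ\overline{i_2}$ con la identidad. Por completitud, y dado que el enunciado sería vacuo si no existiera tal álgebra, convendría indicar por separado que existe al menos una: el cociente $T(\mathbb{V})/\langle v\otimes v-\Phi(v)\mathbf{1}\rangle$ del álgebra tensorial (Proposición \ref{tensoralgebra}) por el ideal bilátero generado por los elementos $v\otimes v-\Phi(v)\mathbf{1}$, junto con el mapa estructural inducido por la inclusión de $\mathbb{V}$, satisface la propiedad requerida —la verificación es análoga a la de las álgebras exterior y simétrica mencionadas en la Observación \ref{tensoralgebra-propiedades}—, pero esto puede posponerse. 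Finalmente, la última frase de la observación, que fija la terminología ``funciones de Clifford'', es una definición y no requiere demostración.
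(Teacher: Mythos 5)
Tu propuesta es correcta y sigue esencialmente el mismo camino que el texto: el argumento estándar de unicidad por propiedad universal (obtener los dos morfismos $\overline{i_1}$, $\overline{i_2}$ por factorización cruzada y concluir que sus composiciones son la identidad invocando la unicidad del morfismo que factoriza a $i_\Phi$ a través de sí misma), que es exactamente lo que el artículo desarrolla al final de la demostración de la Proposición \ref{algebradeclifford-existencia}. La observación sobre la existencia vía el cociente $T(\mathbb{V})/\langle v\otimes v-\Phi(v)\mathbf{1}\rangle$ también coincide con la construcción del artículo.
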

\begin{prop}\label{algebradeclifford-existencia}
Para todo espacio cuadrático existe un álgebra de Clifford.
\end{prop}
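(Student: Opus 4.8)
La estrategia es la construcción estándar: realizar el álgebra de Clifford como un cociente del álgebra tensorial $T(\mathbb{V})$, cuya existencia ya está garantizada por la Proposición \ref{tensoralgebra}. Concretamente, propongo tomar el ideal bilátero $\mathcal{I}_\Phi := \langle v\otimes v - \Phi(v)\bm{1} \mid v\in\mathbb{V}\rangle \subset T(\mathbb{V})$ generado por los elementos de la forma $v\otimes v - \Phi(v)\bm{1}$ con $v\in\mathbb{V}$ (aquí $\bm{1}=1_{T(\mathbb{V})}$ vive en la componente $\mathbb{V}^{\otimes^0}=\mathbb{K}$), y definir $\mathrm{Cl}(\mathbb{V},\Phi) := T(\mathbb{V})/\mathcal{I}_\Phi$. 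Por la Proposición \ref{estructuracociente} este cociente es automáticamente un álgebra real y $\pi:T(\mathbb{V})\to T(\mathbb{V})/\mathcal{I}_\Phi$ es un morfismo de álgebras. Entonces defino $i_\Phi := \pi\circ i$, donde $i:\mathbb{V}\hookrightarrow T(\mathbb{V})$ es la inclusión natural (la transformación lineal de la Proposición \ref{tensoralgebra}); como composición de lineales, $i_\Phi$ es lineal.

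El primer paso de verificación es comprobar que $i_\Phi$ es efectivamente una función de Clifford, es decir $(i_\Phi(v))^2 = \Phi(v)\bm{1}$ en el cociente. Esto es directo: $(i_\Phi(v))^2 = \pi(i(v))\cdot\pi(i(v)) = \pi(v\otimes v)$, y como $v\otimes v - \Phi(v)\bm{1}\in\mathcal{I}_\Phi = \ker\pi$, se tiene $\pi(v\otimes v) = \pi(\Phi(v)\bm{1}) = \Phi(v)\,\pi(\bm{1}) = \Phi(v)\,1_{\mathrm{Cl}}$, usando que $\pi$ es morfismo de álgebras. El segundo paso es establecer la propiedad universal. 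Dada un álgebra real $A$ y una transformación lineal $f:\mathbb{V}\to A$ con $(f(v))^2=\Phi(v)1_A$, aplico la propiedad universal del álgebra tensorial (Proposición \ref{tensoralgebra}) a $f$ para obtener un único morfismo de álgebras $\hat f:T(\mathbb{V})\to A$ con $\hat f\circ i = f$. La clave es mostrar que $\hat f$ se anula en el ideal $\mathcal{I}_\Phi$: basta verificar que se anula en los generadores, y en efecto $\hat f(v\otimes v - \Phi(v)\bm{1}) = \hat f(i(v))^2 - \Phi(v)\hat f(\bm{1}) = f(v)^2 - \Phi(v)1_A = 0$ por hipótesis sobre $f$; como $\hat f$ es morfismo de álgebras, anularse en los generadores del ideal implica anularse en todo el ideal (cada elemento del ideal es suma de términos $a\cdot x\cdot b$ con $x$ generador, según la Definición \ref{ideal}). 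Por la propiedad universal del cociente (o el teorema de factorización de morfismos de álgebras) existe un único morfismo de álgebras $\bar f:\mathrm{Cl}(\mathbb{V},\Phi)\to A$ con $\bar f\circ\pi = \hat f$. Entonces $\bar f\circ i_\Phi = \bar f\circ\pi\circ i = \hat f\circ i = f$, como se quería.

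Finalmente hay que argumentar la unicidad de $\bar f$: si $\bar f'$ también satisface $\bar f'\circ i_\Phi = f$, entonces $\bar f'\circ\pi$ y $\hat f$ coinciden sobre la imagen de $i$, es decir sobre $\mathbb{V}\subset T(\mathbb{V})$; como $\mathbb{V}$ genera $T(\mathbb{V})$ como álgebra y ambos son morfismos de álgebras, $\bar f'\circ\pi = \hat f = \bar f\circ\pi$, y como $\pi$ es suryectiva (Proposición \ref{estructuracociente}) se concluye $\bar f' = \bar f$. El único punto que requiere algo de cuidado —y que considero el principal obstáculo conceptual, aunque no técnico— es precisamente el de que $\hat f$ se anula en todo el ideal $\mathcal{I}_\Phi$ y no sólo en sus generadores: hay que usar explícitamente que $\hat f$ es morfismo de álgebras (no sólo lineal) junto con la descripción del ideal generado dada en la Definición \ref{ideal}, ya que $\hat f(axb)=\hat f(a)\hat f(x)\hat f(b)=\hat f(a)\cdot 0\cdot\hat f(b)=0$ cuando $x$ es generador. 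Todo lo demás es seguimiento de diagramas conmutativos.
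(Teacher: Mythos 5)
Tu propuesta es correcta y sigue esencialmente el mismo camino que el texto: construir $\text{Cl}(\mathbb{V},\Phi)$ como $T(\mathbb{V})/\langle v\otimes v-\Phi(v)\mathbf{1}\rangle$ con $i_\Phi=\pi\circ i$, obtener $\hat f$ por la propiedad universal del álgebra tensorial y hacerlo descender al cociente verificando que se anula sobre el ideal (que es exactamente el cálculo $g(a)-g(b)=0$ del texto, escrito como ``anularse en los generadores implica anularse en el ideal''). Tu verificación explícita de que $(i_\Phi(v))^2=\Phi(v)\mathbf{1}$ y tu argumento de unicidad vía la generación de $T(\mathbb{V})$ por $\mathbb{V}$ y la suryectividad de $\pi$ son detalles que el texto trata con menos énfasis, pero no cambian la naturaleza de la demostración.
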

\begin{proof}
Vamos a ver que podemos definirla como un cociente del álgebra tensorial del espacio vectorial $\mathbb{V}$ con un ideal.

Sea $\mathcal{I}$ el ideal generado por los elementos en $T(\mathbb{V})$ de la forma $v\otimes{v}-\Phi(v){\bf{1}}_{T(\mathbb{V})}$, esto es $\mathcal{I}=\langle{\{v\otimes{v}-\Phi(v){\bf{1}}_{T(\mathbb{V})}}|v\in{\mathbb{V}}\}\rangle\overset{\text{not}}{=}\langle{v\otimes{v}-\Phi(v){\bf{1}}_{T(\mathbb{V})}}\rangle$. Sean $i_1$ la inclusión $V\overset{i_1}{\hookrightarrow}T(\mathbb{V})$ y $\pi$ la proyección $T(V)\overset{\pi}{\rightarrow}{T(\mathbb{V})/\mathcal{I}}$, esto es $\pi(a)=a+\mathcal{I}\;\forall{a\in{T(\mathbb{V})}}$. Basta con definir:
\begin{equation}
\begin{split}
\text{Cl}(\Phi)&=T(V)/\mathcal{I}\\
i_{\Phi}&=\pi\circ{i_1}\\
\end{split}
\end{equation}
Por la proposición \ref{estructuracociente} sabemos que $T(\mathbb{V})/\mathcal{I}$ tiene una estructura de álgebra sobre $\mathbb{R}$ bien definida y que además $\pi$ es morfismo suryectivo de álgebras con $\ker{\pi}=\mathcal{I}$. Veremos que dada un álgebra real $A$ cualquiera y una transformación lineal $f:\,\mathbb{V}\rightarrow{A}$ que satisface $(f(v))^2=\Phi(v)1_{A}$, existe un único morfismo de álgebras $\overline{f}:\,T(\mathbb{V})/\mathcal{I}\rightarrow{A}$ que satisface $f=\overline{f}\circ{i_{\Phi}}$

Por la definición de álgebra tensorial sabemos que va a existir un único morfismo de álgebras $g$ tal que el siguiente diagrama es conmutativo
\[
\begin{tikzcd}[sep=5em]
\mathbb{V} \arrow{r}{i_{1}} \arrow{rd}{f} 
  & T(\mathbb{V}) \arrow[rightarrow]{d}{\exists{!}\;g} \\
    & A
\end{tikzcd}
\]

Si agregamos la proyección en el cociente $T(\mathbb{V})/\mathcal{I}$, podemos pensar en el siguiente diagrama:

\[
\begin{tikzcd}[sep=5em]
\mathbb{V} 
  \ar{dr}[swap]{f} 
  \ar{r}{i_1} 
& 
 T(\mathbb{V})
  \ar{d}[swap]{g} 
  \ar{r}{\pi}
& 
 T(\mathbb{V})/\mathcal{I}
  \ar[dashed]{dl}{\overline{f}{?}} \\
& A
\end{tikzcd}
\]
en donde lo que buscamos es una forma de definir un morfismo de álgebras $\bar{f}$ que cumpla $f=\bar{f}\circ{i_{\Phi}}=\overline{f}\circ{\pi}\circ{i_1}$. Para ello utilizaremos el hecho de que $\pi$ es suryectiva. De este modo dado $\overline{a}=a+\mathcal{I}\in{T(\mathbb{V})/\mathcal{I}}$, definimos la función $\overline{f}$ como sigue:
\begin{equation}
\overline{f}(\overline{a})=g(a).
\end{equation}
Tenemos que ver que $\overline{f}$ está bien definida y que cumple el diagrama conmutativo. Sean $a$ y $b$ en $\overline{a}$, para que $\overline{f}(\overline{a})$ sea independiente del representante, entonces necesitaremos que $g(a)=g(b)$.
Como $a\sim{b}$, entonces $a-b\in{I}$, tenemos entonces que:
\begin{equation}
\begin{split}
g(a)-g(b)&=g(a-b)=g\Big{(}\sum_{i=1}^{m}{a_i{(v_i\otimes{v_i}}-\Phi(v_i)\mathbf{1}){b_i}}\Big{)}=\\
&=\sum_{i=1}^{m}\Big{[}{g(a_i)g(v_i\otimes{v_i})g({b_i}})-\Phi(v_i)g(a_i)g(\mathbf{1})g({b_i})\Big{]}=\\
&=\sum_{i=1}^{m}\Big{[}{g(a_i)g(i_1(v_i)i_1(v_i))g({b_i}})-\Phi(v_i)g(a_i)g({b_i})\Big{]}=\\
&=\sum_{i=1}^{m}\Big{[}{g(a_i)g(i_1(v_i))g(i_1(v_i))g({b_i}})-\Phi(v_i)g(a_i)g({b_i})\Big{]}=\\
&=\sum_{i=1}^{m}\Big{[}{g(a_i)f(v_i)^{2}g({b_i}})-\Phi(v_i)g(a_i)g({b_i})\Big{]}=\\
&=\sum_{i=1}^{m}\Big{[}{g(a_i)\big{(}f(v_i)^{2}-\Phi(v_i)\big{)}g({b_i})}\Big{]}=0.\\
\end{split}
\end{equation}
Donde la última igualdad se da porque $f$ cumple la condición $f(v)^2=\Phi(v)\mathbf{1}_{A}$ para todo $v\in{\mathbb{V}}$. Por lo tanto tenemos que $g(a)-g(b)=0$, con lo cual $g(a)=g(b)$ y $\overline{f}$ está bien definida.

Como $g$ es morfismo de álgebras, entonces $\overline{f}$ también lo será. Más aún, como $g$ es el único morfismo de álgebras que cumple $f=g\circ{i_1}$, entonces $\overline{f}$ será el único que cumpla $f=\overline{f}\circ{i_\Phi}$, a menos de isomorfismos.

Hemos encontrado un álgebra $\text{Cl}(\mathbb{V},\Phi)$ y una aplicación $i_{\Phi}$ que cumplen lo deseado, resta ver que si existen dos álgebras $\text{Cl}(\mathbb{V},\Phi)$ y  $\text{Cl}'(\mathbb{V},\Phi)$ con respectivas aplicaciones $i_{\Phi}$ e $i'_{\Phi}$ que satisfacen la propiedad universal son isomorfas.

Por un lado tenemos el siguiente diagrama conmutativo:
\[
\begin{tikzcd}[sep=5em]
\mathbb{V} \arrow{r}{i_{\Phi}} \arrow{rd}{i'_{\Phi}} 
  & \text{Cl}(\mathbb{V},\Phi) \arrow[rightarrow]{d}{\exists{!}\;i} \\
    & \text{Cl}'(\mathbb{V},\Phi)
\end{tikzcd}
\]
donde $i$ es el único morfismo de álgebras que satisface $i'_{\Phi}=i\circ{i_{\Phi}}$. 
Además se cumple:
\[
\begin{tikzcd}[sep=5em]
\mathbb{V} \arrow{r}{i'_{\Phi}} \arrow{rd}{i_{\Phi}} 
  & \text{Cl}'(\mathbb{V},\Phi) \arrow[rightarrow]{d}{\exists{!}\;i'} \\
    & \text{Cl}(\mathbb{V},\Phi)
\end{tikzcd}
\]
donde $i'$ es el único morfismo de álgebras que satisface $i_{\Phi}=i'\circ{i'_{\Phi}}$.
Combinando estos dos diagramas tenemos que:
\begin{equation}
i'_\Phi=i\circ{i'}\circ{i'_{\Phi}}\quad\quad y \quad\quad i_\Phi=i'\circ{i}\circ{i_{\Phi}}
\end{equation}
Por otro lado tenemos que el morfismo identidad satisface el siguiente diagrama conmutativo:
\[
\begin{tikzcd}[sep=5em]
\mathbb{V} \arrow{r}{i_{\Phi}} \arrow{rd}{i_{\Phi}} 
  & \text{Cl}(\mathbb{V},\Phi) \arrow[rightarrow]{d}{id} \\
    & \text{Cl}(\mathbb{V},\Phi)
\end{tikzcd}
\]
Por la propiedad universal, $id$ será el único morfismo que satisfaga el diagrama, con lo cual $i'\circ{i}=id$. Análogamente se ve que $i'\circ{i}=id'$, donde $id'$ es la identidad en $\text{Cl}'(\mathbb{V},\Phi)$. 
Así $i^{-1}=i'$, $i'^{-1}=i$, $i$ es un isomorfismo y $\text{Cl}(\mathbb{V},\Phi)\cong{\text{Cl}'(\mathbb{V},\Phi)}$
\end{proof}

\begin{obs}\label{funcionclifford-propiedades}
Cuando no sea confuso se notará la función $i_{\Phi}$ por $i$. Se tienen las siguientes propiedades:
\begin{itemize}
\item{$i(u)i(v)+i(v)i(u)=\varphi(u,v){\bf{1}}\quad\forall{u,v\in{\mathbb{V}}}$. Esto implica que si $u$ y $v$ son ortogonales, entonces $i(u)i(v)=-i(v)i(u)$}
\item{Supongamos que existe un vector isotrópico $u\neq{0}$ en $\mathbb{V}$, entonces $i(u)^{2}=0$, con lo cual, en general el álgebra de Clifford tendrá divisores del cero.}
\item{$\text{Cl}(\Phi)$ está generada por el conjunto $\{{\bf{1}}, i_{\Phi}(\mathbb{V})\}$. Es decir, todo elemento en el álgebra de Clifford es combinación lineal de productos finitos de elementos en dicho conjunto. Esto se deduce del hecho de que el álgebra tensorial está generada por $\{1, i(\mathbb{V})\}$}
\item{Como todo espacio cuadrático regular admite una base ortogonal, dos álgebras de Clifford de métricas no degeneradas de la misma signatura $(p,q)$ van a ser isomorfas. Es por esto que notaremos directamente $\text{Cl}_{p,q}$, al álgebra de Clifford correspondiente a $\Phi(x_1,...,x_{p+1})=x_1^2+...+x_p^2-(x_{p+1}^2+...+x_{p+q}^2)$ y llamaremos $\text{Cl}_n$ a $\text{Cl}_{0,n}$}
\end{itemize}
\end{obs}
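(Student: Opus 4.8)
Dado que esta observación reúne cuatro afirmaciones, propongo tratarlas por separado, apoyándome en todos los casos en la relación definitoria $i(v)^2=\Phi(v)\mathbf{1}$ de la Definición \ref{algebradeclifford}. Para la primera propiedad, el plan es \emph{linealizar} dicha relación. Aplicándola al vector $u+v$ y usando que $i$ es lineal se obtiene
\[
\Phi(u+v)\mathbf{1}=i(u+v)^2=i(u)^2+i(u)i(v)+i(v)i(u)+i(v)^2=\big(\Phi(u)+\Phi(v)\big)\mathbf{1}+i(u)i(v)+i(v)i(u).
\]
Cancelando $\Phi(u)\mathbf{1}+\Phi(v)\mathbf{1}$ y reescribiendo $\Phi(u+v)-\Phi(u)-\Phi(v)$ mediante la ecuación \eqref{bilin-cuad} se despeja el anticonmutador $i(u)i(v)+i(v)i(u)$ en términos de $\varphi(u,v)\mathbf{1}$. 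La consecuencia para vectores ortogonales es inmediata: si $\varphi(u,v)=0$ el miembro derecho se anula y entonces $i(u)i(v)=-i(v)i(u)$.

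La segunda propiedad es directa a partir de la Definición \ref{clasif-vectores}: si $u\neq0$ es isotrópico entonces $\Phi(u)=0$, de modo que $i(u)^2=\Phi(u)\mathbf{1}=0$; suponiendo $i(u)\neq0$, esto exhibe un elemento no nulo con cuadrado nulo, es decir, un divisor de cero. La tercera propiedad se apoya en la construcción de la Proposición \ref{algebradeclifford-existencia}. Primero observo que $T(\mathbb{V})=\bigoplus_{i\ge0}\mathbb{V}^{\otimes i}$ está generada como álgebra por $\{1\}\cup i_1(\mathbb{V})$, pues cada sumando $\mathbb{V}^{\otimes k}$ está generado por productos $v_1\cdots v_k$ de elementos de $\mathbb{V}$. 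Como $\pi:T(\mathbb{V})\to\mathrm{Cl}(\Phi)$ es un morfismo \emph{suryectivo} de álgebras, todo elemento de $\mathrm{Cl}(\Phi)$ es de la forma $\pi(t)$; escribiendo $t$ como combinación lineal de tales productos y usando que $\pi$ preserva el producto se concluye que $\{\mathbf{1}\}\cup i_\Phi(\mathbb{V})$ genera $\mathrm{Cl}(\Phi)$.

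La cuarta afirmación es la de mayor contenido y la que espero sea el principal obstáculo. El plan consta de dos pasos. En el primero, dados dos espacios cuadráticos regulares $(\mathbb{V},\Phi)$ y $(\mathbb{V}',\Phi')$ de la misma signatura $(p,q)$, uso la invariancia de la signatura (Apéndice \ref{apA}) para elegir en cada uno una base ortogonal en la que la matriz de la forma es $\mathrm{diag}(1,\ldots,1,-1,\ldots,-1)$; la aplicación lineal $L:\mathbb{V}\to\mathbb{V}'$ que identifica ambas bases es entonces una isometría, esto es $\Phi'(L(v))=\Phi(v)$ para todo $v$. En el segundo paso traslado $L$ a las álgebras por funtorialidad: la composición $i'_{\Phi'}\circ L$ es lineal y satisface $(i'_{\Phi'}(L(v)))^2=\Phi'(L(v))\mathbf{1}=\Phi(v)\mathbf{1}$, de modo que es una función de Clifford para $(\mathbb{V},\Phi)$ y, por la propiedad universal de $\mathrm{Cl}(\Phi)$, induce un único morfismo $\widehat{L}:\mathrm{Cl}(\Phi)\to\mathrm{Cl}(\Phi')$. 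Aplicando lo mismo a $L^{-1}$ obtengo $\widehat{L^{-1}}$ en sentido inverso, y repitiendo el argumento de unicidad ya usado en la Proposición \ref{algebradeclifford-existencia} (las composiciones $\widehat{L^{-1}}\circ\widehat{L}$ y $\widehat{L}\circ\widehat{L^{-1}}$ extienden las funciones de Clifford identidad, al igual que los morfismos identidad, y la unicidad fuerza la igualdad) concluyo que $\widehat{L}$ es un isomorfismo de álgebras. La parte delicada es precisamente esta última: garantizar la existencia de la isometría $L$ a partir de la mera coincidencia de signaturas (lo que descansa en la ley de inercia de Sylvester) y manejar con cuidado la cláusula de unicidad de la propiedad universal para deducir que $\widehat{L}$ es biyectiva, y no sólo un morfismo.
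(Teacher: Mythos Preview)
Tu propuesta es correcta y coincide con el espíritu del artículo, que trata esta observación sin demostración formal y solo insinúa los argumentos (polarización para el primer punto, la suryectividad de $\pi$ para el tercero); tu desarrollo del cuarto punto vía funtorialidad de la propiedad universal es el argumento estándar y está bien ejecutado. Un único detalle: si llevas tu cálculo del primer punto hasta el final usando la ecuación~\eqref{bilin-cuad} obtendrás $i(u)i(v)+i(v)i(u)=2\varphi(u,v)\mathbf{1}$, no $\varphi(u,v)\mathbf{1}$ como figura en el enunciado; se trata de una errata del artículo, que de hecho usa la versión correcta con el factor $2$ más adelante en la Proposición~\ref{reflexiones}.
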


\begin{ejmp}\label{ejemplos-algebrasdeclifford}
Calculemos el álgebra de Clifford para algunos casos
\begin{enumerate}
\item Sea $\mathbb{V}=\mathbb{R}$, como $\mathbb{R}$ espacio vectorial. Sea $\{e_1\}$ base de $\mathbb{R}$ tal que $\Phi(x_1e_1)=-x_1^2$. En ese caso $\Phi(e_1)=-1$, con lo cual $(i(e_1))^2=-{\bf{1}}$. Esta álgebra es $\text{Cl}_1$ por definición y podemos ver que es isomorfa a $\mathbb{C}$, bajo el isomorfismo:
\begin{equation}
\begin{split}
\text{Cl}_1&\rightarrow{\mathbb{C}},\\
{\bf{1}}&\mapsto{1},\\
i(e_1)&\mapsto{i}.\\
\end{split}
\end{equation}
Otra forma de construir el álgebra de Clifford es directamente a partir de su definición, es decir, construir $T(\mathbb{R})$, el ideal $\mathcal{I}=\langle{v\otimes{v}-\Phi(v){\bf{1}}}\rangle$ y construir el álgebra cociente.
Tenemos que $T(\mathbb{R})\cong{\mathbb{R}[x]}$, los polinomios con coeficientes reales en una indeterminada, donde identificamos $x=e_1$. Tenemos que el ideal es:
\begin{equation}
 \mathcal{I}=\langle{x_1^2e_1\otimes{e_1}+x_1^2{\bf{1}}}\rangle=\langle{e_1\otimes{e_1}+{\bf{1}}}\rangle=\langle{x^2+1}\rangle ,
\end{equation}
con lo cual el álgebra de Clifford está dada por:
\begin{equation}\label{Cl1}
\text{Cl}_1=T(\mathbb{R})/\mathcal{I}=\mathbb{R}[x]/\langle{x^2+1}\rangle=\mathbb{R}\overline{1}+\mathbb{R}\overline{x},
\end{equation}
donde la barra indica la proyección de dicho elemento en el cociente. Podemos ver que esto es así, puesto que, dado un polinomio $p(x)\in{\mathbb{R}[x]}$, por el algoritmo de la división en el anillo de polinomios tenemos que:
\begin{equation}
p(x)=(x^2+1)q(x)+r(x)\text{ con $r(x)=0$ o gr$(r(x))\in\{0,1\}$} ,
\end{equation}
al tomar clases de equivalencia en el cociente se tiene:
\begin{equation}
\overline{p(x)}=\overline{(x^2+1)q(x)}+\overline{r(x)}=\overline{0}+\overline{r(x)}=\overline{r(x)}.
\end{equation}
Donde hemos usado que $(x^2+1)q(x)$ está en el ideal. Entonces, la proyección de cualquier polinomio en el cociente es siempre igual a la proyección de algún polinomio de grado a lo sumo uno, con lo cual el álgebra está dada por la ecuación \ref{Cl1}.

Para ver el isomorfismo con los complejos basta con calcular el producto entre dos elementos en el álgebra:
\begin{equation}
(a\overline{1}+b\overline{x})(c\overline{1}+d\overline{x})=ac\overline{1}+bd\overline{x^2}+(ad+bc)\overline{x},
\end{equation}
donde $\overline{x^2}$ se puede escribir como:
\begin{equation}
\overline{x^2}=x^2+\langle{x^2+1}\rangle=x^2-x^2-1+\langle{x^2+1}\rangle=-1+\langle{x^2+1}\rangle={-}\overline{1}.
\end{equation}
De esta manera la ecuación anterior queda:
\begin{equation}
(a\overline{1}+b\overline{x})(c\overline{1}+d\overline{x})=(ac-bd)\overline{1}+(ad+bc)\overline{x},
\end{equation}
lo que imita exactamente al producto entre dos números complejos.

\item Calcularemos el álgebra para el caso en que $\mathbb{V}=\mathbb{R}$ pero la forma cuadrática es definida positiva, es decir $\text{Cl}_{1,0}$. Tomaremos $\{e_1\}$ base ortonormal de $\mathbb{R}$, esto es $\Phi{(x_1e_1)}=x_1^2$.

En este caso tenemos que $\text{Cl}_{1,0}\cong{\mathbb{R}\oplus{\mathbb{R}}}$, esto es el espacio vectorial $\mathbb{R}\times\mathbb{R}$ con el producto definido por:
\begin{equation}\label{productosplitcomplex}
(a,b)(c,d)=(ac,db).
\end{equation}
Este conjunto (que es un álgebra conmutativa con divisores del cero) se conoce con el nombre de \emph{números complejos hiperbólicos} o \emph{números de Study} (también en inglés se conocen como \emph{split-complex numbers}). El isomorfismo queda determinado por la correspondencia:
\begin{equation}\label{Cl10}
\begin{split}
\text{Cl}_{1,0}&\rightarrow{\mathbb{R}\oplus\mathbb{R}},\\
{\bf{1}}&\mapsto{(1,1)},\\
i(e_1)&\mapsto{(1,-1)}.\\
\end{split}
\end{equation}
Se puede ver que $(i(e_1))^2=1$ y que el producto está dado por:
\begin{equation}
(a{\bf{1}}+b{i(e_1)})(c{\bf{1}}+d{i(e_1)}))=(ac+bd){\bf{1}}+(ad+bc){i(e_1)},
\end{equation}
lo que es consistente con la correspondencia expresada en la ecuación \ref{Cl10} y la regla de multiplicación \ref{productosplitcomplex}.

\item Por úlimo calcularemos $\text{Cl}_2=\text{Cl}_{0,2}$. En este caso el espacio vectorial será $\mathbb{R}^2$ y $\{e_1,e_2\}$ una base ortonormal, de modo que $\Phi{(x_1e_1+x_2e_2)}=-(x_1^2+x_2^2)$ para todo par $x_1, x_2$ de números reales. 

Se puede ver facilmente que $i(e_1)^2=i(e_2)^2=-1$ y que $(i(e_1)i(e_2))^2=-1$ y que el álgebra es isomorfa al álgebra de los cuaternios $\mathbb{H}$, bajo la siguiente correspondencia:
\begin{equation}
\begin{split}
\text{Cl}_{2}&\rightarrow{\mathbb{H}},\\
{\bf{1}}&\mapsto{\bf{1}},\\
i(e_1)&\mapsto{\bf{i}},\\
i(e_2)&\mapsto{\bf{j}}, \\
i(e_1)i(e_2)&\mapsto{\bf{k}}.\\
\end{split}
\end{equation}
\end{enumerate}
\end{ejmp}

\subsection{Involuciones canónicas y estructura lineal del álgebra de Clifford}

Tanto en el álgebra de números complejos como en los cuaternios tenemos operaciones de conjugación. Este tipo de operaciones, que al aplicarse dos veces dan como resultado la identidad, se conocen como operaciones de \emph{involución}. En el contexto de álgebras de Clifford existen dos involuciones canónicas que definiremos en esta sección y además la conjugación de Clifford que se define a partir de ellas.
En esta sección definiremos involuciones canónicas en el álgebra de Clifford y estudiaremos la estructura de espacio vectorial del álgebra, concluyendo con un teorema en el cual se construye una base para el álgebra y se garantiza la inyectividad de la función $i_{\Phi}$ cuando el espacio cuadrático es de dimensión finita.

\begin{prop}
Toda álgebra de Clifford tiene un único \emph{antiautomorfismo canónico} $\nobreak{t{:}\text{\emph{Cl}}(\Phi)\rightarrow\text{\emph{Cl}}(\Phi)}$ satisfaciendo las siguientes propiedades:
\begin{equation}
t\circ{t}=id\ \ ;\ \ t(xy)=t(y)t(x) \ \ ;\ \ t(i(v))=i(v) \forall{v}\in{\mathbb{V}}, 
\end{equation}
donde la segunda igualdad es la definición de \emph{antimorfismo}. Notación: $t(x):=x^t$
\end{prop}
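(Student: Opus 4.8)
The natural strategy is to use the universal property of the Clifford algebra from Definition~\ref{algebradeclifford} to produce $t$, then argue uniqueness separately. First I would observe that the map $i_{\Phi}\colon\mathbb{V}\to\text{Cl}(\Phi)$ is itself a Clifford function: we must exhibit an algebra $A$ and a Clifford function $f\colon\mathbb{V}\to A$ whose induced morphism $\bar f$ will be (after a twist) the desired antiautomorphism. The obstacle is that the universal property produces \emph{morphisms}, not \emph{antimorphisms}, so one cannot feed $i_{\Phi}$ directly into it. The standard device is the \emph{opposite algebra}: let $\text{Cl}(\Phi)^{\mathrm{op}}$ be the same underlying vector space with reversed multiplication $a\cdot_{\mathrm{op}}b := ba$. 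One checks this is again a real algebra with the same unit. Now consider $i_{\Phi}$ viewed as a linear map $\mathbb{V}\to\text{Cl}(\Phi)^{\mathrm{op}}$; since $i_{\Phi}(v)\cdot_{\mathrm{op}}i_{\Phi}(v) = i_{\Phi}(v)i_{\Phi}(v) = \Phi(v)\mathbf{1}$, this is a Clifford function into $\text{Cl}(\Phi)^{\mathrm{op}}$.

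By the universal property there is a unique algebra morphism $\bar f\colon\text{Cl}(\Phi)\to\text{Cl}(\Phi)^{\mathrm{op}}$ with $\bar f\circ i_{\Phi}=i_{\Phi}$. Define $t\colon\text{Cl}(\Phi)\to\text{Cl}(\Phi)$ to be $\bar f$ regarded as a map into the original algebra. Then for $x,y\in\text{Cl}(\Phi)$ we have $t(xy) = \bar f(x)\cdot_{\mathrm{op}}\bar f(y) = \bar f(y)\bar f(x) = t(y)t(x)$, so $t$ is an antimorphism; and $t(i_{\Phi}(v)) = i_{\Phi}(v)$ by construction; also $t(\mathbf{1}) = \mathbf{1}$ since $\bar f$ is a morphism. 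For the involutivity $t\circ t = \mathrm{id}$: the composite $t\circ t$ is an algebra morphism $\text{Cl}(\Phi)\to\text{Cl}(\Phi)$ (composition of two antimorphisms that each preserve the unit) and it fixes $i_{\Phi}(v)$ for all $v$; by the uniqueness clause of the universal property applied to the Clifford function $i_{\Phi}\colon\mathbb{V}\to\text{Cl}(\Phi)$, the only morphism with that property is the identity, hence $t\circ t=\mathrm{id}$. In particular $t$ is bijective, so it is a genuine antiautomorphism.

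Uniqueness follows by essentially the same argument: if $t'$ is another antiautomorphism satisfying the three conditions, then $t'\circ t$ is an algebra morphism $\text{Cl}(\Phi)\to\text{Cl}(\Phi)$ fixing every $i_{\Phi}(v)$ and fixing $\mathbf{1}$, hence equals $\mathrm{id}$ by universality; thus $t' = t^{-1} = t$. The one technical point worth spelling out is the verification that $\text{Cl}(\Phi)^{\mathrm{op}}$ is indeed a real algebra in the sense of Definition~\ref{algebra-sobre-cuerpo} --- associativity and bilinearity of $\cdot_{\mathrm{op}}$ are immediate, and $\mathbf{1}$ remains a two-sided unit --- but this is routine. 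The only genuine subtlety, and the step I would be most careful about, is making sure that the uniqueness half of the universal property is invoked correctly: it is the assertion that a morphism out of $\text{Cl}(\Phi)$ is determined by its restriction to $i_{\Phi}(\mathbb{V})$, which holds precisely because $\{\mathbf{1}, i_{\Phi}(\mathbb{V})\}$ generates $\text{Cl}(\Phi)$ as an algebra (third item of Observation~\ref{funcionclifford-propiedades}). Everything else is bookkeeping.
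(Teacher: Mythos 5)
Tu propuesta es correcta y sigue esencialmente el mismo camino que la prueba del trabajo: construir $t$ como el morfismo inducido por la propiedad universal hacia el álgebra opuesta $\text{Cl}(\Phi)^{op}$, y obtener $t\circ t=\mathrm{id}$ (y la unicidad) observando que $t\circ t$ es un morfismo que fija $i_{\Phi}(v)$ y que la identidad es el único tal morfismo. El único añadido tuyo es explicitar el argumento de unicidad de $t$ y la verificación de que $\text{Cl}(\Phi)^{op}$ es un álgebra, detalles que el trabajo da por sobreentendidos.
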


\begin{proof}
Consideremos el álgebra opuesta del álgebra de Clifford $\text{Cl}(\Phi)^{op}$ que consiste de los elementos de $\text{Cl}(\Phi)$ pero con el producto $*$, dado por $a*b=ba$, donde el producto en el miembro derecho es el producto en $\text{Cl}(\Phi)$. Sabemos que se cumple el siguiente diagrama:
\[
\begin{tikzcd}[sep=5em]
\mathbb{V} \arrow{r}{i_{\Phi}} \arrow{rd}{i_{\Phi}} 
  & \text{Cl}(\mathbb{V},\Phi) \ar[dashed]{d}{t} \\
    & \text{Cl}^{op}(\mathbb{V},\Phi)
\end{tikzcd}
\]
Existe un único morfismo $t$ tal que cumple $i_{\Phi}=t\circ{i_{\Phi}}$, esto es equivalente a decir que $t(i(v))=i(v)$ con $i=i_{\Phi}$. Además $t(xy)=t(x)*t(y)=t(y)t(x)$, con lo cual es un antimorfismo en $\text{Cl}(\Phi)$.
Resta ver que $t\circ{t}=id$.

Podemos pensar a t como una función de $\text{Cl}(\Phi)$ en $\text{Cl}(\Phi)$, en cuyo caso podemos componerla consigo misma. Notemos que $t\circ{t}$ es un morfismo. En efecto,
\begin{equation}
t\circ{t}(xy)=t(t(xy))=t(t(y)t(x))=t(t(x))t(t(y))=(t\circ{t})(x)(t\circ{t})(y),
\end{equation}
además tenemos:
\begin{equation}
t\circ{t}(i(v))=t(t(i(v)))=t(i(v))=i(v),
\end{equation}
con lo cual $(t\circ{t})\circ{i}=i$, pero el mapeo identidad en $\text{Cl}(\Phi)$ es el único que cumple esto, con lo cual $t\circ{t}=id$.
\end{proof}

\begin{prop}
Toda álgebra de Clifford tiene un único \emph{automorfismo canónico} $\nobreak{\alpha:\text{\emph{Cl}}(\Phi)\rightarrow\text{\emph{Cl}}(\Phi)}$ satisfaciendo las siguientes propiedades:
\begin{equation}
\alpha\circ{\alpha}=id\ \ ;\ \ \alpha(xy)=\alpha(x)\alpha(y) \ \ ;\ \ \alpha(i(v))=-i(v) \forall{v}\in{\mathbb{V}}. 
\end{equation}
\end{prop}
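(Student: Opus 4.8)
The plan is to apply the universal property of the Clifford algebra (Definition \ref{algebradeclifford}) to the linear map $v\mapsto -i(v)$, exactly paralleling the construction of the antiautomorphism $t$ in the previous proposition, except that now we work inside $\text{Cl}(\Phi)$ itself rather than inside its opposite algebra.

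First I would check that the linear map $f\colon\mathbb{V}\rightarrow\text{Cl}(\Phi)$ given by $f(v)=-i(v)$ is a función de Clifford, i.e.\ that $f(v)^2=\Phi(v)\mathbf{1}$: indeed $f(v)^2=(-i(v))^2=i(v)^2=\Phi(v)\mathbf{1}$, using that $i=i_\Phi$ is itself una función de Clifford. Since $\text{Cl}(\Phi)$ is a real algebra, the universal property of Definition \ref{algebradeclifford} applied to $A=\text{Cl}(\Phi)$ and this $f$ yields a unique morphism of algebras $\alpha\colon\text{Cl}(\Phi)\rightarrow\text{Cl}(\Phi)$ with $\alpha\circ i_\Phi=f$, that is, $\alpha(i(v))=-i(v)$ for all $v\in\mathbb{V}$. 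This simultaneously gives the existence of $\alpha$, its multiplicativity $\alpha(xy)=\alpha(x)\alpha(y)$ (it is an algebra morphism), and its uniqueness among algebra morphisms satisfying $\alpha(i(v))=-i(v)$, since any such morphism is a morphism $\text{Cl}(\Phi)\rightarrow\text{Cl}(\Phi)$ making the universal diagram commute.

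It remains to prove $\alpha\circ\alpha=id$. I would observe that $\alpha\circ\alpha$ is again a morphism of algebras from $\text{Cl}(\Phi)$ to itself (composition of morphisms), and compute $(\alpha\circ\alpha)(i(v))=\alpha(-i(v))=-\alpha(i(v))=-(-i(v))=i(v)$ for every $v\in\mathbb{V}$; hence $(\alpha\circ\alpha)\circ i_\Phi=i_\Phi$. But the identity morphism $id_{\text{Cl}(\Phi)}$ also satisfies $id_{\text{Cl}(\Phi)}\circ i_\Phi=i_\Phi$, and the universal property forces this morphism to be unique, so $\alpha\circ\alpha=id_{\text{Cl}(\Phi)}$. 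In particular $\alpha$ is invertible with $\alpha^{-1}=\alpha$, so it is indeed an automorphism, as claimed. I do not expect any genuine obstacle here: the only point demanding care is to take the target algebra in the universal property to be $\text{Cl}(\Phi)$ with its own structure (so that $\alpha$ is literally an endomorphism that can be composed with itself) and to invoke the uniqueness clause of Definition \ref{algebradeclifford} twice — once for the uniqueness of $\alpha$ and once for the involutivity $\alpha\circ\alpha=id$.
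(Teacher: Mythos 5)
Tu propuesta es correcta y sigue esencialmente el mismo camino que la prueba del trabajo: aplicar la propiedad universal de $\text{Cl}(\Phi)$ a la función de Clifford $v\mapsto -i(v)$ para obtener el único morfismo $\alpha$ con $\alpha\circ i_\Phi=-i_\Phi$, y luego usar de nuevo la unicidad para concluir que $\alpha\circ\alpha=id$. La única diferencia es que tú verificas explícitamente que $(-i(v))^2=\Phi(v)\mathbf{1}$, detalle que el texto deja implícito.
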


\begin{proof}
Consideremos el siguiente diagrama:
\[
\begin{tikzcd}[sep=5em]
\mathbb{V} \arrow{r}{i_{\Phi}} \arrow{rd}{-i_{\Phi}} 
  & \text{Cl}(\mathbb{V},\Phi) \ar[dashed]{d}{\alpha} \\
    & \text{Cl}(\mathbb{V},\Phi)
\end{tikzcd}
\]
Existe un único morfismo $\alpha$ tal que cumple $-i_{\Phi}=\alpha\circ{i_{\Phi}}$, con lo cual $\alpha$ es un automorfismo en $\text{Cl}(\Phi)$.
Se puede ver fácilmente que $(\alpha\circ{\alpha})\circ{i}=i$ y como el único morfismo que cumple esto es la identidad, tenemos que $\alpha\circ\alpha=\text{id}$.
\end{proof}

Las funciones $\alpha$ y $t$ se conocen respectivamente como $\emph{involución de grado}$ (grade involution en inglés) y $\emph{reversión}$ respectivamente. Los mismos quedan definidos en productos de vectores por:
\begin{equation}
\alpha(i(v_1)i(v_2)...i(v_k))=(-1)^{k}i(v_1)i(v_2)...i(v_k)
\end{equation}
\begin{equation}
t(i(v_1)i(v_2)...i(v_k))=i(v_k)...i(v_2)i(v_1)
\end{equation}
Además se define la \emph{conjugación de Clifford} como $\alpha\circ{t}=t\circ{\alpha}$, que se denota por $\overline{x}:=\alpha\circ{t}(x)=t\circ{\alpha}(x)$. Esta operación queda definida en productos de vectores por:
\begin{equation}
\overline{i(v_1)i(v_2)...i(v_k)}=(-1)^{k}i(v_k)...i(v_2)i(v_1).
\end{equation}

Una cuestión que aún no hemos abordado es la de la dimensión del álgebra de Clifford. Resulta que para un espacio cuadrático regular de dimensión finita, la función $i_{\Phi}$ es inyectiva y la dimensión del álgebra de Clifford es $2^n$. Para probar este resultado primero deberemos introducir el concepto de álgebra graduada y súper-álgebra tensorial.

Observemos en primer lugar que el álgebra de Clifford de cualquier espacio cuadrático admite la siguiente descomposición:

\begin{equation}\label{Z2grading}
\text{Cl}(\Phi)=\text{Cl}^{0}(\Phi)\oplus{\text{Cl}^{1}(\Phi)},
\end{equation}
donde:
\begin{equation}\label{gradingprop}
\text{Cl}^{i}(\Phi)=\{x\in\text{Cl}(\Phi)|\alpha(x)=(-1)^{i}x\}\ \ ; \ \ i\in\{0,1\}.
\end{equation}
Dados $x\in\text{Cl}^{i}(\Phi)$, $y\in\text{Cl}^{j}(\Phi)$, se cumple que $xy\in\text{Cl}^{i+j (mod 2)}(\Phi)$. Entonces decimos que $\text{Cl}(\Phi)$ es un \emph{álgebra $\mathbb{Z}_2$- graduada} o que tiene una graduación $\mathbb{Z}_2$.

\begin{obs}Notemos lo siguiente:
\begin{itemize}
\item De la propiedad \ref{gradingprop} se deduce que $\text{Cl}^{0}(\Phi)$ es una subálgebra pero $\text{Cl}^{1}(\Phi)$ no lo es.
\item En caso de que $\mathbb{V}$ sea de dimensión finita se puede ver que si $a\in\text{Cl}^{0}(\Phi)$ entonces todos los sumandos en $a$ son productos pares de elementos en $i(\mathbb{V})$ y que todo $b\in\text{Cl}^{1}(\Phi)$ está formado por sumandos de productos impares de elementos en $i(\mathbb{V})$.
\end{itemize}
\end{obs}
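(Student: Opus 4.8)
El plan es tratar las dos afirmaciones por separado, usando que $\alpha$ es un automorfismo de álgebras de $\text{Cl}(\Phi)$ y que $\text{Cl}(\Phi)$ está generada por $\{\mathbf 1,i(\mathbb V)\}$ (observación \ref{funcionclifford-propiedades}). Para la primera, observaría que $\text{Cl}^0(\Phi)$ y $\text{Cl}^1(\Phi)$, definidos en \eqref{gradingprop}, son los subespacios propios de $\alpha$ para los autovalores $+1$ y $-1$; de la linealidad de $\alpha$ sigue que ambos son subespacios. Que $\text{Cl}^0(\Phi)$ sea cerrado bajo el producto saldrá de la multiplicatividad de $\alpha$: si $\alpha(x)=x$ y $\alpha(y)=y$ entonces $\alpha(xy)=\alpha(x)\alpha(y)=xy$ (es el caso $i=j=0$ de la propiedad de graduación). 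Como además $\alpha(\mathbf 1)=\mathbf 1$, se tiene $\mathbf 1\in\text{Cl}^0(\Phi)$, luego $\text{Cl}^0(\Phi)$ es una subálgebra. En cambio $\mathbf 1\notin\text{Cl}^1(\Phi)$, pues $\alpha(\mathbf 1)=\mathbf 1\neq-\mathbf 1$ por ser $\mathbb R$ de característica cero, así que $\text{Cl}^1(\Phi)$ no es una subálgebra (de hecho $\text{Cl}^1(\Phi)\,\text{Cl}^1(\Phi)\subseteq\text{Cl}^0(\Phi)$).

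Para la segunda afirmación escribiría un elemento cualquiera de $\text{Cl}(\Phi)$ como combinación lineal finita de productos $i(v_1)\cdots i(v_k)$ de vectores —lícito porque $\{\mathbf 1,i(\mathbb V)\}$ genera—, diciendo que el producto es \emph{par} o \emph{impar} según la paridad de $k$. Si $W_0$ y $W_1$ son los subespacios generados por los productos pares e impares respectivamente, esto dice $\text{Cl}(\Phi)=W_0+W_1$. La inclusión fácil $W_0\subseteq\text{Cl}^0(\Phi)$ y $W_1\subseteq\text{Cl}^1(\Phi)$ es consecuencia de $\alpha(i(v_1)\cdots i(v_k))=(-1)^k\,i(v_1)\cdots i(v_k)$ y la linealidad de $\alpha$. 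Para la recíproca, dado $a\in\text{Cl}^0(\Phi)$ lo descompondría como $a=b+c$ con $b\in W_0$, $c\in W_1$; entonces $c=a-b\in\text{Cl}^0(\Phi)$ y a la vez $c\in W_1\subseteq\text{Cl}^1(\Phi)$, y como la suma de \eqref{Z2grading} es directa se tiene $\text{Cl}^0(\Phi)\cap\text{Cl}^1(\Phi)=\{0\}$, de donde $c=0$ y $a=b\in W_0$. Así $\text{Cl}^0(\Phi)=W_0$ y, simétricamente, $\text{Cl}^1(\Phi)=W_1$. Para hacer literal lo de ``todos los sumandos de $a$'' cuando $\mathbb V$ es de dimensión finita, fijaría una base ortogonal $\{\tilde e_1,\dots,\tilde e_n\}$ y, vía $i(\tilde e_j)i(\tilde e_k)=-i(\tilde e_k)i(\tilde e_j)$ para $j\neq k$ y $i(\tilde e_j)^2=\Phi(\tilde e_j)\mathbf 1$, reduciría todo producto de vectores a un múltiplo escalar de algún $e_S:=i(\tilde e_{j_1})\cdots i(\tilde e_{j_r})$ con $S=\{j_1,\dots,j_r\}$, $j_1<\dots<j_r$; como $e_S\in\text{Cl}^{|S|\bmod 2}(\Phi)$, escribir $a$ como combinación lineal de los $e_S$ y aplicar lo anterior muestra que $a\in\text{Cl}^0(\Phi)$ si y sólo si todos los $e_S$ que aparecen tienen $|S|$ par, y análogamente para $\text{Cl}^1(\Phi)$.

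Lo rutinario será la contabilidad con autoespacios; el punto que requiere cuidado es la inclusión recíproca de la segunda parte, donde lo que impide que un elemento fijo por $\alpha$ oculte una componente impar es exactamente que la suma $\text{Cl}(\Phi)=\text{Cl}^0(\Phi)\oplus\text{Cl}^1(\Phi)$ sea \emph{directa}. Si se pretendiera además que los coeficientes de $a=\sum_S c_S\,e_S$ queden unívocamente determinados, haría falta la independencia lineal de los $e_S$ —es decir, el teorema todavía no probado de que $\dim\text{Cl}(\Phi)=2^n$—, pero para el enunciado tal como está alcanza con el argumento de suma directa.
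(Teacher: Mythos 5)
El artículo enuncia esta observación sin demostración alguna, de modo que no hay prueba del texto con la cual contrastar; tu argumento es correcto y es esencialmente el estándar. La primera parte sale, como indicas, de que $\text{Cl}^0(\Phi)$ y $\text{Cl}^1(\Phi)$ son los autoespacios de $\alpha$ junto con $\alpha(xy)=\alpha(x)\alpha(y)$ y $\alpha(\mathbf 1)=\mathbf 1$; la segunda descansa exactamente en el punto que identificas: como la suma \eqref{Z2grading} es directa (si $x=\alpha(x)=-x$ entonces $x=0$ en característica cero), la componente impar de un elemento de $\text{Cl}^0(\Phi)$ se anula, y así $\text{Cl}^0(\Phi)=W_0$ y $\text{Cl}^1(\Phi)=W_1$. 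También es acertado tu comentario final de que la unicidad de los coeficientes en la base estándar requeriría el teorema de dimensión $2^n$ (teorema \ref{baseClifford}, probado recién después), pero que para el enunciado tal como está basta el argumento de suma directa.
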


Dadas dos álgebras $\mathbb{Z}_2$-graduadas $A=A^0\oplus{A^1}$ y $B=B^0\oplus{B^1}$, se define el producto tensorial $\mathbb{Z}_2$ graduado por:
\begin{equation}
A\hat{\otimes}B:=(A\hat{\otimes}B)^0\oplus(A\hat{\otimes}B)^1,
\end{equation} 
con
\begin{equation}
(A\hat{\otimes}B)^0:=(A^0{\otimes}B^0)\oplus(A^1{\otimes}B^1) \ \ \ ; \ \ \ (A\hat{\otimes}B)^1:=(A^0{\otimes}B^1)\oplus(A^1{\otimes}B^0).
\end{equation}
El producto que definimos en $A\hat{\otimes}B$ está dado por:
\begin{equation}
(a'\otimes{b})(a\otimes{b'})=(-1)^{ij}(aa')\otimes(bb'),
\end{equation}
donde $a\in{A}^i$, $b\in{B}^j$. Denotamos $i=\deg(a)$, $j=\deg(b)$. El producto se extiende a elementos $a$ y $b$ no homogéneos (esto es, no contenidos en una única componente de la graduación $\mathbb{Z}_2$) por bilinealidad de $\otimes$. Se puede ver que con esta regla de multiplicación el conjunto $A\hat\otimes{B}$ es un álgebra $\mathbb{Z}_2$-graduada.

Ahora que ya han sido introducidos los conceptos de graduación $\mathbb{Z}_2$ estamos en condiciones de enunciar la siguiente proposición, que tiene una importancia central en la demostración del teorema que nos da la dimensión del álgebra de Clifford en el caso de dimensión finita de $\mathbb{V}$.

\begin{prop}
Sean $\mathbb{V}$ y $\mathbb{W}$ dos espacios cuadráticos reales de dimensión finita con formas cuadráticas $\Phi$ y $\Psi$ respectivamente, existe una forma cuadrática $\Phi\oplus\Psi$ definida en $\mathbb{V}\oplus\mathbb{W}$ por:
\begin{equation}
\Phi\oplus\Psi(v,w)=\Phi(v)+\Psi(w).
\end{equation}
Si llamamos $i:=i_\Phi: \mathbb{V}\rightarrow\text{Cl}(\Phi)$, $j:=i_\Psi:\mathbb{W}\rightarrow\text{Cl}(\Psi)$, se puede definir la transformación lineal:
\begin{equation}
\begin{split}
f:\mathbb{V}\oplus\mathbb{W}&\rightarrow\text{Cl}(\Phi)\hat\otimes{\text{Cl}(\Psi)},\\
(v,w)&\mapsto{i(v)\otimes{1}+1\otimes{j(w)}},
\end{split}
\end{equation}
y ésta induce un isomorfismo $\overline{f}:\text{Cl}(\Phi\oplus\Psi)\rightarrow\text{Cl}(\Phi)\hat\otimes{\text{Cl}(\Psi)}$
\end{prop}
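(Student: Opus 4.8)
El plan es exhibir explícitamente un inverso de $\overline{f}$, ya que no puede invocarse todavía la fórmula $\dim\text{Cl}=2^{n}$ (de hecho esta proposición se usará precisamente para deducirla). Primero verificaría que $f$ es una función de Clifford para $\Phi\oplus\Psi$: como $i(v)\in\text{Cl}^{1}(\Phi)$ y $j(w)\in\text{Cl}^{1}(\Psi)$ son elementos impares, la regla del producto en $\hat\otimes$ da $(1\otimes j(w))(i(v)\otimes 1)=(-1)^{1\cdot 1}\,i(v)\otimes j(w)=-\,i(v)\otimes j(w)$, mientras que $(i(v)\otimes 1)(1\otimes j(w))=i(v)\otimes j(w)$, de modo que al desarrollar $f(v,w)^{2}$ los términos cruzados se cancelan y queda $f(v,w)^{2}=i(v)^{2}\otimes 1+1\otimes j(w)^{2}=(\Phi(v)+\Psi(w))\,(1\otimes 1)=(\Phi\oplus\Psi)(v,w)\,(1\otimes 1)$. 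Por la propiedad universal del álgebra de Clifford (Definición \ref{algebradeclifford}), $f$ se extiende de manera única a un morfismo de álgebras $\overline{f}:\text{Cl}(\Phi\oplus\Psi)\rightarrow\text{Cl}(\Phi)\hat\otimes\text{Cl}(\Psi)$.

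Para construir el inverso, observaría que las inclusiones $\mathbb{V}\hookrightarrow\mathbb{V}\oplus\mathbb{W}$, $v\mapsto(v,0)$, y $\mathbb{W}\hookrightarrow\mathbb{V}\oplus\mathbb{W}$, $w\mapsto(0,w)$, cumplen $(\Phi\oplus\Psi)(v,0)=\Phi(v)$ y $(\Phi\oplus\Psi)(0,w)=\Psi(w)$, de modo que $v\mapsto i_{\Phi\oplus\Psi}(v,0)$ y $w\mapsto i_{\Phi\oplus\Psi}(0,w)$ son funciones de Clifford para $\Phi$ y $\Psi$ respectivamente; inducen entonces morfismos de álgebras $\lambda:\text{Cl}(\Phi)\rightarrow\text{Cl}(\Phi\oplus\Psi)$ y $\mu:\text{Cl}(\Psi)\rightarrow\text{Cl}(\Phi\oplus\Psi)$, que además respetan la graduación $\mathbb{Z}_{2}$ (envían $i(\mathbb{V})$, resp.\ $j(\mathbb{W})$, a elementos impares). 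La aplicación $(x,y)\mapsto\lambda(x)\mu(y)$ es bilineal, así que factoriza por una transformación lineal $g:\text{Cl}(\Phi)\otimes\text{Cl}(\Psi)\rightarrow\text{Cl}(\Phi\oplus\Psi)$ (como espacios vectoriales $\text{Cl}(\Phi)\hat\otimes\text{Cl}(\Psi)=\text{Cl}(\Phi)\otimes\text{Cl}(\Psi)$).

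La parte central, y el principal obstáculo, es probar que $g$ es morfismo de álgebras para el producto \emph{torcido}. La clave es que, para $v\in\mathbb{V}$ y $w\in\mathbb{W}$, los vectores $(v,0)$ y $(0,w)$ son ortogonales en $(\mathbb{V}\oplus\mathbb{W},\Phi\oplus\Psi)$, luego sus imágenes por $i_{\Phi\oplus\Psi}$ anticonmutan en $\text{Cl}(\Phi\oplus\Psi)$; de ahí, usando que el grado es multiplicativo, se deduce la relación de super-conmutación $\mu(y)\lambda(x)=(-1)^{ij}\lambda(x)\mu(y)$ para $x\in\text{Cl}^{i}(\Phi)$, $y\in\text{Cl}^{j}(\Psi)$. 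Con ella, para elementos homogéneos se tiene $g\big((x\otimes y)(x'\otimes y')\big)=(-1)^{\deg(x')\deg(y)}\lambda(xx')\mu(yy')=\lambda(x)\mu(y)\lambda(x')\mu(y')=g(x\otimes y)\,g(x'\otimes y')$, y el caso general sigue por bilinealidad. La dificultad fina consiste en organizar este cómputo: verificar la anticonmutación primero en los generadores $i_{\Phi\oplus\Psi}(v,0)$, $i_{\Phi\oplus\Psi}(0,w)$ y luego propagarla, con los signos correctos, a elementos homogéneos arbitrarios de cada componente.

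Por último comprobaría que $g\circ\overline{f}=\text{id}$ y $\overline{f}\circ g=\text{id}$. Como ambas composiciones son morfismos de álgebras, basta verificarlo sobre generadores: $g\big(\overline{f}(i_{\Phi\oplus\Psi}(v,w))\big)=g(i(v)\otimes 1+1\otimes j(w))=\lambda(i(v))+\mu(j(w))=i_{\Phi\oplus\Psi}(v,0)+i_{\Phi\oplus\Psi}(0,w)=i_{\Phi\oplus\Psi}(v,w)$, de donde $g\circ\overline{f}=\text{id}$ por la unicidad en la propiedad universal; y como $\overline{f}\circ\lambda$ coincide sobre $i(\mathbb{V})$ con el morfismo $x\mapsto x\otimes 1$ (y $\overline{f}\circ\mu$ con $y\mapsto 1\otimes y$), resulta $\overline{f}(g(x\otimes y))=(x\otimes 1)(1\otimes y)=x\otimes y$ sobre los generadores $x\otimes y$. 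Esto prueba que $\overline{f}$ es un isomorfismo de álgebras.
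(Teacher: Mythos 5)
Tu propuesta es correcta y sigue esencialmente el mismo camino que la prueba del texto: verificar que $f$ es función de Clifford usando la cancelación de los términos cruzados por la regla de signos de $\hat\otimes$, construir el inverso a partir de los morfismos inducidos por las inclusiones $\mathbb{V},\mathbb{W}\hookrightarrow\mathbb{V}\oplus\mathbb{W}$, probar que es morfismo mediante la super-conmutación que proviene de la ortogonalidad de $(v,0)$ y $(0,w)$, y verificar ambas composiciones sobre generadores apelando a la unicidad de la propiedad universal. No hay diferencias sustanciales de enfoque.
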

\begin{proof}
Notemos que $f(v,w)^2=\Phi\oplus\Psi(v,w)1\otimes1$. En efecto,
\begin{equation}
\begin{gathered}
f(v,w)^2=(i(v)\otimes1+1\otimes{j(w)})(i(v)\otimes1+1\otimes{j(w)})=\\
=(-1)^{0\times{1}}(i(v)^2\otimes{1})+(-1)^{0\times{0}}(i(v)\otimes{j(w)})+(-1)^{1\times{1}}(i(v)j(w))+(-1)^{0\times{1}}(1\otimes{j(w)}^2)=\\
=i(v)^2\otimes{1}+i(v)\otimes{j(w)}-i(v)j(w)+1\otimes{j(w)^2}=\\
=i(v)^2\otimes{1}+1\otimes{j(w)^2}=\Phi(v)1\otimes{1}+\Psi(w)1\otimes{1}=\\
=(\Phi(v)+\Psi(w))1\otimes{1}=\\
=\Phi\oplus\Psi(v,w)1\otimes1,
\end{gathered}
\end{equation}
con lo cual tenemos el siguiente diagrama conmutativo:
\[
\begin{tikzcd}[sep=5em]
\mathbb{V}\oplus{\mathbb{W}} \arrow{r}{i_{\Phi\oplus\Psi}} \arrow{rd}{f} 
  & \text{Cl}(\Phi\oplus\Psi) \ar[dashed]{d}{\tilde{f}} \\
    & \text{Cl}(\Phi)\hat\otimes\text{Cl}(\Psi)
\end{tikzcd},
\]
con $\tilde{f}$ única satisfaciendo $f=\tilde{f}\circ{i_{\Phi\oplus\Psi}}$. Para ver que $\tilde{f}$ es isomorfismo basta con encontrar un morfismo inverso $\tilde{f'}$:
\begin{equation}
\tilde{f'}: \text{Cl}(\Phi)\hat\otimes\text{Cl}(\Psi)\rightarrow \text{Cl}(\Phi\oplus\Psi).
\end{equation}
Sean las inclusiones $\mathbb{V}\overset{i_1}{\hookrightarrow}\mathbb{V}\oplus\mathbb{W}$ y $\mathbb{W}\overset{i_2}{\hookrightarrow}\mathbb{V}\oplus\mathbb{W}$, éstas inducen los morfismos:
\begin{equation}
\begin{gathered}
\phi:\text{Cl}(\Phi)\rightarrow\text{Cl}(\Phi\oplus\Psi),\\
\psi:\text{Cl}(\Psi)\rightarrow\text{Cl}(\Phi\oplus\Psi).\\
\end{gathered}
\end{equation}
En efecto, si tomamos $g:\mathbb{V}\rightarrow\text{Cl}(\Phi\oplus\Psi)$, dada por $g:=i_{\Phi\oplus\Psi}\circ{i_1}$, se cumple que:
\begin{equation}
g(v)^2=[i_{\Phi\oplus\Psi}(i_1(v))]^2=(i_{\Phi\oplus\Psi}(v,0))^2=\Phi\oplus\Psi(v,0)1=\Phi(v)1,
\end{equation}
con lo cual tenemos el siguiente diagrama conmutativo:
\[
\begin{tikzcd}[sep=5em]\label{diagramag}
\mathbb{V}\oplus{\mathbb{W}} \arrow{r}{i_{\Phi}} \arrow{rd}{g} 
  & \text{Cl}(\Phi) \ar[dashed]{d}{\phi} \\
    & \text{Cl}(\Phi\oplus\Psi)
\end{tikzcd},
\]
donde el $\phi$ es único y $g=\phi\circ{i_{\Phi}}$. De manera análoga, proponiendo $\tilde{g}=i_{\Phi\oplus\Psi}\circ{i_2}$, se obtiene el morfismo $\psi$.
Definimos la función $\tilde{f'}$:
\begin{equation}
\begin{split}
\tilde{f'}:\text{Cl}(\Phi)\hat\otimes\text{Cl}(\Psi)&\rightarrow\text{Cl}(\Phi\oplus\Psi),\\
a\otimes{b}&\mapsto{\phi(a)\psi(b)}
\end{split}
\end{equation}
para $a\in\text{Cl}(\Phi)$ y $b\in\text{Cl}(\Psi)$ y se extiende a elementos arbitrarios en $\text{Cl}(\Phi)\hat\otimes\text{Cl}(\Psi)$ de manera que sea transformación lineal. Veremos que esta función es un morfismo de álgebras y que es la función inversa de $\tilde{f}$ que estamos buscando.

Por definición $\tilde{f'}$ es lineal, resta ver que es morfismo de anillos con $\tilde{f'}(1_{\text{Cl}(\Phi)\hat\otimes\text{Cl}(\Psi)})=1_{\text{Cl}(\Phi\oplus\Psi)}$. La segunda condición se cumple puesto que $\phi$ y $\psi$ son morfismos de álgebras, con lo cual $\tilde{f'}(1\otimes{1})=\phi(1)\psi(1)=1\times1=1$.

Ver que es morfismo de anillos no es tan simple. Veamos cómo actúa $\tilde{f'}$ sobre un elemento de la forma $(a\otimes{b})(c\otimes{d})$, donde $b\in{\text{Cl}(\Phi)^{i}}$ y $c\in{\text{Cl}(\Psi)^{j}}$ para algún par $(i,j)$ con $i,j\in\{0,1\}$.
\begin{equation}
\begin{gathered}
\tilde{f'}((a\otimes{b})(c\otimes{d}))=\tilde{f'}((-1)^{\deg{b}\deg{c}}(ac\otimes{bd}))=\\
=(-1)^{\deg{b}\deg{c}}\phi(ac)\psi(bd)=(-1)^{\deg{b}\deg{c}}\phi(a)\phi(c)\psi(b)\psi(d).
\end{gathered}
\end{equation}
Como $\mathbb{V}$ y $\mathbb{W}$ son de dimensión finita, cada elemento es combinación lineal de productos finitos en $i(\mathbb{V})$ o $j(\mathbb{W})$. Veamos qué sucede con $\phi(c)\psi(b)$ para $c=i(v_1)...i(v_k)$ y $\nobreak{b=j(w_1)...j(w_m)}$:
\begin{equation}\label{productoelementos}
\begin{gathered}
\phi(c)\psi(b)=\phi(i(v_1)...i(v_k))\psi(j(w_1)...j(w_m))=\phi(i(v_1))...\phi((v_k))\psi(j(w_1))...\psi(j(w_m))=\\
=i_{\Phi\oplus\Psi}(v_1,0)...i_{\Phi\oplus\Psi}(v_k,0)i_{\Phi\oplus\Psi}(0,w_1)...i_{\Phi\oplus\Psi}(0,w_m).
\end{gathered}
\end{equation}
Notemos que $v_l$ y $w_s$ son ortogonales $\forall{l,s}$ en el sentido en que $(v_l,0)$ es ortogonal a $(0,w_s)$ respecto de la forma bilineal $\langle,\rangle$ inducida por $\Phi\oplus\Psi$. En efecto,
\begin{equation}
\langle{(v,0),(0,w)}\rangle=\Phi\oplus\Psi(v,w)-\Phi\oplus\Psi(v,0)-\Phi\oplus\Psi(0,w)=\Phi(v)+\Psi(w)-\Phi(v)-\Psi(w)=0,
\end{equation}
con lo cual $i_{\Phi\oplus\Psi}(v,0)i_{\Phi\oplus\Psi}(0,w)=- i_{\Phi\oplus\Psi}(0,w)i_{\Phi\oplus\Psi}(v,0)$. Esto implica que en la ecuación \ref{productoelementos} podemos conmutar sucesivamente cada $i(0,w_s)$ con los $i(v_l,0)$ multiplicando por un $-1$ en cada intercambio. Haciendo esto se obtiene:
\begin{equation}
\phi(c)\psi(b)=(-1)^{k\times{m}}\psi(b)\phi(c)=(-1)^{\deg{b}\deg{b}}\psi(b)\phi(c),
\end{equation}
con lo cual se cumple:
\begin{equation}
\begin{gathered}
\tilde{f'}((a\otimes{b})(c\otimes{d}))=(-1)^{\deg{b}\deg{c}}(-1)^{\deg{b}\deg{c}}\phi(a)\psi(b)\phi(c)\psi(d)=\\
(-1)^{2\deg{b}\deg{c}}\phi(a)\psi(b)\phi(c)\psi(d)=\phi(a)\psi(b)\phi(c)\psi(d)=\tilde{f'}(a\otimes{b})\tilde{f'}(c\otimes{d}).
\end{gathered}
\end{equation}
Dado que todo elemento se puede escribir como suma de elementos de la forma que hemos consuderado en esta prueba, tenemos que $\tilde{f'}(a.b)=\tilde{f'}(a)\tilde{f'}(b)$ para todos $a,b\in{\text{Cl}(\Phi)\hat\otimes\text{Cl}(\Psi)}$.

Con esto concluimos la prueba de que $\tilde{f'}$ es un morfismo entre las álgebras $\text{Cl}(\Phi)\hat\otimes\text{Cl}(\Psi)$ y $\text{Cl}(\Phi\oplus\Psi)$. Resta ver que $\tilde{f'}$ es la inversa de $\tilde{f}$, esto es:
\begin{equation}
\begin{gathered}
\tilde{f'}\circ{\tilde{f}}=\text{id}:\text{Cl}(\Phi\oplus\Psi)\rightarrow\text{Cl}(\Phi\oplus\Psi),\\
\tilde{f}\circ{\tilde{f'}}=\text{id}:\text{Cl}(\Phi)\hat\otimes\text{Cl}(\Psi)\rightarrow\text{Cl}(\Phi)\hat\otimes\text{Cl}(\Psi).
\end{gathered}
\end{equation}
Notemos que:
\begin{equation}
\begin{gathered}
(\tilde{f'}\circ{\tilde{f}})(i_{\Phi\oplus\Psi})(v,w)=\tilde{f'}(\tilde{f}(v,w))=\tilde{f'}(i(v)\otimes{1}+1\otimes{j(w)})=\\
=\phi(i(v))+\psi(j(w))=i_{\Phi\oplus\Psi}(v,0)+i_{\Phi\oplus\Psi}(0,w)=i_{\Phi\oplus\Psi}(v,w),
\end{gathered}
\end{equation}
esto es $(\tilde{f'}\circ\tilde{f})\circ{i_{\Phi\oplus\Psi}}={i_{\Phi\oplus\Psi}}$, pero recordemos que la identidad era el único morfismo que cumplía con esto, con lo cual $\tilde{f'}\circ\tilde{f}=\text{id}$.

Para ver que $\tilde{f}\circ{\tilde{f'}}:\text{Cl}(\mathbb{V})\hat\otimes\text{Cl}(\mathbb{W})\rightarrow\text{Cl}(\mathbb{V})\hat\otimes\text{Cl}(\mathbb{W})$ notemos que los siguientes diagramas conmutan:
\[
\begin{tikzcd}[sep=5em]
\mathbb{V} \arrow{r}{i_{\Phi}} \arrow{rd}{i_{\Phi}\otimes{1_{\text{Cl}(\Psi)}}} 
  & \text{Cl}(\Phi) \ar[dashed]{d}{h} \\
    & \text{Cl}(\Phi)\hat\otimes\text{Cl}(\Psi)
\end{tikzcd},
\begin{tikzcd}[sep=5em]
{\mathbb{W}} \arrow{r}{i_{\Phi}} \arrow{rd}{{1_{\text{Cl}(\Phi)}}\otimes{i_{\Psi}}}
  & \text{Cl}(\Psi) \ar[dashed]{d}{h'} \\
    & \text{Cl}(\Phi)\hat\otimes\text{Cl}(\Psi)
\end{tikzcd}.
\]
Como el morfismo $h$ es único y $h:\text{Cl}(\Phi)\rightarrow\text{Cl}(\Phi)\hat\otimes\text{Cl}(\Psi)$ dado por $a\mapsto{a\otimes{1_{\text{Cl}(\Psi)}}}$ cumple con el diagrama, entonces este es el morfismo $h$ buscado. Análogamente $h'(b)=1_{\text{Cl}(\Phi)}\otimes{b}$.

Por otro lado, notemos que:
\begin{equation}
\begin{gathered}
(\tilde{f}\circ\tilde{f'}\circ{h})\circ{i_\Phi}(v)=(\tilde{f}\circ\tilde{f'})\circ(h\circ{i_\Phi}(v))=(\tilde{f}\circ\tilde{f'})(i_\Phi(v)\otimes{1})=\\
=\tilde{f}(\phi(i_{\Phi}(v)))=\tilde{f}(g(v))=\tilde{f}(i_{\Phi\oplus\Psi}(v,0))=f(v,0)=i_{\Phi}(v)\otimes{1},
\end{gathered}
\end{equation}
donde se usó el diagrama \ref{diagramag}.
Esto es $\tilde{f}\circ\tilde{f'}\circ{h}=h$, pero como $\text{Im}(h)=\text{Cl}(\Phi)\otimes{1}$ entonces podemos decir que:
\begin{equation}
\tilde{f}\circ{\tilde{f'}}|_{\text{Cl}(\Phi)\otimes{1}}=\text{id}:\text{Cl}(\Phi)\otimes{1}\rightarrow\text{Cl}(\Phi)\otimes{1}.
\end{equation}
Análogamente se obtiene $\tilde{f}\circ{\tilde{f'}}|_{{1}\otimes\text{Cl}(\Phi)}=\text{id}:{1}\otimes\text{Cl}(\Phi)\rightarrow{1}\otimes\text{Cl}(\Phi)$.

Entonces, sean $a\in{\text{Cl}(\Phi)}$ y $b\in{\text{Cl}(\Psi)}$, entonces $a\otimes{b}=(a\otimes{1})(1\otimes{b})\in{\text{Cl}(\Phi)\hat\otimes\text{Cl}(\Psi)}$. Como $\tilde{f}\circ\tilde{f'}$ es morfismo entonces tenemos:
\begin{equation}
(\tilde{f}\circ\tilde{f'})(a\otimes{b})=(\tilde{f}\circ\tilde{f'})((a\otimes{1})({1}\otimes{b}))=\tilde{f}\circ\tilde{f'}(a\otimes{1})\tilde{f}\circ\tilde{f'}(1\otimes{b})=(a\otimes{1})(1\otimes{b})=a\otimes{b}.
\end{equation}

Todo elemento en $\text{Cl}(\Phi)\hat\otimes{\text{Cl}(\Psi)}$ se escribe como combinación lineal de los elementos considerados en la ecuación previa, con lo cual tenemos que $\tilde{f}\circ\tilde{f'}=\text{id}:\text{Cl}(\Phi)\hat\otimes{\text{Cl}(\Psi)}\rightarrow\text{Cl}(\Phi)\hat\otimes{\text{Cl}(\Psi)}$. Así $\tilde{f}$ es isomorfismo y $\text{Cl}(\Phi\oplus\Psi)\simeq\text{Cl}(\Phi)\hat\otimes{\text{Cl}(\Psi)}$.
\end{proof}

\begin{teo}\label{baseClifford}
Dado un espacio cuadrático real de dimensión finita $(\mathbb{V},\Phi)$ entonces se cumple que la transformación lineal $i:\mathbb{V}\rightarrow\text{Cl}(\Phi)$ es inyectiva. Además dada una base $\{e_1,...e_n\}$ de $\mathbb{V}$, los $2^n-1$ productos:
\begin{equation}
i(e_{i_1})...i(e_{i_k})\ \ \text{con} \ \ 1\leq{i_1}<i_2<...<i_k\leq{n},
\end{equation}
y la unidad $\bf{1}$ forman una base de $\text{Cl}(\Phi)$, con lo cual el álgebra tiene dimensión $2^n$ sobre $\mathbb{R}$. Esta base se denomina \emph{base estándar de $\text{Cl}(\Phi)$ asociada a la base $\{e_1,...e_n\}$}
\end{teo}

\begin{proof}
La demostración se hará por inducción sobre $n$, la dimensión del espacio vectorial $\mathbb{V}$.

Si n=1, como ya vimos $T(\mathbb{V})\simeq{\mathbb{R}[x]}$, con $x=e_1$ base de $\mathbb{V}$. Ya vimos también en el ejemplo \ref{ejemplos-algebrasdeclifford} que esta álgebra es isomorfa a $\mathbb{C}$ o a $\mathbb{R}\oplus\mathbb{R}$, con lo cual ya se ve que son de dimensión $2=2^1$ y como el elemento $i_{\Phi}(e_1)$ es parte de la base, se ve que la función $i_\Phi$ es inyectiva.

Sea $n=\dim{\mathbb{V}}$, supongamos que el teorema es válido para todos los espacios vectoriales de dimensión menor a $n$. Todo espacio cuadrático admite base ortogonal, con lo cual tenemos la descomposición:
\begin{equation}
\mathbb{V}=\mathbb{R}e_1\oplus\mathbb{V}',
\end{equation}
con $\dim{\mathbb{V}'}=n-1$ y $e_1\perp{\mathbb{V}'}$. Debido a esto, la forma cuadrática $\Phi$ se descompone como $\Phi=\Phi_1\oplus\Phi_{\mathbb{V}'}$ y por lo tanto estamos bajo las condiciones de la proposición previa. Esto implica entonces el siguiente isomorfismo:
\begin{equation}
\text{Cl}(\Phi)=\text{Cl}(\Phi_1\oplus\Phi_{\mathbb{V}'})\simeq\text{Cl}(\Phi_1)\hat\otimes\text{Cl}(\Phi_{\mathbb{V}'}).
\end{equation}
Por hipótesis inductiva sabemos que $\text{Cl}(\Phi_{\mathbb{V}'})$ tiene por base al conjunto:
\begin{equation}
i(e_{i_1})...i(e_{i_k})\ \ \text{con} \ \ 2\leq{i_1}<i_2<...<i_k\leq{n} \ \ y \ \ {\bf{1}}_{\text{Cl}(\Phi_{\mathbb{V}'})},
\end{equation}
con lo cual, como además ${\bf{1}}_{\text{Cl}(\Phi_1)}$ y $i_{\Phi_1}(e_1)$ forman una base de $\text{Cl}(\Phi_1)$, tenemos que el siguiente conjunto forma una base de $\text{Cl}(\Phi_1)\hat\otimes\text{Cl}(\Phi_{\mathbb{V}'})$:
\begin{equation}\label{baseprodtensor}
\begin{gathered}
{\bf{1}}_{\text{Cl}(\Phi_1)}\otimes{i(e_{i_1})...i(e_{i_k})}\ \ \text{con} \ \ 2\leq{i_1}<i_2<...<i_k\leq{n}\\
{\bf{1}}_{\text{Cl}(\Phi_1)}\otimes{\bf{1}}_{\text{Cl}(\Phi_{\mathbb{V}'})}\\
i(e_1)\otimes{i(e_{i_1})...i(e_{i_k})}\ \ \text{con} \ \ 2\leq{i_1}<i_2<...<i_k\leq{n}\\
i(e_1)\otimes{\bf{1}}_{\text{Cl}(\Phi_{\mathbb{V}'})}.
\end{gathered}
\end{equation}
Recordemos que el isomorfismo entre $\text{Cl}(\Phi_1)\hat\otimes\text{Cl}(\Phi_{\mathbb{V}'})$ y $\text{Cl}(\Phi_1\oplus\Phi_{\mathbb{V}'})$ está dado por: $a\otimes{b}\mapsto{\phi(a)\psi(b)}$. Como los elementos de la ecuación \ref{baseprodtensor} son una base de $\text{Cl}(\Phi_1)\hat\otimes\text{Cl}(\Phi_{\mathbb{V}'})$, las imágenes de estos elementos vía el isomorfismo descripto conformarán una base de $\text{Cl}(\Phi_1\oplus\Phi_{\mathbb{V}'})=\text{Cl}(\Phi)$. Se puede ver entonces que estos elementos son enviados por el isomorfismo al siguiente conjunto:
\begin{equation}
i(e_{i_1})...i(e_{i_k})\ \ \text{con} \ \ 1\leq{i_1}<i_2<...<i_k\leq{n}\ \ \text{y} \ \ {\bf{1}},
\end{equation}
como queríamos demostrar. Además claramente $i$ es inyectiva puesto que los elementos $i(e_{i})$ con $i\in\{1,...,n\}$ son parte de la base.
\end{proof}

\begin{obs}
El hecho de que mapeo $i$ sea inyectivo nos permite referirnos a los vectores $v\in\mathbb{V}$ como si fueran elementos del álgebra de Clifford. Es por esto que de aquí en adelante utilizaremos la notación $v:=i(v)$ para todo $v\in\mathbb{V}$, es decir identificamos a $v$ con su imagen por $i$. En este sentido decimos que $\mathbb{V}\subset{\text{Cl}(\Phi)}$.

Notemos que dada $\{e_1,...,e_n\}$ base ortogonal de $\mathbb{V}$ entonces el álgebra de Clifford está presentada por los generadores  $\{e_1,...,e_n\}$ y las relaciones:
\begin{equation}
\begin{gathered}
e_j^2=\Phi(e_j)1\\
e_je_k=-e_ke_j \ \ \forall{j\neq{k}}
\end{gathered}
\end{equation}
\end{obs}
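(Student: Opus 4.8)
El plan es proceder por inducción sobre $n=\dim\mathbb{V}$, apoyándose de manera esencial en el isomorfismo $\text{Cl}(\Phi\oplus\Psi)\cong\text{Cl}(\Phi)\,\hat\otimes\,\text{Cl}(\Psi)$ establecido en la proposición anterior. Con ese isomorfismo disponible, casi todo el peso conceptual ya está resuelto y lo único que resta es transportar bases con cuidado.

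Para el caso base $n=1$ tomo una base $\{e_1\}$ de $\mathbb{V}$; como $T(\mathbb{R})\cong\mathbb{R}[x]$ con $x=e_1$, el álgebra de Clifford es el cociente $\mathbb{R}[x]/\langle x^2-\Phi(e_1){\bf 1}\rangle$, y el algoritmo de división en $\mathbb{R}[x]$ muestra que $\{\overline{1},\overline{x}\}$ es una $\mathbb{R}$-base de ese cociente para cualquier valor de $\Phi(e_1)$ —incluido el caso isotrópico $\Phi(e_1)=0$, además de los dos casos anisotrópicos ya discutidos en el ejemplo \ref{ejemplos-algebrasdeclifford}. Luego $\dim\text{Cl}(\Phi)=2=2^1$ y, como $i(e_1)=\overline{x}\neq 0$, la función $i$ es inyectiva.

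Para el paso inductivo supongo el enunciado válido en toda dimensión $<n$. Elijo una base ortogonal $\{e_1,\dots,e_n\}$ de $\mathbb{V}$ (existe por lo recordado en la Sección \ref{espacioscuad}) y escribo $\mathbb{V}=\mathbb{R}e_1\oplus\mathbb{V}'$ con $\mathbb{V}'=\langle e_2,\dots,e_n\rangle$ y $e_1\perp\mathbb{V}'$; esto descompone la forma como $\Phi=\Phi_1\oplus\Phi_{\mathbb{V}'}$, de modo que la proposición anterior da un isomorfismo lineal $\text{Cl}(\Phi)\cong\text{Cl}(\Phi_1)\,\hat\otimes\,\text{Cl}(\Phi_{\mathbb{V}'})$. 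Por el caso base $\{{\bf 1},i(e_1)\}$ es base de $\text{Cl}(\Phi_1)$ y por hipótesis inductiva $\{{\bf 1}\}\cup\{i(e_{i_1})\cdots i(e_{i_k}):2\le i_1<\cdots<i_k\le n\}$ es base de $\text{Cl}(\Phi_{\mathbb{V}'})$; por lo tanto los $2\cdot 2^{n-1}=2^n$ productos tensoriales de un elemento de la primera base por uno de la segunda forman una base (como espacio vectorial) del producto tensorial $\mathbb{Z}_2$-graduado, y el isomorfismo lineal anterior la transporta a una base de $\text{Cl}(\Phi)$.

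El paso que hay que llevar a cabo con cuidado, aunque es rutinario, es identificar esas imágenes. Bajo el morfismo $a\otimes b\mapsto\phi(a)\psi(b)$ de la proposición anterior, con $\phi$ y $\psi$ morfismos de álgebras que cumplen $\phi(i(e_1))=i(e_1)$ y $\psi(i(e_j))=i(e_j)$ para $j\ge 2$, los cuatro bloques de la base tensorial se envían respectivamente a ${\bf 1}$, a los $i(e_{i_1})\cdots i(e_{i_k})$ con $2\le i_1<\cdots<i_k\le n$, a $i(e_1)$, y a los $i(e_1)i(e_{i_1})\cdots i(e_{i_k})$ con $2\le i_1<\cdots<i_k\le n$; como en este último bloque $1<i_1$, los índices quedan ya en orden estrictamente creciente, y la unión de los cuatro bloques es exactamente $\{{\bf 1}\}\cup\{i(e_{i_1})\cdots i(e_{i_k}):1\le i_1<\cdots<i_k\le n\}$, la base estándar buscada. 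En particular $i(e_1),\dots,i(e_n)$ figuran entre los elementos de esa base, luego son linealmente independientes e $i$ es inyectiva. No espero ningún obstáculo serio en esta prueba: el único punto a vigilar es que ni el caso base ni la proposición anterior requieren hipótesis de regularidad, de modo que el argumento cubre también espacios cuadráticos degenerados.
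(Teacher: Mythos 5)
Tu bosquejo es, en esencia, una nueva demostración del Teorema \ref{baseClifford} por exactamente la misma vía que el texto: inducción sobre $n$, descomposición ortogonal $\mathbb{V}=\mathbb{R}e_1\oplus\mathbb{V}'$, el isomorfismo $\text{Cl}(\Phi_1\oplus\Phi_{\mathbb{V}'})\cong\text{Cl}(\Phi_1)\,\hat\otimes\,\text{Cl}(\Phi_{\mathbb{V}'})$ y el transporte de la base del producto tensorial graduado mediante $a\otimes b\mapsto\phi(a)\psi(b)$. El único punto en que vas más allá del texto es el caso base, donde tratas explícitamente la posibilidad $\Phi(e_1)=0$ en lugar de remitir sólo a los ejemplos anisotrópicos $\mathbb{C}$ y $\mathbb{R}\oplus\mathbb{R}$; tu comentario de que nada en el argumento requiere regularidad es correcto y pertinente.

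Sin embargo, el enunciado que debías justificar es la observación, cuyo contenido no trivial es que $\text{Cl}(\Phi)$ está \emph{presentada} por los generadores $e_1,\dots,e_n$ y las relaciones $e_j^2=\Phi(e_j)1$, $e_je_k=-e_ke_j$ para $j\neq k$, y ese paso final no aparece en tu propuesta. Lo que pruebas es que los $i(e_j)$ son linealmente independientes, generan el álgebra y que los productos ordenados forman una base; para concluir la presentación hace falta además el argumento (rutinario pero no vacío) siguiente: las relaciones se cumplen en $\text{Cl}(\Phi)$ —la primera por definición y la segunda polarizando $i(u+v)^2=\Phi(u+v)\mathbf{1}$ sobre vectores ortogonales, como en la observación \ref{funcionclifford-propiedades}—, de modo que existe un morfismo suryectivo del álgebra $P$ presentada abstractamente por esos generadores y relaciones sobre $\text{Cl}(\Phi)$; en $P$ las relaciones permiten reducir todo monomio a uno de los $2^n$ productos con índices estrictamente crecientes, luego $\dim P\le 2^n$; y como el Teorema \ref{baseClifford} da $\dim\text{Cl}(\Phi)=2^n$, la suryección es un isomorfismo. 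Sin ese conteo de dimensiones sólo has mostrado que los generadores satisfacen las relaciones, no que éstas generan todas las relaciones del álgebra.
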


\begin{obs}
Bajo esta notación, las involuciones $\alpha$, $t$ y $\alpha\circ{t}=\bar{  }$ en los elementos de la base quedan determinadas por:
\begin{equation}
\begin{gathered}
t(e_i)=e_i\ \ , \ \ t(1)=1\\
t(e_{i_1}...e_{i_k})=e_{i_k}...e_{i_1}\\
\alpha(e_i)=-e_i\ \ , \ \ \alpha(1)=1\\
\alpha(e_{i_1}...e_{i_k})=(-1)^ke_{i_1}...e_{i_k}\\
\overline{(e_i)}=-e_i\ \ , \ \ \overline{(1)}=1\\
\overline{(e_{i_1}...e_{i_k})}=(-1)^ke_{i_k}...e_{i_1}\\
\text{con} \ \ 1\leq{i_1}<i_2<...<i_k\leq{n}
\end{gathered}
\end{equation}
Se puede ver a partir de esto que $\text{Cl}(\Phi)^0$ está generada como espacio vectorial por el $1$ y los elementos $e_{i_1}...e_{i_k}$ con $k$ par, mientras que $\text{Cl}(\Phi)^1$ tiene por base a los $e_{i_1}...e_{i_k}$ con $k$ impar.
\end{obs}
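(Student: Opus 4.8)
The plan is to deduce all the displayed identities from the defining properties of the two canonical involutions (established in the two preceding propositions) together with the standard basis provided by Theorem \ref{baseClifford}; no new construction is needed. First I would record the values on generators. Since under the identification $e_i = i(e_i)$ each $e_i$ is the image of a vector, the defining relations $t(i(v)) = i(v)$ and $\alpha(i(v)) = -i(v)$ give directly $t(e_i) = e_i$ and $\alpha(e_i) = -e_i$. Both $t$ (regarded as the morphism $\text{Cl}(\Phi) \to \text{Cl}(\Phi)^{op}$) and $\alpha$ are algebra morphisms and therefore preserve the unit, so $t(1) = \alpha(1) = 1$.

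Next I would obtain the formulas on an arbitrary standard monomial $e_{i_1}\cdots e_{i_k}$ by a short induction on $k$. For the reversion, repeated use of the antimorphism property $t(xy) = t(y)t(x)$ together with $t(e_i) = e_i$ yields $t(e_{i_1}\cdots e_{i_k}) = t(e_{i_k})\cdots t(e_{i_1}) = e_{i_k}\cdots e_{i_1}$, so the order is reversed. For the grade involution, the morphism property $\alpha(xy) = \alpha(x)\alpha(y)$ and $\alpha(e_i) = -e_i$ give $\alpha(e_{i_1}\cdots e_{i_k}) = \alpha(e_{i_1})\cdots\alpha(e_{i_k}) = (-1)^k e_{i_1}\cdots e_{i_k}$, each factor contributing one sign. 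The Clifford conjugation then follows by composing the two: $\overline{e_{i_1}\cdots e_{i_k}} = \alpha(t(e_{i_1}\cdots e_{i_k})) = \alpha(e_{i_k}\cdots e_{i_1}) = (-1)^k e_{i_k}\cdots e_{i_1}$, and in particular $\overline{e_i} = -e_i$ and $\overline{1} = 1$. Because the ordered monomials together with $1$ form a basis, these values determine $t$, $\alpha$ and $\bar{\ }$ on all of $\text{Cl}(\Phi)$ by linearity, so there is nothing further to check regarding well-definedness.

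Finally, for the grading statement I would invoke the characterization $\text{Cl}^i(\Phi) = \{x : \alpha(x) = (-1)^i x\}$ from \ref{gradingprop}. Expanding an arbitrary element in the standard basis as $x = c_0\,1 + \sum c_{i_1\cdots i_k}\, e_{i_1}\cdots e_{i_k}$ and applying the sign formula for $\alpha$ shows that $\alpha(x) = c_0\,1 + \sum (-1)^k c_{i_1\cdots i_k}\, e_{i_1}\cdots e_{i_k}$. Hence $\alpha(x) = x$ exactly when the coefficients of all odd-length monomials vanish, while $\alpha(x) = -x$ exactly when the coefficient of $1$ and of all even-length monomials vanish. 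This identifies $\text{Cl}^0(\Phi)$ as the span of $1$ together with the even-length monomials and $\text{Cl}^1(\Phi)$ as the span of the odd-length monomials; each of these spanning sets, being a subset of the standard basis, is linearly independent and therefore a basis of the corresponding summand.

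I do not anticipate a genuine obstacle: the argument is essentially bookkeeping on top of the already-proven universal properties of $\alpha$ and $t$ and the basis theorem. The only point requiring mild care is that the sign computation for $\alpha$ must be carried out on the \emph{ordered} standard monomials asserted to be a basis, so that the coefficients in the expansion of a general $x$ are unambiguous; once this is fixed, the eigenspace description of the $\mathbb{Z}_2$-grading is immediate.
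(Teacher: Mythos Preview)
Your proposal is correct and matches the reasoning the paper has in mind. In fact the paper treats this as a bare observation with no proof: the formulas for $t$, $\alpha$ and $\bar{\ }$ on products of vectors were already displayed just before the $\mathbb{Z}_2$-grading was introduced, and this observation merely restates them in the new notation $e_i := i(e_i)$ and specializes to the ordered basis monomials from Teorema~\ref{baseClifford}. Your write-up simply makes explicit the routine verification (values on generators, extension by the (anti)morphism property, and reading off the grading from the eigenspace description \ref{gradingprop}) that the paper leaves to the reader.
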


\subsection{Grupo de Clifford-Lipschitz y los grupos Pin y Spin}

El objetivo de esta sección es introducir los distintos grupos contenidos en el álgebra de Clifford que dan lugar a la generalización del ejemplo analizado en la sección \ref{motivacion}. Recordemos que siguiendo el espíritu de ese ejemplo, queremos encontrar un grupo que se relacione con el grupo de isometrías del espacio cuadrático a través de alguna acción similar a la adjunta.

Denotaremos por $\text{Cl}(\Phi)^{*}$ al grupo de elementos inversibles del álgebra $\text{Cl}(\Phi)$. Notemos que el conjunto $\{x\in{\mathbb{V}}|\Phi(x)\neq{0}\}$ está contenido en $\text{Cl}(\Phi)^{*}$.

Queremos definir una acción lineal del grupo $\text{Cl}(\Phi)^{*}$ sobre el espacio vectorial $\mathbb{V}$ que sea del estilo de la acción adjunta. De hecho la acción que buscamos definir está dada por:
\begin{equation}
v\mapsto{\alpha(x)i_{\Phi}(v)x^{-1}}=\alpha(x)vx^{-1} \ \ \ \forall{v\in{\mathbb{V}}}.
\end{equation}
Resulta que esto no está del todo bien definido, pues existen elementos en $x\in\text{Cl}(\Phi)^{*}$ para los cuales $\alpha(x)\mathbb{V}x^{-1}\not\subset{\mathbb{V}}$, con lo cual no sería un endomorfismo sobre $\mathbb{V}$. Sucede que no podemos definir esta acción sobre todo el grupo de unidades del álgebra de Clifford, sino que debemos restringirnos a un subgrupo.
\begin{defn}
Dado un espacio cuadrático $(\mathbb{V},\Phi)$ definimos el \emph{grupo de Clifford-Lipschitz} $\Gamma(\Phi)$, como sigue:
\begin{equation}
\Gamma(\Phi):=\{x\in{\text{Cl}(\Phi)^{*}}|\alpha(x)vx^{-1}\in{\mathbb{V}}\ \ \forall{v\in\mathbb{V}}\}
\end{equation}
Definimos además la \emph{acción adjunta torcida} del grupo $\Gamma(\Phi)$ sobre el espacio vectorial $\mathbb{V}$, $\rho$ por:
\begin{equation}
\rho_X(v)=\alpha(x)i_{\Phi}(v)x^{-1}=\alpha(x)vx^{-1} \ \ \ \forall{v\in{\mathbb{V}}}.
\end{equation}
\end{defn}
Para ver que ésta sea una buena definición hay que probar que $\Gamma(\Phi)$ es grupo y que $\rho$ es acción lineal.
\begin{proof}
Veamos primero que $\rho$ es acción lineal. Esto es, para todo $x\in{\Gamma(\Phi)}$, $\rho_{x}$ es transformación lineal en $\mathbb{V}$, para todos ${x,y}\in{\Gamma(\Phi)}$ se cumple $\rho_{xy}=\rho_x\circ\rho_y$ y que $\rho_1=id$.
\begin{itemize}
\item $\rho_{x}(\lambda{u}+v)=\alpha(x)(\lambda{u}+v)x^{-1}=\lambda\alpha(x)ux^{-1}+\alpha(x)vx^{-1}=\lambda\rho_x(u)+\rho_x(v)$, esto es $\rho_x$ es lineal para todo $x$. Más aún notemos que $\rho_x$ es isomorfismo en $\mathbb{V}$ para todo $x$. Es inyectiva, pues, supongamos que $\rho_x(u)=0$, entonces tenemos que $\alpha(x)ux^{-1}=0$. Por otro lado, como $x$ tiene inverso $x^{-1}$, $\alpha(x)$ tiene inverso $\alpha(x^{-1})$, con lo cual multiplicando a derecha por $x$ y a izquierda por $\alpha(x^{-1})$ se obtiene que $u=0$. Más aún, como $\mathbb{V}$ es de dimensión finita, $\rho_x:\mathbb{V}\rightarrow\mathbb{V}$ es isomorfismo $\forall{x\in\Gamma{(\Phi)}}$.
\item Trivialmente $1\in{\Gamma(\Phi)}$ y $\rho_1(v)=\alpha(1)v1=v=id(v)$.
\item $\rho_x\circ\rho_y(v)=\rho_x(\rho_y(v))=\alpha(x)\rho_y(v)x^{-1}=\alpha(x)\alpha(y)vy^{-1}x^{-1}=\alpha(xy)v(xy)^{-1}=\rho_{xy}(v)$
\end{itemize}
La otra parte es ver que $\Gamma(\Phi)$ es grupo:
\begin{itemize}
\item Trivialmente $1\in\Gamma(\Phi)$
\item Dado $x\in{\Gamma(\Phi)}$, entonces veremos que $x^{-1}\in{\Gamma(\Phi)}$. En efecto, como $\rho_x$ es biyectiva para todo $x$, entonces dado $v\in\mathbb{V}$ existe $w\in\mathbb{V}$ tal que $v=\alpha(x)wx^{-1}$. Entonces, $\alpha(x^{-1})vx=\alpha(x^{-1})\alpha(x)wx^{-1}x=w\in\mathbb{V} \ \ \forall{v\in{\mathbb{V}}}\forall{x\in{\Gamma(\Phi)}}$. Luego, $x\in{\Gamma(\Phi)}$.
\item Sean $x,y\in\Gamma(\Phi)$, entonces $\alpha(xy)v(xy)^{-1}=\alpha(x)\alpha(y)vy^{-1}x^{-1}$. Como $y\in\Gamma(\Phi)$, entonces $\alpha(y)vy^{-1}=w\in{\mathbb{V}}$, con lo cual $\alpha(x)\alpha(y)vy^{-1}x^{-1}=\alpha(x)wx^{-1}\in{\mathbb{V}}$, pues $x\in{\Gamma(\Phi)}$. Por lo tanto $xy\in\Gamma(\Phi)$.
\end{itemize}
Luego, $\Gamma(\Phi)$ es un grupo y $\rho$ es acción lineal.
\end{proof}

\begin{obs}\label{adjuntatorcida}
El término acción adjunta torcida surge para diferenciarla de la \emph{acción adjunta} $v\mapsto{xvx^{-1}}$. Ésta será una acción lineal de otro subgrupo $G\in\subset\text{Cl}(\Phi)^{*}$, construido por el requisito de que $xvx^{-1}\in\mathbb{V} \ \ \forall{x\in{G},v\in\mathbb{V}}$. Esta acción se denota por Ad. En la bibliografía donde se trata con las dos acciones en simultáneo suele llamarse a $G$ el grupo de Clifford-Lipschitz y a $\Gamma(\Phi)$, el grupo de Clifford-Lipschitz torcido \cite{VazdaRocha}. En lo que sigue en estas notas utilizaremos solamente la acción adjunta torcida.

La acción adjunta torcida fue introducida por Atiyah, Bott y Shapiro en el artículo \emph{``Clifford Modules''}\cite{ABS} de central importancia en teoría de Clifford. La ventaja que tiene frente a la acción adjunta usual y que veremos en lo sucesivo, es que dado un vector isotrópico $v$, la transformación $\rho_v$ corresponde a la reflexión respecto del hiperplano ortogonal a $v$, $s_v$. Además el núcleo de esta acción está dado por $\mathbb{R}^{*}1$. En el caso de la acción adjunta, uno obtiene $Ad(v)=-s_v$ y el núcleo de esta acción es el centro del álgebra de Clifford, que puede ser más grande que $\mathbb{R}^{*}1$.
\end{obs}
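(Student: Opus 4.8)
The plan is to prove the two substantive assertions of this observation, working in a regular quadratic space $(\mathbb{V},\Phi)$ of finite dimension $n$; fix an orthogonal basis $\{e_{1},\dots,e_{n}\}$, so that $\Phi(e_{j})\neq 0$ for every $j$, and use the associated standard basis of $\text{Cl}(\Phi)$ furnished by Teorema \ref{baseClifford}. The claims are: \emph{(i)} every anisotropic $v\in\mathbb{V}$ lies in $\Gamma(\Phi)$ and satisfies $\rho_{v}=s_{v}$ (and $\mathrm{Ad}_{v}=-s_{v}$ for the untwisted action); and \emph{(ii)} $\ker\rho=\mathbb{R}^{*}\mathbf{1}$, whereas $\ker\mathrm{Ad}$ is the group of invertible central elements of $\text{Cl}(\Phi)$, which may be strictly larger.

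For \emph{(i)}, an anisotropic $v$ is invertible with $v^{-1}=\Phi(v)^{-1}v$, because $v^{2}=\Phi(v)\mathbf{1}$ and $\Phi(v)\neq 0$. Expanding $i(u+v)^{2}=\Phi(u+v)\mathbf{1}$ and using \eqref{bilin-cuad} yields the polarization identity $uv+vu=2\varphi(u,v)\mathbf{1}$ for all $u,v\in\mathbb{V}$, whence $vuv=2\varphi(u,v)v-\Phi(v)u$, and therefore $\rho_{v}(u)=\alpha(v)uv^{-1}=-\Phi(v)^{-1}vuv=u-\frac{2\varphi(u,v)}{\Phi(v)}v=s_{v}(u)$. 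In particular $\rho_{v}(\mathbb{V})\subseteq\mathbb{V}$, so $v\in\Gamma(\Phi)$ and $\rho_{v}=s_{v}\in\mathrm{O}(\Phi)$; dropping the sign introduced by $\alpha$ gives likewise $\mathrm{Ad}_{v}(u)=vuv^{-1}=\Phi(v)^{-1}vuv=-s_{v}(u)$.

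For \emph{(ii)}, let $x\in\Gamma(\Phi)$ satisfy $\rho_{x}=\mathrm{id}$, that is $\alpha(x)v=vx$ for every $v\in\mathbb{V}$. Writing $x=x^{0}+x^{1}$ along the grading \eqref{Z2grading}, we have $\alpha(x)=x^{0}-x^{1}$; since left or right multiplication by $v\in\mathbb{V}\subseteq\text{Cl}^{1}(\Phi)$ interchanges the two graded summands, comparing graded components of $x^{0}v-x^{1}v=vx^{0}+vx^{1}$ gives $x^{0}v=vx^{0}$ and $x^{1}v=-vx^{1}$ for all $v$. The core computation is that, writing $e_{I}=e_{i_{1}}\cdots e_{i_{k}}$ for $I=\{i_{1}<\dots<i_{k}\}$, one has $e_{j}e_{I}=(-1)^{|I|}e_{I}e_{j}$ when $j\notin I$ and $e_{j}e_{I}=(-1)^{|I|-1}e_{I}e_{j}$ when $j\in I$. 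Expanding $x^{0}=\sum_{|I|\ \mathrm{even}}a_{I}e_{I}$, the relation $x^{0}e_{j}=e_{j}x^{0}$, together with linear independence of the standard basis and $\Phi(e_{j})\neq 0$, forces $a_{I}=0$ whenever $j\in I$ with $|I|$ even and nonempty (there the two products differ by a genuine factor $-1$); letting $j$ run over $1,\dots,n$ eliminates every nonempty $I$, so $x^{0}\in\mathbb{R}\mathbf{1}$. The parallel count for $x^{1}=\sum_{|I|\ \mathrm{odd}}b_{I}e_{I}$ using $x^{1}e_{j}=-e_{j}x^{1}$ forces every $b_{I}=0$, so $x^{1}=0$. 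Hence $x\in\mathbb{R}\mathbf{1}$, and invertibility gives $x\in\mathbb{R}^{*}\mathbf{1}$; conversely each $a\mathbf{1}$ with $a\neq 0$ lies in $\Gamma(\Phi)$ and acts trivially, since scalars are central and fixed by $\alpha$. For the untwisted action, $\mathrm{Ad}_{x}=\mathrm{id}$ is equivalent to $x$ commuting with all of $\mathbb{V}$, hence --- because $\mathbb{V}$ and $\mathbf{1}$ generate $\text{Cl}(\Phi)$ --- to $x$ being central; the invertible centre can exceed $\mathbb{R}^{*}\mathbf{1}$, for example the pseudoscalar $e_{1}\cdots e_{n}$ is central when $n$ is odd.

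The one delicate point is this basis computation in the last paragraph: the sign bookkeeping relating $e_{j}e_{I}$ to $e_{I}e_{j}$, and the resulting fact that inside $\text{Cl}^{0}(\Phi)$ the only element commuting with all of $\mathbb{V}$ is a scalar, while inside $\text{Cl}^{1}(\Phi)$ the only element anticommuting with all of $\mathbb{V}$ is $0$. Everything else is a single manipulation of the fundamental Clifford relation.
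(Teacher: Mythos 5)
Tu propuesta es correcta y sigue en esencia el mismo camino que el texto: la identidad $\rho_{v}=s_{v}$ se obtiene exactamente como en la Proposición \ref{reflexiones} (polarización $uv+vu=2\varphi(u,v)\mathbf{1}$ más $v^{-1}=\Phi(v)^{-1}v$), y el cálculo de $\ker\rho=\mathbb{R}^{*}\mathbf{1}$ reproduce la estrategia del Lema \ref{nucleo} —descomposición $x=x^{0}+x^{1}$, separación por paridad de $\alpha(x)v=vx$ y prueba contra cada $e_{j}$ de una base ortogonal—, solo que organizando el conteo coeficiente a coeficiente en la base estándar en lugar de factorizar $e_{1}$. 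Como valor añadido verificas también las afirmaciones sobre la acción adjunta sin torcer ($\mathrm{Ad}_{v}=-s_{v}$ y $\ker\mathrm{Ad}=$ centro, con el pseudoescalar en dimensión impar como ejemplo), que el texto enuncia sin demostrar.
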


\begin{defn}
Definimos la aplicación $N:\text{Cl}(\Phi)\rightarrow\text{Cl}(\Phi)$ por $N(x)=x\overline{x}$. $N(x)$ se denomina \emph{la norma de $x$}.
\end{defn}
\begin{defn}
Definimos el \emph{grupo especial de Clifford} $\Gamma(\Phi)^{0}:=\Gamma(\Phi)\cap\text{Cl}(\Phi)^{0}$.
\end{defn}

\begin{obs}
Observemos que
\begin{itemize}
\item $N(v)=v\overline{v}=v(-v)=-v^2=-\Phi(v)1$ para todo vector $v\in\mathbb{V}$.
\item Además $N(e_{i_1}...e_{i_k})=e_{i_1}...e_{i_k}\overline{e_{i_1}...e_{i_k}}=e_{i_1}...e_{i_k}\overline{e_{i_k}}...\overline{e_{i_k}}=(-1)^k\Phi(e_{i_1})...\Phi(e_{i_k})1$
\end{itemize}
\end{obs}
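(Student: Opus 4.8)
El plan es tratar los dos ítems por separado, cada uno mediante un cálculo directo a partir de las fórmulas ya establecidas para las involuciones canónicas $\alpha$, $t$ y la conjugación de Clifford, y de la relación definitoria $i(v)^2=\Phi(v)1$ del álgebra de Clifford.

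Para la primera igualdad partiría de la definición $\overline{x}=\alpha(t(x))$ junto con las propiedades $t(i(v))=i(v)$ y $\alpha(i(v))=-i(v)$ para $v\in\mathbb{V}$ registradas más arriba; escribiendo $v$ por $i(v)$ bajo la identificación $\mathbb{V}\subset\text{Cl}(\Phi)$, esto da $\overline{v}=\alpha(t(v))=\alpha(v)=-v$. Luego $N(v)=v\overline{v}=v(-v)=-v^2$, y la relación definitoria $i(v)^2=\Phi(v)1$ entrega $N(v)=-\Phi(v)1$.

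Para la segunda igualdad fijo una base ortogonal $\{e_1,\dots,e_n\}$ e índices $1\leq i_1<\dots<i_k\leq n$. Primero usaría que la conjugación de Clifford es un antimorfismo ---lo cual sigue de que $t$ es antimorfismo y $\alpha$ morfismo, de modo que $\overline{xy}=\alpha(t(xy))=\alpha(t(y))\alpha(t(x))=\overline{y}\,\overline{x}$---, junto con $\overline{e_i}=\alpha(t(e_i))=\alpha(e_i)=-e_i$, para obtener $\overline{e_{i_1}\cdots e_{i_k}}=\overline{e_{i_k}}\cdots\overline{e_{i_1}}=(-1)^k e_{i_k}\cdots e_{i_1}$, que es precisamente la fórmula de la conjugación sobre la base estándar enunciada antes. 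Entonces $N(e_{i_1}\cdots e_{i_k})=(-1)^k\,e_{i_1}\cdots e_{i_k}e_{i_k}\cdots e_{i_1}$, y concluiría colapsando el par central $e_{i_k}e_{i_k}=\Phi(e_{i_k})1$, extrayendo el escalar y repitiendo de manera inductiva sobre el palíndromo restante $e_{i_1}\cdots e_{i_{k-1}}e_{i_{k-1}}\cdots e_{i_1}$; tras $k$ pasos queda $(-1)^k\Phi(e_{i_1})\cdots\Phi(e_{i_k})1$.

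No espero un obstáculo real: ambas partes son inmediatas una vez que se dispone de las fórmulas para $\alpha$, $t$ y la conjugación sobre los elementos de la base. El único punto que merece un comentario es el colapso inductivo del palíndromo $e_{i_1}\cdots e_{i_k}e_{i_k}\cdots e_{i_1}$, que sólo usa $e_j^2=\Phi(e_j)1$ y la centralidad de los escalares (no hace falta anticonmutar, pues la cancelación ocurre de adentro hacia afuera); de hecho el mismo argumento muestra que $N(v_1\cdots v_k)=(-1)^k\Phi(v_1)\cdots\Phi(v_k)1$ para vectores cualesquiera $v_1,\dots,v_k\in\mathbb{V}$. Conviene además señalar que la expresión intermedia del enunciado, $\overline{e_{i_k}}\cdots\overline{e_{i_k}}$, es una errata por $\overline{e_{i_k}}\cdots\overline{e_{i_1}}$.
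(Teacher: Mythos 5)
Tu propuesta es correcta y sigue esencialmente el mismo camino que el texto, que presenta la observación como un cálculo directo: $\overline{v}=\alpha(t(v))=-v$ para el primer ítem, y para el segundo la propiedad de antimorfismo de la conjugación seguida del colapso del palíndromo $e_{i_1}\cdots e_{i_k}e_{i_k}\cdots e_{i_1}$ usando $e_j^2=\Phi(e_j)1$. Tu señalamiento de la errata $\overline{e_{i_k}}\cdots\overline{e_{i_k}}$ en lugar de $\overline{e_{i_k}}\cdots\overline{e_{i_1}}$ es acertado.
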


\begin{prop}
Las funciones $\alpha$ y $t$ inducen un automorfismo y antiautomorfismo respectivamente en $\Gamma{\Phi}$
\end{prop}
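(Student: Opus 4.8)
The plan is to show that $\Gamma(\Phi)$ is invariant under both $\alpha$ and $t$; since $\alpha$ is an algebra automorphism and $t$ an algebra antiautomorphism of $\text{Cl}(\Phi)$, once we know $\alpha(\Gamma(\Phi))=\Gamma(\Phi)$ and $t(\Gamma(\Phi))=\Gamma(\Phi)$ the restrictions are automatically a group automorphism and a group antiautomorphism of $\Gamma(\Phi)$, respectively. The facts used repeatedly are that $\alpha$ and $t$ are involutions ($\alpha\circ\alpha=t\circ t=id$, hence bijective), that they preserve the subspace $\mathbb{V}\subset\text{Cl}(\Phi)$ (indeed $\alpha(v)=-v$ and $t(v)=v$ for $v\in\mathbb{V}$, so $\alpha(\mathbb{V})=t(\mathbb{V})=\mathbb{V}$), that $\alpha$ and $t$ commute, and that $\Gamma(\Phi)$ is a group closed under inverses (this last was proved when checking that $\rho$ is well defined).

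First I would treat $\alpha$. Let $x\in\Gamma(\Phi)$. Since $\alpha$ is an automorphism, $\alpha(x)$ is invertible with $\alpha(x)^{-1}=\alpha(x^{-1})$. For $v\in\mathbb{V}$ one has, using $\alpha\circ\alpha=id$, that $\alpha(\alpha(x))\,v\,\alpha(x)^{-1}=x\,v\,\alpha(x^{-1})$. To see this lies in $\mathbb{V}$, apply $\alpha$ to the element $\alpha(x)\,v\,x^{-1}$, which belongs to $\mathbb{V}$ because $x\in\Gamma(\Phi)$: since $\alpha$ is multiplicative and $\alpha(v)=-v$, we get $\alpha(\alpha(x)\,v\,x^{-1})=-x\,v\,\alpha(x^{-1})$, which lies in $\alpha(\mathbb{V})=\mathbb{V}$. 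Hence $x\,v\,\alpha(x^{-1})\in\mathbb{V}$ for every $v\in\mathbb{V}$, so $\alpha(x)\in\Gamma(\Phi)$. This shows $\alpha(\Gamma(\Phi))\subseteq\Gamma(\Phi)$, and applying the inclusion to $\alpha(x)$ together with $\alpha\circ\alpha=id$ gives the reverse inclusion.

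Next I would treat $t$. Let $x\in\Gamma(\Phi)$. Since $t$ is an antiautomorphism, $t(x)$ is invertible with $t(x)^{-1}=t(x^{-1})$ (because $t(x)t(x^{-1})=t(x^{-1}x)=t(1)=1$), and using $\alpha\circ t=t\circ\alpha$ we have $\alpha(t(x))=t(\alpha(x))$, so for $v\in\mathbb{V}$, $\alpha(t(x))\,v\,t(x)^{-1}=t(\alpha(x))\,v\,t(x^{-1})$. Applying $t$, which reverses products, and using $t(v)=v$ and $t\circ t=id$, this element lies in $\mathbb{V}$ if and only if $x^{-1}\,v\,\alpha(x)\in\mathbb{V}$. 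To establish the latter, I would use that $x^{-1}\in\Gamma(\Phi)$ (already proved), which gives $\alpha(x^{-1})\,v\,x\in\mathbb{V}$ for all $v\in\mathbb{V}$; applying $\alpha$ to $\alpha(x^{-1})\,v\,x$ and using $\alpha\circ\alpha=id$ and $\alpha(v)=-v$ yields $-x^{-1}\,v\,\alpha(x)\in\alpha(\mathbb{V})=\mathbb{V}$, hence $x^{-1}\,v\,\alpha(x)\in\mathbb{V}$ as desired. Thus $t(x)\in\Gamma(\Phi)$, and as before $t(\Gamma(\Phi))=\Gamma(\Phi)$.

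Finally I would conclude: $\alpha|_{\Gamma(\Phi)}$ is a bijection of $\Gamma(\Phi)$ satisfying $\alpha(xy)=\alpha(x)\alpha(y)$, hence a group automorphism; $t|_{\Gamma(\Phi)}$ is a bijection satisfying $t(xy)=t(y)t(x)$, hence a group antiautomorphism. The only point requiring care is the $t$ case, where the order-reversal forces one to track which factor ends up on the left after applying $t$; beyond that bookkeeping, everything reduces to the single observation that $\alpha$ and $t$ map $\mathbb{V}$ onto itself, so the defining condition of $\Gamma(\Phi)$ transforms well under them.
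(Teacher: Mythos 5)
Tu propuesta es correcta y sigue esencialmente el mismo camino que la prueba del texto: verificar que $\Gamma(\Phi)$ es cerrado bajo $\alpha$ y $t$ aplicando la involución correspondiente al elemento $\alpha(x)vx^{-1}\in\mathbb{V}$ (o a $\alpha(x^{-1})vx$) y usando que $\alpha(\mathbb{V})=t(\mathbb{V})=\mathbb{V}$. La única diferencia es cosmética: para el caso de $t$ tú obtienes el hecho intermedio $x^{-1}v\alpha(x)\in\mathbb{V}$ partiendo de $x^{-1}\in\Gamma(\Phi)$ y aplicando $\alpha$, mientras que el texto lo deduce de $\alpha(x)\in\Gamma(\Phi)$ y su inverso; ambas rutas son equivalentes.
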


\begin{proof}
Las propiedades se heredan de $\text{Cl}(\Phi)$, hay que ver que son cerradas en $\Gamma(\Phi)$. Esto es, dado $x\in{\Gamma}(\Phi)$, entonces hay que ver que $\alpha(x)$ y $t(x)$ también lo están.
Sea $v\in\mathbb{V}$,
\begin{equation}
\alpha(\alpha(x))v\alpha(x)^{-1}=\alpha(\alpha(x))(-\alpha(v))\alpha(x^{-1})=-\alpha(\alpha(x)vx^{-1}).
\end{equation}
Como $x\in{\mathbb{V}}$, se cumple que $\alpha(x)vx^{-1}\in\mathbb{V}$. Por otro lado, $\alpha(w)=-w$ para todo $w\in{\mathbb{V}}$, con lo cual se obtiene que:
\begin{equation}
\alpha(\alpha(x))v\alpha(x)^{-1}=\alpha(x)vx^{-1}\in{\mathbb{V}},
\end{equation}
con lo cual obtenemos que $\alpha(x)\in{\Gamma(\Phi)}$.

Por otro lado notemos que:
\begin{equation}
\alpha(t(x))vt(x)^{-1}=t(\alpha(x))t(v)t(x^{-1})=t(x^{-1}v\alpha(x)).
\end{equation}
Como $\Gamma(\Phi)$ es grupo y $\alpha(x)\in\Gamma(\Phi)$, entonces $\alpha(x)^{-1}=\alpha(x^{-1})\in\Gamma(\Phi)$, con lo cual:
\begin{equation}
\alpha(\alpha(x^{-1}))v\alpha(x)=x^{-1}v\alpha(x)\in\mathbb{V},
\end{equation}
y como además $t(w)=w$ para todo $w\in\mathbb{V}$, se tiene que:
\begin{equation}
\alpha(t(x))vt(x)^{-1}=x^{-1}v\alpha(x)\in\mathbb{V}.
\end{equation}
Por lo tanto $\Gamma(\Phi)$ es cerrado por $\alpha$ y $t$ y queda demostrada la proposición.
\end{proof}

Como se mencionó previamente, en la observación \ref{adjuntatorcida}, hay una relación entre la acción adjunta torcida del grupo de Clifford y las reflexiones en el espacio cuadrático $\mathbb{V}$. Este hecho, junto con el teorema de Cartan-Dieudonné nos permite obtener todas las isometrías del espacio $(\mathbb{V},\Phi)$ a partir de la acción adjunta torcida.

\begin{prop}\label{reflexiones}
Sea $(\mathbb{V},\Phi)$ un espacio cuadrático de dimensión finita. Para cada elemento $x\in\mathbb{V}$ con $\Phi(v)\neq{0}$, entonces
\begin{equation}
\begin{split}
\rho_x:\mathbb{V}&\longrightarrow\mathbb{V}\\
v&\mapsto\alpha(x)vx^{-1},
\end{split}
\end{equation}
es la reflexión respecto del hiperplano $H=x^{\perp}$, de todos los vectores ortogonales al vector $x$.
\end{prop}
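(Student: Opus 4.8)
The goal is to show that for $x\in\mathbb{V}$ with $\Phi(x)\neq 0$, the twisted adjoint action $\rho_x(v)=\alpha(x)vx^{-1}$ coincides with the reflection $s_x$ from Definition~\ref{defn:6}. The first thing to settle is that $x$ actually lies in $\Gamma(\Phi)$ so that $\rho_x$ makes sense; this will drop out of the computation itself, since we will exhibit $\rho_x(v)$ explicitly as an element of $\mathbb{V}$ for every $v$. The starting point is to compute $x^{-1}$ in $\text{Cl}(\Phi)$: from $x^2=\Phi(x)\mathbf{1}$ (the defining Clifford relation) and $\Phi(x)\neq 0$, we get $x^{-1}=\frac{1}{\Phi(x)}x$. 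Also $\alpha(x)=-x$ since $x\in i_\Phi(\mathbb{V})$ (using the identification $\mathbb{V}\subset\text{Cl}(\Phi)$ from Theorem~\ref{baseClifford}).

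The main computation is then to evaluate
\begin{equation}
\rho_x(v)=\alpha(x)vx^{-1}=(-x)v\Big(\tfrac{1}{\Phi(x)}x\Big)=-\tfrac{1}{\Phi(x)}\,xvx.
\end{equation}
The key algebraic input is the polarization identity in the Clifford algebra from Observation~\ref{funcionclifford-propiedades}: $uv+vu=\varphi(u,v)\mathbf{1}$ for all $u,v\in\mathbb{V}$ — or, in the convention forced by $v^2=\Phi(v)\mathbf{1}$, the relation $xv+vx=2\varphi(x,v)\mathbf{1}$. I would use this to rewrite $xv=2\varphi(x,v)\mathbf{1}-vx$, so that
\begin{equation}
xvx=\big(2\varphi(x,v)\mathbf{1}-vx\big)x=2\varphi(x,v)x-v x^2=2\varphi(x,v)x-\Phi(x)v.
\end{equation}
Substituting back gives $\rho_x(v)=-\tfrac{1}{\Phi(x)}\big(2\varphi(x,v)x-\Phi(x)v\big)=v-\tfrac{2\varphi(x,v)}{\Phi(x)}x$, which is exactly $s_x(v)$. (One should double-check the factor of $2$ against the paper's normalization in equation~\eqref{funcionclifford-propiedades}; if that identity is written as $uv+vu=\varphi(u,v)\mathbf{1}$ without the $2$, then $\varphi$ there must be twice the bilinear form of Definition~\ref{esp-cuad}, and the constants still match Definition~\ref{defn:6}. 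Reconciling these conventions is the one place to be careful.)

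Having the explicit formula $\rho_x(v)=v-\tfrac{2\varphi(x,v)}{\Phi(x)}x$ finishes everything at once: it shows $\rho_x(v)\in\mathbb{V}$ for all $v$, hence $x\in\Gamma(\Phi)$ and $\rho_x$ is a well-defined linear endomorphism of $\mathbb{V}$; and it is literally the defining expression of the hyperplane reflection $s_x$ with respect to $H=x^{\perp}$. If desired, one can add the sanity checks that $\rho_x(x)=-x$ and $\rho_x(w)=w$ for $w\perp x$, confirming the geometric description. I do not expect any real obstacle here — the proof is a short direct computation — the only subtlety is bookkeeping the normalization constant in the fundamental Clifford identity, and making sure the sign coming from $\alpha(x)=-x$ is what turns the naive conjugation (which would give $-s_x$, as noted in Observation~\ref{adjuntatorcida}) into the genuine reflection $s_x$.
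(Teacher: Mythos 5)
Tu propuesta es correcta y sigue esencialmente el mismo camino que la prueba del texto: ambas usan $x^{-1}=x/\Phi(x)$, $\alpha(x)=-x$ y la identidad $ux+xu=2\varphi(u,x)\mathbf{1}$, solo que tú partes de $\rho_x(v)$ y llegas a $s_x(v)$ mientras que el texto recorre la cadena en sentido inverso. Tu observación sobre la normalización es acertada: la identidad tal como aparece en la observación \ref{funcionclifford-propiedades} omite el factor $2$ (que sí se usa, correctamente, en la demostración de la proposición), de modo que la constante que empleas es la consistente con la definición \ref{esp-cuad}.
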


\begin{proof}
La reflexión $s_x$ respecto del hiperplano $H$, ortogonal a $x$, con $\Phi(x)\neq{0}$, entonces:
\begin{equation}
s_x(u)=u-2\frac{\varphi(u,x)}{\Phi(x)}x\ \ \ \ \forall{u}\in\mathbb{V}.
\end{equation}
Tenemos además que $x^2=\Phi(x)1$ $(x^{-1}=x/\Phi(x))$ y además $ux+xu=2\varphi(u,x)1$, por lo tanto:
\begin{equation}
s_x(u)=u-\frac{(ux+xu)}{\Phi(x)}x=u-\frac{ux^2}{\Phi(x)}-\frac{xux}{\Phi(x)}=\alpha(x)ux^{-1}=\rho_x(u).
\end{equation}
\end{proof}

A continuación probaremos el otro hecho notado en la observación \ref{adjuntatorcida} respecto del núcleo de la acción.

\begin{lema}\label{nucleo}
El núcleo de la función $\rho:\Gamma(\Phi)\longrightarrow{\text{Aut}(\mathbb{V})}$ es $\mathbb{R}^{*}1$, el grupo de múltiplos escalares no nulos de $1$.
\end{lema}

\begin{proof}
Supongamos que $\rho(x)=\text{id}:\mathbb{V}\rightarrow\mathbb{V}$. Entonces se cumple que:
\begin{equation}\label{nucleoeq1}
\alpha(x)vx^{-1}=v \ \ \Rightarrow \alpha(x)v=vx \ \ \forall{v\in\mathbb{V}}.
\end{equation}
Por la graduación $\mathbb{Z}_{2}$ del álgebra de Clifford, se tiene que:
\begin{equation}
x=x^0+x^{1}\ \ \ \text{, con} x^{i}\in\text{Cl}^{i},
\end{equation}
con lo cual $\alpha(x)=\alpha(x^{0})+\alpha(x^{1})=x^{0}-x^{1}$. De la ecuación \ref{nucleoeq1}, se puede escribir:
\begin{equation}
(x^0-x^1)v=v(x^0+x^1) \Rightarrow \underbrace{x^{0}v-vx^{0}}_{\in\text{Cl}^{1}}- \underbrace{(x^1v+vx^{1})}_{\in\text{Cl}^{0}}=0,
\end{equation}
y al igualar tanto la parte perteneciente a $\text{Cl}^0$, como la parte en $\text{Cl}^1$ a cero, se obtiene que:
\begin{equation}\label{nucleoeq2}
x^0v=vx^0 \ \ \ , \ \ \ -x^1v=vx^1.
\end{equation}
Por el teorema \ref{baseClifford}, podemos separar de $x^0$ la parte que contiene a $e_1$ de la que no. Esto es existen $a^0\in\text{Cl}^{0}$ y $b^1\in\text{Cl}^{1}$, tales que:
\begin{equation}
x^0=a^0+e_1b^1,
\end{equation}
donde ni $a^0$, ni $b^1$ contienen sumandos con $e_1$ como factor. Podría decirse que $a^0$ y $b^1$ están en la sub-álgebra generada por $\{e_2,...,e_n\}$.

Aplicando esto a la primera igualdad en \ref{nucleoeq2} y utilizando $v=e_1$, se obtiene:
\begin{equation}
\begin{gathered}
(a^++e_1b^1)e_1=e_1(a^0+e_1b^1)=\\
=a^0e_1+e_1b^1e_1=e_1a^0+e_1^2b^1.
\end{gathered}
\end{equation}
Como la base es ortogonal se tiene que $e_ie_j=-e_je_i \ \ \forall{i\neq{j}}$ y además $a^0\in\text{Cl}^0$ y no contiene a $e_1$ en su expansión, se tiene que $a^0e_1=e_1a^0$. Análogamente, como $b^1\in\text{Cl}^1$ y no contiene a $e_1$, $e_1b^1=-b^1e_1$. Con todo esto, la ecuación se reduce a:
\begin{equation}
e_1b^1e_1=e_1^2b^1 \ \ \ \equiv \ \ \ -e_1^2b^1=e_1^2b^1.
\end{equation}
Además $e_1^2=\Phi(e_1)1$, con lo cual la ecuación queda $b^1=-b^1$ o equivalentemente $b^1=0$.

De acá deducimos que $x^0$ no contiene ningún sumando que tenga a $e_1$ como factor en su expansión. Haciendo esto mismo con todos los vectores de la base ortogonal de $\mathbb{V}$, concluimos que $x^0\in\mathbb{R}1$.

Podemos aplicar un argumento similar para $x^1$. Tenemos que $x^1=a^1+e_1b^0$, con $a^1\in\text{Cl}^{1}$, $b^0\in\text{Cl}^{0}$ y ninguno contiene a $e_1$ en su expansión,con lo cual, la segunda igualdad en \ref{nucleoeq2} para $v=e_1$ se obtiene:
\begin{equation}
\begin{gathered}
-(a^1+e_1b^0)e_1=e_1(a^1+e_1b^0)=\\
=-a^1e_1-e_1b^0e_1=e_1a^1+e_1^2b^0.
\end{gathered}
\end{equation}
Al igual que en el caso anterior, como $a^1$ está en $\text{Cl}^{1}$ y no contiene a $e_1$ en su expansión, entonces $a^1e_1=-e_1a^1$. Además como $b^0$ es par y no contiene $e_1$, $b^0e_1=e_1b^0$ y se obtiene:
\begin{equation}
-e_1^2b_0=e_1^2b^0 \ \ \equiv \ \ b^0=-b^0 \ \ \equiv \ \ b^0=0,
\end{equation}
donde hemos usado $e_1^2=\Phi(e_1)1$. Esto implica que $x^1$ no contiene a $e_1$ en su expansión. Repitiendo este procedimiento con todos los vectores de la base, se concluye que $x^1$ no depende de nungún $e_i$. Esto, junto al hecho de que $x^1\in\text{Cl}^1$, implica que $x^1=0$. Con lo cual $x=x^0\in\mathbb{R}1$. Por otro lado $x\in\Gamma(\Phi)$, entonces $x\in(\mathbb{R}1\cap\Gamma(\Phi))=\mathbb{R}^*1$. Luego $x\in\mathbb{R}^*1$ y $\ker{\rho}=\mathbb{R}^*1$.
\end{proof}

\begin{obs}
El lema es válido para toda forma no degenerada, pero por ejemplo si $\Phi\equiv{0}$ en $\mathbb{R}^2$y tomamos $x=1+e_1e_2$, se tiene que $x^{-1}=1-e_1e_2$. Entonces:
\begin{equation}
\begin{gathered}
\alpha(1+e_1e_2)v(1+e_1e_2)^{-1}=(1+e_1e_2)v(1-e_1e_2)=\\
=(v+e_1e_2v)(1-e_1e_2)=v-ve_1e_2-e_1e_2ve_1e_2.
\end{gathered}
\end{equation}
Un vector $v$ se escribe $v=v^1e_1+v^2e_2$, con lo cual tenemos que:
\begin{equation}
e_1e_2v=e_1e_2(v^1e_1+v^2e_2)=v^1e_1e_2e_1=-v^1e_2e_1^2=0,
\end{equation}
y por lo tanto $\alpha(x)vx^{-1}=v$, esto es $x\in{\ker{\rho}}$, aún cuando $x\notin{\mathbb{R}^{*}1}$.
\end{obs}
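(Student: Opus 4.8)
The plan is to read this observation as an explicit counterexample showing that the non-degeneracy hypothesis in Lemma \ref{nucleo} is genuinely needed. I would verify three claims in turn: that $x=1+e_1e_2$ is invertible with $x^{-1}=1-e_1e_2$, that the twisted adjoint map $\rho_x$ is the identity on $\mathbb{V}$, and that $x\notin\mathbb{R}^{*}1$. Together these exhibit an element of $\ker\rho$ lying outside $\mathbb{R}^{*}1$, contradicting the conclusion of the lemma precisely when $\Phi$ is totally degenerate.

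First I would record the defining relations of $\text{Cl}(\Phi)$ for $\Phi\equiv 0$ on $\mathbb{R}^2$: since $\Phi(e_i)=0$ we have $e_1^2=e_2^2=0$, and the chosen basis is orthogonal so $e_1e_2=-e_2e_1$. From these it follows that $(e_1e_2)^2=e_1e_2e_1e_2=-e_1^2e_2^2=0$, whence $(1+e_1e_2)(1-e_1e_2)=1-(e_1e_2)^2=1$. This confirms $x^{-1}=1-e_1e_2$ and in particular places $x$ in the group of units $\text{Cl}(\Phi)^{*}$.

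Next I would compute $\rho_x$. Because $e_1e_2$ is an even element, $\alpha(e_1e_2)=\alpha(e_1)\alpha(e_2)=(-e_1)(-e_2)=e_1e_2$, so $\alpha(x)=x$ and $\rho_x(v)=(1+e_1e_2)v(1-e_1e_2)$. The crucial step is to observe that $e_1e_2$ annihilates every vector on both sides: writing $v=v^1e_1+v^2e_2$, each of the products $e_1e_2e_i$ and $e_ie_1e_2$ reduces, after reordering factors via $e_1e_2=-e_2e_1$, to a scalar multiple of some $e_j^2=\Phi(e_j)1=0$, so that $e_1e_2v=ve_1e_2=0$ for every $v\in\mathbb{V}$. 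Expanding the conjugation and discarding every term in which $e_1e_2$ multiplies $v$ then leaves $\rho_x(v)=v$. Hence $\rho_x=\text{id}$, which in particular gives $\alpha(x)\mathbb{V}x^{-1}\subseteq\mathbb{V}$, so $x\in\Gamma(\Phi)$ and $x\in\ker\rho$.

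Finally, by Theorem \ref{baseClifford} the set $\{1,e_1,e_2,e_1e_2\}$ is a basis of $\text{Cl}(\Phi)$, so $e_1e_2$ is linearly independent from $1$ and $x=1+e_1e_2$ cannot be a scalar multiple of $1$; that is, $x\notin\mathbb{R}^{*}1$. The verification is entirely routine, and the one conceptual point worth emphasizing is the total isotropy of $\Phi$: it is exactly what forces $e_1e_2$ to annihilate $\mathbb{V}$ on both sides and thereby collapses the conjugation to the identity. This is precisely the mechanism that the $\mathbb{Z}_2$-grading argument in the proof of Lemma \ref{nucleo} excludes when $\Phi$ is non-degenerate, where the relations $e_i^2=\Phi(e_i)1\neq 0$ are what drive the kernel down to the scalars.
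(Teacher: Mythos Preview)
Your verification is correct and follows the same route as the paper: confirm $(e_1e_2)^2=0$ so that $x^{-1}=1-e_1e_2$, observe $\alpha(x)=x$, and show that $e_1e_2$ annihilates every $v\in\mathbb{V}$ on both sides so that the conjugation collapses to the identity. Your write-up is in fact a bit more careful than the paper's (you explicitly justify $\alpha(x)=x$, check both $e_1e_2v=0$ and $ve_1e_2=0$, and invoke Theorem \ref{baseClifford} for $x\notin\mathbb{R}^{*}1$), but the underlying computation is identical.
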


Las siguientes proposiciones utilizan el lema \ref{nucleo}, por lo que, en luz de la observación de arriba, solo tendrán validez en el caso de formas cuadráticas no degeneradas.

\begin{prop}
Si $x\in\Gamma(\Phi)$, entonces $N(x)\in\mathbb{R}^{*}1$
\end{prop}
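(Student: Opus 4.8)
The plan is to deduce the statement from Lemma~\ref{nucleo}. Concretely, I would show that $\overline{x}x$ lies in the kernel of the twisted adjoint action $\rho\colon\Gamma(\Phi)\to\text{Aut}(\mathbb{V})$; then Lemma~\ref{nucleo} forces $\overline{x}x\in\mathbb{R}^{*}1$, and from this $N(x)=x\overline{x}$ will follow immediately. The first thing to check is that $\overline{x}x\in\Gamma(\Phi)$, so that the lemma applies: by the previous proposition $\Gamma(\Phi)$ is stable under $\alpha$ and under $t$, hence under the Clifford conjugation $\overline{\phantom{x}}=\alpha\circ t$, so $\overline{x}\in\Gamma(\Phi)$; and $\Gamma(\Phi)$ is a group, so it is closed under products. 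In particular $\overline{x}$, and hence $\overline{x}x$, is invertible.

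The core of the argument is a single computation. I would fix $v\in\mathbb{V}$ and use that $w:=\alpha(x)\,v\,x^{-1}\in\mathbb{V}$ (since $x\in\Gamma(\Phi)$) together with the fact that the reversion $t$ is an antiautomorphism fixing every vector of $\mathbb{V}$, so $t(w)=w$. Expanding $t(w)$ and using $t(v)=v$ and $t\circ\alpha=\alpha\circ t$ (whence $t(\alpha(x))=\alpha(t(x))=\overline{x}$) gives
\begin{equation}
\alpha(x)\,v\,x^{-1}=t(x)^{-1}\,v\,\overline{x}\qquad\text{for all }v\in\mathbb{V}.
\end{equation}
Left-multiplying by $t(x)$ and right-multiplying by $\overline{x}^{-1}$ rewrites this as $t(x)\alpha(x)\,v\,(\overline{x}x)^{-1}=v$ for all $v\in\mathbb{V}$. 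The last observation is that $t(x)\alpha(x)=\alpha(\overline{x})\alpha(x)=\alpha(\overline{x}x)$, because $\alpha(\overline{x})=\alpha(\alpha(t(x)))=t(x)$; hence $\alpha(\overline{x}x)\,v\,(\overline{x}x)^{-1}=v$ for every $v\in\mathbb{V}$, i.e.\ $\rho_{\overline{x}x}=\mathrm{id}_{\mathbb{V}}$.

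Then, since $\overline{x}x\in\Gamma(\Phi)\cap\ker\rho$, Lemma~\ref{nucleo} gives $\overline{x}x=\lambda 1$ with $\lambda\in\mathbb{R}^{*}$; consequently $\overline{x}=\lambda x^{-1}$, and so
\begin{equation}
N(x)=x\overline{x}=x\,(\lambda x^{-1})=\lambda 1\in\mathbb{R}^{*}1,
\end{equation}
which as a bonus shows $x\overline{x}=\overline{x}x$. I expect the only genuinely delicate point to be the bookkeeping in the displayed computation: keeping the order of the factors correct when applying the antiautomorphism $t$, handling the interplay $t\circ\alpha=\alpha\circ t$, and correctly recognizing $t(x)\alpha(x)$ as $\alpha(\overline{x}x)$ — plus not forgetting to verify $\overline{x}x\in\Gamma(\Phi)$ before invoking Lemma~\ref{nucleo}.
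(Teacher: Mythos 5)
Tu propuesta es correcta y sigue esencialmente el mismo camino que la prueba del texto: aplicar $t$ a $\alpha(x)vx^{-1}\in\mathbb{V}$, deducir $\alpha(\overline{x}x)v(\overline{x}x)^{-1}=v$ y concluir con el Lema~\ref{nucleo} que $\overline{x}x\in\mathbb{R}^{*}1$. Incluso eres algo más cuidadoso que el texto al verificar previamente que $\overline{x}x\in\Gamma(\Phi)$ y al justificar explícitamente el paso de $\overline{x}x=\lambda 1$ a $N(x)=x\overline{x}=\lambda 1$.
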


\begin{proof}
La idea para demostrar esta proposición es, haciendo uso del lema previo, ver que $N(x)$ pertenece al núcleo de $\rho$ para todo $x$ en el grupo de Clifford. Sea $x\in\Gamma(\Phi)$, entonces $\alpha(x)vx^{-1}\in{\mathbb{V}}$ para todo $v\in\mathbb{V}$.

Entonces se cumple: $\mathbb{V}\ni\alpha(x)vx^{-1}={t(\alpha(x)vx^{-1})}=t(x^{-1})vt(\alpha(x))=t(x)^{-1}v\overline{x}$. Multiplicando por $t(x)$ a izquierda y por $\overline{x^{-1}}$ a derecha en estas igualdades se obtiene:

\begin{equation}
\begin{gathered}
v=t(x)\alpha(x)vx^{-1}\overline{x}^{-1}=t(x)\alpha(x)v(\overline{x}x)^{-1}=\\
=t(\alpha\circ\alpha(x))\alpha(x)v(\overline{x}x)^{-1}=\alpha(\overline{x}x)v(\overline{x}x)=\rho_{N(x)}(v),
\end{gathered}
\end{equation}
con lo cual $N(x)\in\ker(\rho)$, esto es $N(x)=\overline{x}x=x\overline{x}\in\mathbb{R}^{*}1$ para todo $x\in\Gamma(\Phi)$
\end{proof}

\begin{obs}
Notemos que esto implica que $\overline{x}\in\Gamma(\Phi)$ para todo $x\in\Gamma(\Phi)$, y que además si $\Phi$ no es una forma definida, $N(x)$ puede tener signo positivo o negativo, pues, por ejemplo $N(v)=v^2=\Phi(v)$ para cualquier $v\in\mathbb{V}$.
\end{obs}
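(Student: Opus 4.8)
El plan es demostrar por separado las dos afirmaciones de la observación, apoyándome en la proposición previa ($N(x)\in\mathbb{R}^*1$) y en resultados ya establecidos.

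Para la primera afirmación, $\overline{x}\in\Gamma(\Phi)$, la idea es reescribir la conclusión de la proposición anterior. Sé que $N(x)=x\overline{x}=\lambda 1$ con $\lambda\in\mathbb{R}^*$, de donde despejo $\overline{x}=\lambda x^{-1}$. Como $\Gamma(\Phi)$ es un grupo, $x^{-1}\in\Gamma(\Phi)$; y como $\lambda 1\in\mathbb{R}^*1=\ker\rho\subseteq\Gamma(\Phi)$ por el lema \ref{nucleo}, el producto $\overline{x}=(\lambda 1)\,x^{-1}$ pertenece a $\Gamma(\Phi)$ por ser éste cerrado frente al producto. Así concluyo $\overline{x}\in\Gamma(\Phi)$. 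Alternativamente, la proposición previa que muestra que $\alpha$ y $t$ son cerradas en $\Gamma(\Phi)$ da de inmediato que la conjugación de Clifford $x\mapsto\alpha\circ t(x)$ también lo es, pero seguiré la ruta vía la norma porque es la que sugiere el ``esto implica'' del enunciado.

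Para la segunda afirmación, primero calculo $N$ sobre un vector: como $t(v)=v$ y $\alpha(v)=-v$, resulta $\overline{v}=\alpha(t(v))=-v$, de modo que $N(v)=v\overline{v}=-v^2=-\Phi(v)1$ (el valor correcto lleva un signo menos respecto de la escritura del enunciado, pero esto no altera la conclusión). Por la proposición \ref{reflexiones}, todo vector anisotrópico pertenece a $\Gamma(\Phi)$, ya que para tal vector $\rho_v=s_v$ envía $\mathbb{V}$ en $\mathbb{V}$. El paso final usa que, estando en el caso no degenerado, si $\Phi$ no es definida su signatura $(p,q)$ satisface $p\geq 1$ y $q\geq 1$. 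Eligiendo una base ortogonal $\{e_1,\dots,e_n\}$ con $\Phi(e_1)>0$ y $\Phi(e_{p+1})<0$, ambos vectores son anisotrópicos y por lo tanto están en $\Gamma(\Phi)$; sus normas $N(e_1)=-\Phi(e_1)1$ y $N(e_{p+1})=-\Phi(e_{p+1})1$ tienen signos opuestos, lo que exhibe que $N$ toma valores tanto positivos como negativos.

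El principal obstáculo es menor: consiste únicamente en el control de signos en el cálculo de $N(v)$ y en recordar, vía la proposición \ref{reflexiones}, que los vectores anisotrópicos están efectivamente en $\Gamma(\Phi)$. Ninguna de las dos afirmaciones requiere cálculos sustantivos; ambas son consecuencias casi inmediatas de tres hechos ya disponibles: que $N(x)\in\mathbb{R}^*1$, que $\Gamma(\Phi)$ es un grupo que contiene a $\mathbb{R}^*1$, y que toda forma no degenerada e indefinida posee vectores anisotrópicos de ambos signos.
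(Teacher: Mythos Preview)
Tu argumento es correcto y completo. La observación en el artículo se enuncia sin demostración, de modo que no hay un ``método del paper'' contra el cual comparar; tu justificación vía $\overline{x}=\lambda x^{-1}$ con $\lambda 1\in\mathbb{R}^*1\subseteq\Gamma(\Phi)$ es exactamente la que sugiere el ``esto implica'' del texto, y tu ejemplo con vectores de base ortogonal de signo opuesto es la ilustración natural para la segunda parte.

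Un comentario menor: acertás al señalar el desliz de signo en el enunciado. El propio artículo calcula unas líneas antes $N(v)=-\Phi(v)1$, de modo que la expresión $N(v)=v^2=\Phi(v)$ de la observación es un error tipográfico; como bien indicás, esto no afecta la conclusión sobre los signos. Respecto a la pertenencia de los vectores anisotrópicos a $\Gamma(\Phi)$, la proposición~\ref{reflexiones} ya lo garantiza implícitamente (al afirmar que $\rho_x$ va de $\mathbb{V}$ en $\mathbb{V}$), así que tu referencia es legítima aunque el enunciado explícito aparezca más adelante en el texto.
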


\begin{prop}
La restricción de la norma $N$ a $\Gamma(\Phi)$, $N:\Gamma(\Phi)\rightarrow\mathbb{R}^{*}1$ es un morfismo de grupos y $N(\alpha(x))=N(x) \ \ \forall {x\in\Gamma(\Phi)}$.
\end{prop}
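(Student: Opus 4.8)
El plan es el siguiente. En primer lugar, la proposición anterior garantiza que $N(x)\in\mathbb{R}^{*}1$ para todo $x\in\Gamma(\Phi)$, de modo que $N$ efectivamente toma valores en $\mathbb{R}^{*}1$ al restringirse a $\Gamma(\Phi)$; además $\mathbb{R}^{*}1$ es un subgrupo del grupo de unidades $\text{Cl}(\Phi)^{*}$ y, lo que será crucial, $\mathbb{R}1$ está contenido en el centro del álgebra, pues todo múltiplo escalar de la identidad conmuta con cualquier elemento de $\text{Cl}(\Phi)$.

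Para ver que $N$ es morfismo de grupos tomo $x,y\in\Gamma(\Phi)$. Como $\Gamma(\Phi)$ es grupo, $xy\in\Gamma(\Phi)$ y tiene sentido escribir $N(xy)=xy\,\overline{xy}$. El primer paso es calcular $\overline{xy}$: usando que $t$ es antiautomorfismo y $\alpha$ es automorfismo, $\overline{xy}=\alpha(t(xy))=\alpha(t(y)t(x))=\alpha(t(y))\alpha(t(x))=\overline{y}\,\overline{x}$. Entonces $N(xy)=x(y\overline{y})\overline{x}=x\,N(y)\,\overline{x}$. Como $N(y)\in\mathbb{R}^{*}1$ es central, lo saco fuera del producto: $N(xy)=N(y)\,x\overline{x}=N(y)N(x)$; y como $N(x)$ y $N(y)$ son ambos escalares, conmutan, con lo cual $N(xy)=N(x)N(y)$.

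Para la segunda afirmación escribo $N(\alpha(x))=\alpha(x)\,\overline{\alpha(x)}$. El punto clave es reescribir $\overline{\alpha(x)}$: como la conjugación de Clifford satisface $\alpha\circ t=t\circ\alpha$ y $\alpha\circ\alpha=\mathrm{id}$, se tiene $\overline{\alpha(x)}=\alpha(t(\alpha(x)))=\alpha(\alpha(t(x)))=\alpha(\overline{x})$. Con esto, $N(\alpha(x))=\alpha(x)\alpha(\overline{x})=\alpha(x\overline{x})=\alpha(N(x))$, y dado que $N(x)\in\mathbb{R}^{*}1$ y $\alpha$ fija a $\mathbb{R}1$ (pues $\alpha(1)=1$), concluyo $N(\alpha(x))=N(x)$.

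No anticipo un obstáculo serio: la demostración es esencialmente manipulación cuidadosa de las involuciones canónicas junto con las identidades ya establecidas $\alpha(ab)=\alpha(a)\alpha(b)$, $t(ab)=t(b)t(a)$ y $\overline{\phantom{x}}=\alpha\circ t=t\circ\alpha$. El único aspecto delicado es recordar que $N(x)$ es un escalar —y por tanto central— lo que permite conmutarlo libremente dentro de los productos; esto depende de forma esencial de la proposición previa, la cual a su vez sólo vale para formas no degeneradas, de modo que la hipótesis de regularidad queda implícitamente en uso.
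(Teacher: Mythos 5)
Tu propuesta es correcta y sigue esencialmente el mismo camino que la prueba del texto: calcular $\overline{xy}=\overline{y}\,\overline{x}$, usar que $N(y)\in\mathbb{R}^{*}1$ es central para extraerlo del producto, y para la segunda parte usar $\overline{\alpha(x)}=\alpha(\overline{x})$ junto con $\alpha(N(x))=N(x)$ por ser $N(x)$ un escalar. Tu versión simplemente explicita pasos (la centralidad de $\mathbb{R}1$ y la dependencia de la proposición previa, válida sólo en el caso no degenerado) que el texto deja implícitos.
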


\begin{proof}
\begin{equation}
\begin{gathered}
N(xy)=xy\overline(xy)=xy\overline{y}\overline{x}=xN(y)\overline(x)=N(x)N(y)=N(y)N(x)\\
N(\alpha(x))=\alpha(x)\overline{\alpha(x)}=\alpha(x)\alpha\circ{t}\circ\alpha(x)=\alpha(x)\alpha(\overline{x})=\alpha(x\overline{x})=\alpha(N(x))=N(x)
\end{gathered}
\end{equation}
\end{proof}

En las proposiciones recién demostradas se ha calculado el núcleo de la representación adjunta torcida del grupo de Clifford y se puede apreciar que no es inyectiva. Ahora, ¿qué sucede con la suryectividad? Veremos a continuación que esta representación (acción) no es suryectiva y que de hecho cubre solo una parte bastante específica del grupo de automorfismos del espacio vectorial $\mathbb{V}$, esto es, sólo las isometrías de $\Phi$ en $\mathbb{V}$.

\begin{prop}
El conjunto de vectores no isotrópicos $\{v\in\mathbb{V}|\Phi(v)\neq{0}\}$ está contenido en el grupo de Clifford y además $\rho(\Gamma(\Phi))\subseteq{O(\Phi)}$.
\end{prop}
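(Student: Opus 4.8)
El plan es demostrar las dos afirmaciones por separado: primero que todo vector no isotrópico pertenece a $\Gamma(\Phi)$, y luego que la acción adjunta torcida de cualquier elemento de $\Gamma(\Phi)$ es una isometría. Trabajamos, como en las proposiciones previas de esta sección, con $\Phi$ no degenerada y $\mathbb{V}$ de dimensión finita, de modo que $\mathbb{V}\subseteq\text{Cl}(\Phi)$ vía $i$.

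Para la primera parte, sea $v\in\mathbb{V}$ con $\Phi(v)\neq 0$. Entonces $v^2=\Phi(v)\mathbf{1}$ muestra que $v$ es inversible, con $v^{-1}=\Phi(v)^{-1}v$, de modo que $v\in\text{Cl}(\Phi)^{*}$. Por la Proposición \ref{reflexiones}, la aplicación $u\mapsto\alpha(v)uv^{-1}$ coincide con la reflexión $s_v$ respecto del hiperplano $v^{\perp}$, que toma valores en $\mathbb{V}$; por lo tanto $\alpha(v)uv^{-1}\in\mathbb{V}$ para todo $u\in\mathbb{V}$, que es exactamente la condición que define a $\Gamma(\Phi)$. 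Así $\{v\in\mathbb{V}\mid\Phi(v)\neq 0\}\subseteq\Gamma(\Phi)$.

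Para la segunda parte, fijamos $x\in\Gamma(\Phi)$. Ya sabemos que $\rho_x$ es un automorfismo lineal de $\mathbb{V}$, así que basta verificar que $\Phi(\rho_x(v))=\Phi(v)$ para todo $v\in\mathbb{V}$. La herramienta clave es la norma: como $\rho_x(v)\in\mathbb{V}$, se tiene $N(\rho_x(v))=-\Phi(\rho_x(v))\mathbf{1}$ y análogamente $N(v)=-\Phi(v)\mathbf{1}$, con lo cual alcanza con probar $N(\rho_x(v))=N(v)$. Para ello calculo $N(\rho_x(v))=\rho_x(v)\,\overline{\rho_x(v)}$ directamente, usando que la conjugación de Clifford es un antiautomorfismo, que $\overline{v}=-v$, que $\overline{\alpha(x)}=t(x)$ (pues $\alpha$ y $t$ conmutan y son involuciones) y que, para $x\in\Gamma(\Phi)$, la norma $N(x)$ es un escalar central no nulo con $\overline{x}x=x\overline{x}=N(x)$ y $\alpha(x)t(x)=N(\alpha(x))=N(x)$; estas últimas son las proposiciones ya demostradas sobre $N$. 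Desarrollando $\overline{\alpha(x)vx^{-1}}=(\overline{x})^{-1}(-v)\,t(x)$ y agrupando los escalares centrales, aparece el factor $v^2=\Phi(v)\mathbf{1}$ y las normas $N(x)^{-1}$ y $N(x)$ se cancelan, quedando $N(\rho_x(v))=-\Phi(v)\mathbf{1}$. En consecuencia $\Phi(\rho_x(v))=\Phi(v)$, es decir $\rho_x\in\text{O}(\Phi)$, y por lo tanto $\rho(\Gamma(\Phi))\subseteq\text{O}(\Phi)$.

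El cálculo en sí es contabilidad rutinaria con las dos (anti)involuciones canónicas; el único punto que requiere algo de cuidado es justificar en cada paso que una norma $N(x)$ puede sacarse a través de los productos, lo que se apoya en el hecho ya establecido de que $N(x)\in\mathbb{R}^{*}\mathbf{1}$ para $x\in\Gamma(\Phi)$ (y es la razón por la que estos enunciados valen sólo para $\Phi$ no degenerada). No se espera ningún obstáculo genuino más allá de organizar esa cadena de identidades.
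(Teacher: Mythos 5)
Tu propuesta es correcta y sigue esencialmente el mismo camino que la prueba del texto: la primera inclusión se obtiene identificando $\rho_v$ con la reflexión $s_v$ de la Proposición \ref{reflexiones} (añadiendo, con buen criterio, la verificación explícita de que $v^{-1}=\Phi(v)^{-1}v$), y la segunda reduciendo la invariancia de $\Phi$ a la de $N$ y expandiendo $N(\alpha(x)vx^{-1})$ mediante las propiedades de la conjugación y el hecho de que $N(x)\in\mathbb{R}^{*}\mathbf{1}$ con $N$ morfismo sobre $\Gamma(\Phi)$. No hay diferencias sustanciales con el argumento del texto.
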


\begin{proof}
Sabemos que dado un vector no isotrópico $v$, entonces $\rho_v=s_v$ es la reflexión respecto del hiperplano ortogonal a $v$, esto es $\rho_v(w)\in\mathbb{V}, \ \ \forall{w\in\mathbb{V}}$. Por lo tanto $\nobreak{\{v\in\mathbb{V}|\Phi(v)\neq{0}\}\subseteq\Gamma(\Phi)}$.

Para ver que $\rho(\Gamma(\Phi))\subseteq{O(\Phi)}$, es suficiente con probar que $\Phi(\rho_x(v))=\Phi(v)$ $\forall{x\in\Gamma(\Phi)}$, $\forall{v\in\mathbb{V}}$. Recordemos que $\Phi(v)1=-N(v)$, con lo cual podemos reemplazar la invariancia de $\Phi$ por la invariancia de $N$. Esto es $N(\alpha(x)vx^{-1})=N(v) \forall{v\in\mathbb{V}} \ \forall{x\in\Gamma(\Phi)}$. En efecto, tenemos que:
\begin{equation}
\begin{gathered}
N(\alpha(x)vx^{-1})=\alpha(x)vx^{-1}\overline(\alpha(x)vx^{-1})=\alpha(x)vx^{-1}\overline{x^{-1}}\overline{v}\alpha(\overline{x})=\\
=N(x^{-1})N(v)N(\alpha(x))=N(x^{-1})N(x)N(v)=N(x^{-1}x)N(v)=N(v),
\end{gathered}
\end{equation}
donde hemos usado el hecho de que $N$ restringido a $\Gamma(\Phi)$ es un morfismo de grupos.
\end{proof}

Recordemos que en la sección \ref{motivacion} mencionamos y exploramos la relación entre los grupos $\text{SU}(2)$ y $\text{SO}(3)$. El álgebra de Clifford nos permite construir para cada grupo de isometrías $\text{SO}(p,q)$ un grupo al que denominaremos $\text{Spin}(p,q)$, que se relaciona con el primero de la misma manera en que $\text{SU}(2)$ lo hace con $\text{SO}(3)$ en el ejemplo motivante.

A este punto, hemos recolectado todo lo necesario para introducir este grupo. Más aún, también definiremos para cada espacio cuadrático $(\mathbb{V},\Phi_{p,q})$ el grupo $\text{Pin}(p,q)$ que guarda una relación con el grupo $\text{O}(p,q)$ análoga a la que se establece entre los grupos $\text{Spin}(p,q)$ y $\text{SO}(p,q)$. Estos grupos se pueden construir empleando herramientas del algebra de Clifford y es lo que haremos en las secciones siguientes.

\subsubsection{Los grupos Pin(n) y Spin(n)}

Esta sección será dedicada al estudio de métricas del tipo $\Phi(v)=-||v||^2$, donde $|| ||$ es la norma euclídea usual en $\mathbb{R}^n$. Notemos que en este caso, $N(v)=||v||^2$.

Recordemos la notación $\text{Cl}_n:=\text{Cl}_{0,n}$ y además que $\Gamma_n:=\Gamma_{0,n}$.

\begin{defn}
Definimos \emph{el grupo Pin$(n)$} como el núcleo de $ker(N)$ del homomorfismo $N:\Gamma_n\longrightarrow{\mathbb{R}^{*}1}$ y \emph{el grupo Spin$(n)$} como:
\begin{equation}
\text{Spin}(n)=\text{Pin}(n)\cap\Gamma_n^{+}=\text{Pin}(n)\cap{\text{Cl}_n^{0}}.
\end{equation}
\end{defn}

Notemos que $\ker(N)=\{x\in\Gamma_n|N(x)=1\}=\text{Pin}(n)$ y $\text{Spin}(n)=\{x\in\Gamma_n^{+}|N(x)=1\}$. En particular si $N(x)=x\overline{x}=1$ entonces $x^{-1}=\overline{x}$, con lo cual la condición de inversibilidad está contenida en $N(x)=1$.

Además podemos escribir a estos dos grupos como:
\begin{equation}
\text{Pin}(n)=\{x\in\text{Cl}_n|\alpha(x)\mathbb{V}x^{-1}\subseteq\mathbb{V} \ \ y \ \ x\overline{x}=1\}=\{x\in\text{Cl}_n|\alpha(x)\mathbb{V}\overline{x}\subseteq\mathbb{V} \ \ y \ \ x\overline{x}=1\}\\
\end{equation}
\begin{equation}
\text{Spin}(n)=\{x\in\text{Cl}_n^0|x\mathbb{V}x^{-1}\subseteq\mathbb{V} \ \ y \ \ x\overline{x}=1\}=\{x\in\text{Cl}_n^0|x\mathbb{V}\overline{x}\subseteq\mathbb{V} \ \ y \ \ x\overline{x}=1\}\\
\end{equation}

El siguiente teorema es el que permite la generalización del grupo $\text{SU}(2)$, en relación al grupo $\text{SO}(3)$.

\begin{teo}\label{PinSpinOSO(n)}
La restricción $\rho:\text{Pin}(n)\longrightarrow{\text{O}(n)}$ es un homomorfismo suryectivo de grupos con $\ker(\rho)=\{-1,1\}\cong\mathbb{Z}_2$. También se cumple que la restricción $\rho:\text{Spin}(n)\longrightarrow{\text{SO}(n)}$ es un homomorfismo suryectivo con el mismo núcleo $\ker(\rho)=\{-1,1\}\cong\mathbb{Z}_2$.
\end{teo}

\begin{proof}
Dado que $\rho(\Gamma_n)\subseteq\text{O}(n)$ tenemos una aplicación $\rho:\text{Pin}(n)\longrightarrow{\text{O}(n)}$. Además como $\text{Pin}(n)\underset{sg}{\subset}\Gamma_n$, entonces $\rho$ va a ser morfismo de grupos entre $\text{Pin}(n)$ y $\text{O}(n)$.

Por el teorema de Cartan-Dieudonné, toda isometría es la composición de a lo sumo $n$ reflexiones $s_{w_j}$ respecto de hiperplanos, $H_{w_j}$, con $\Phi(w_j)\neq{0}$. Como cada reflexión no depende del módulo del vector $w_j$ considerado, los elegimos con $\Phi(w_j)=-1$ (o equivalentemente $N(w_j)=1$). Entonces, dada una aplicación $f\in\text{O}(n)$, se cumple que $f=s_{w_1}\circ...\circ{s_{w_k}}$, con $k\leq{n}$. Tenemos entonces que:
\begin{equation}
s_{w_1}\circ...\circ{s_{w_k}}=f=\rho_{w_1}\circ...\circ{\rho_{w_k}}=\rho_{w_1...w_k},
\end{equation}
donde la última igualdad se deduce por el hecho de que $\rho$ es morfismo. Si calculamos la norma de $w_1...w_k$, teniendo en cuenta que cada $w_j$ está en el grupo de Clifford y que allí $N$ es un morfismo, tenemos $N(w_1...w_2)=N(w_1)...N(w_k)=1$. Esto es, $w_1...w_k\in\ker{N}=\text{Pin}(n)$, con lo cual $\rho:\text{Pin}(n)\longrightarrow{\text{O}(n)}$ es suryectiva.

El núcleo de $\rho$ está dado por:
\begin{equation}
\ker(\rho|_{\text{Pin}(n)})=\ker{(\rho)}\cap\text{Pin}(n)=\mathbb{R}^{*}1\cap\text{Pin}(n)=\{t\in\mathbb{R}^{*}1|N(t)=1\}=\{-1,1\},
\end{equation}
como queríamos demostrar.

Ahora queremos ver que $\rho(\text{Spin}(n))=\text{SO}(n)$ y que $\ker(\rho|_{\text{Spin}(n)})=\ker(\rho|_{\text{Pin}(n)})=\{-1,1\}$, para lo cual bastaría con ver que $\{-1,1\}\in\text{Spin}(n)$.

Sea $f\in\text{SO}(n)\subset\text{O}(n)$, entonces $\det(f)=1$. Por otro lado, $f=s_{w_1}\circ...\circ{s_{w_k}}$, con $\det(s_{w_j})=-1$ para cada $s_{w_j}$. Como $\det:\text{O}(n)\longrightarrow\{-1,1\}$ es un morfismo de grupos, entonces $\det(f)=\det(s_{w_1}\circ...\circ{s_{w_k}})=\det(s_{w_1})...\det(s_{w_k})=(-1)^{k}=1$, esto implica que $k$ es un número par. Así, $f=\rho_{w_1...w_k}$, con $k$ par, con lo cual $w_1...w_k\in{\text{Cl}_n^{0}}$  y además $N(w_1...w_k)=1$.

Concluimos entonces que $w_1...w_k\in\text{Pin}\cap\text{Cl}_n^{0}=\text{Pin}\cap\Gamma_{n}^{+}=\text{Spin}(n)$, esto es $\rho|_{\text{Spin}(n)}\rightarrow\text{SO}(n)$ es suryectiva. Además notemos que $\{-1,1\}\subset\Gamma_n^{+}$, con lo cual $\ker(\rho|_{\text{Spin}(n)})=\rho|_{\text{Pin}(n)}=\{-1,1\}$.
\end{proof} 

Definimos a la \emph{$(n-1)$-esfera unitaria}, $S^{n-1}=\{v\in\mathbb{V}|N(v)=1\}$.El teorema previo da lugar al siguiente corolario:

\begin{corol}\label{generadoporesfera}
El grupo $\text{Pin}(n)$ es generado como grupo por $S^{n-1}$ y todo elemento en $\text{Spin}(n)$ puede escribirse como producto de un número par de elementos en $S^{n-1}$
\end{corol}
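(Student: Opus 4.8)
El plan es obtener el corolario como consecuencia directa del Teorema \ref{PinSpinOSO(n)}, y más concretamente de la construcción explícita de preimágenes que aparece en su demostración. Antes que nada conviene verificar que $S^{n-1}\subseteq\text{Pin}(n)$, para que tenga sentido hablar del subgrupo que genera: si $v\in S^{n-1}$, esto es $N(v)=1$, entonces $\Phi(v)=-1\neq 0$, así que $v$ pertenece al grupo de Clifford $\Gamma_n$ (por la proposición según la cual todo vector no isotrópico está en $\Gamma(\Phi)$), y como $N(v)=1$ se concluye $v\in\ker N=\text{Pin}(n)$.

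Para la primera afirmación, dado $x\in\text{Pin}(n)$ tomaría su imagen $f:=\rho(x)\in\text{O}(n)$. Por el teorema de Cartan-Dieudonné \ref{teo-cd} se escribe $f=s_{w_1}\circ\cdots\circ s_{w_k}$ con $k\leq n$, y como $s_{\lambda w}=s_w$ puedo normalizar cada vector normal de modo que $N(w_j)=1$, esto es $w_j\in S^{n-1}$. Por la Proposición \ref{reflexiones}, $s_{w_j}=\rho_{w_j}$, y como $\rho$ es morfismo de grupos sobre $\text{Pin}(n)$ resulta $f=\rho_{w_1\cdots w_k}$ con $y:=w_1\cdots w_k\in\text{Pin}(n)$. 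Entonces $\rho(xy^{-1})=\rho(x)\rho(y)^{-1}=\text{id}$, luego $xy^{-1}\in\ker(\rho|_{\text{Pin}(n)})=\{-1,1\}$, es decir $x=y$ o $x=-y$. Si $x=y$, ya es producto de elementos de $S^{n-1}$. Si $x=-y$, uso que todo $w\in S^{n-1}$ cumple $w^2=\Phi(w){\bf 1}=-{\bf 1}$, de modo que $x=-y=w_1\cdot w_1\cdot(w_1\cdots w_k)$ exhibe a $x$ como producto de $k+2$ elementos de $S^{n-1}$. El caso $f=\text{id}$ (o sea $k=0$) se trata igual, escribiendo ${\bf 1}=w^4$ y $-{\bf 1}=w^2$ con cualquier $w\in S^{n-1}$ (que existe porque $n\geq 1$).

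Para la segunda afirmación repetiría lo anterior partiendo de $x\in\text{Spin}(n)$: ahora $f=\rho(x)$ cae en $\text{SO}(n)$, y como $\det s_{w_j}=-1$ mientras $\det f=1$, el número $k$ de reflexiones es par. Entonces $y=w_1\cdots w_k$ es producto de un número par de elementos de $S^{n-1}$; y si al comparar con $x$ resulta $x=-y$, lo reescribo como $w_1\cdot w_1\cdot(w_1\cdots w_k)$, que añade dos factores y deja la cantidad par. Así todo elemento de $\text{Spin}(n)$ es producto de una cantidad par de elementos de $S^{n-1}$.

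No anticipo obstáculos serios: lo esencial (suryectividad de $\rho$ sobre $\text{O}(n)$ y $\text{SO}(n)$, cálculo del núcleo $\{-1,1\}$ e identificación $\rho_v=s_v$) ya está demostrado en el Teorema \ref{PinSpinOSO(n)} y en la Proposición \ref{reflexiones}. El único detalle que requiere algo de cuidado es reabsorber el signo $-{\bf 1}$ en un producto de vectores de $S^{n-1}$ mediante la identidad $w^2=-{\bf 1}$, comprobando que esto agrega un número par de factores y por tanto no rompe la paridad exigida en el caso de $\text{Spin}(n)$.
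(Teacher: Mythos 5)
Tu propuesta es correcta y sigue esencialmente el camino que el texto deja implícito: el corolario se deduce de la suryectividad de $\rho$ sobre $\text{O}(n)$ y $\text{SO}(n)$ probada en el Teorema \ref{PinSpinOSO(n)} junto con $\ker(\rho|_{\text{Pin}(n)})=\{-1,1\}$, la identificación $\rho_v=s_v$ y Cartan-Dieudonné. El único detalle que el texto no explicita, la reabsorción del signo $-\bf{1}$ mediante $w^2=-\bf{1}$ con $w\in S^{n-1}$ (que agrega dos factores y preserva la paridad), lo resuelves correctamente.
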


A continuación calcularemos a modo de ejemplo los grupos $\text{Spin}(n)$ y $\text{Pin}(n)$ para algunos casos

\begin{ejmp}
Calculemos $\text{Pin}(1)$. Si recordamos del ejemplo \ref{ejemplos-algebrasdeclifford}, $\text{Cl}_1\cong\mathbb{C}$. En este caso, $S^{n-1}=\{-i,i\}$, con lo cual $\text{Pin}(1)=\langle{i}\rangle=\{i,-i,1,-1\}\cong\mathbb{Z}_{4}$. Además tenemos que $\text{Spin}(1)=\{-1,1\}\cong\mathbb{Z}_2$.
\end{ejmp}

\begin{ejmp}\label{PinSpin2}
Consideremos el caso de $\text{Pin}(2)$. Recordemos del ejemplo \ref{ejemplos-algebrasdeclifford} que $\text{Cl}_2\cong\mathbb{H}$, bajo la identificación:
\begin{equation}
1\leftrightarrow{1} \ \ , \ \ e_1\leftrightarrow{\mathbf{i}} \ \ , \ \ e_2\leftrightarrow{\mathbf{j}} \ \ , \ \ e_1e_2\leftrightarrow{\mathbf{k}}
\end{equation}

En este caso, $S^{1}=\{a{\mathbf{i}}+b{\mathbf{j}} | a^2+b^2=1\}\subset{\text{Cl}_n^{1}}$, con lo cual, si tomamos dos elementos en $S^{1}$, $a{\mathbf{i}}+b{\mathbf{j}}$ y $c{\mathbf{i}}+d{\mathbf{j}}$, tenemos que el producto de estos dos está dado por:
\begin{equation}
(a{\mathbf{i}}+b{\mathbf{j}})(c{\mathbf{i}}+d{\mathbf{j}})=-(ac+bd)1+(ad-bc){\mathbf{k}},
\end{equation}
donde se cumple que:
\begin{equation}
(ac+bd)^2+(ad-bc)^2=a^2c^2+\cancel{2abcd}+b^2d^2+a^2d^2\cancel{-2abcd}+b^2c^2=(a^2+b^2)(c^2+d^2)=1.
\end{equation}
Podemos decir entonces que los $(a{\mathbf{i}}+b{\mathbf{j}})(c{\mathbf{i}}+d{\mathbf{j}})=(\alpha1+\beta{\mathbf{k}})$, con $\alpha^2+\beta^2=1$.

Si consideramos ahora el producto de tres elementos en $S^{1}$, obtenemos:
\begin{equation}
\begin{gathered}
(a{\mathbf{i}}+b{\mathbf{j}})(c{\mathbf{i}}+d{\mathbf{j}})(e{\mathbf{i}}+f{\mathbf{j}})=(\alpha1+\beta{\mathbf{k}})(e{\mathbf{i}}+f{\mathbf{j}})=\\
=(\alpha{e}+\beta{f}){\mathbf{i}}+(\alpha{f}-\beta{e}){\mathbf{j}}.
\end{gathered}
\end{equation}
Se cumple que:
\begin{equation}
\begin{gathered}
(\alpha{e}+\beta{f})^2+(\alpha{f}-\beta{e})^2=\alpha^2e^2+\cancel{2\alpha\beta{ef}}+\beta^2f^2+\alpha^2f^2\cancel{-2\alpha\beta{fe}}+\beta^2{e^2}=\\
=(\alpha^2+\beta^2)(e^2+f^2)=1,
\end{gathered}
\end{equation}
con lo cual este elemento vuelve a caer en $S^{1}$. Concluimos entonces que:
\begin{equation}
\text{Pin}(2)=\{a{\mathbf{i}}+b{\mathbf{j}} | a^2+b^2=1\}\cup\{a1+b{\mathbf{k}} | a^2+b^2=1\},
\end{equation}
y que:
\begin{equation}
\text{Spin}(2)=\{a1+b{\mathbf{k}} | a^2+b^2=1\}\cong\text{U}(1).
\end{equation}
Vemos que $\text{Spin}(2)\cong\text{U}(1)$, con lo cual $\text{Spin}(2)$ es un grupo de Lie.
\end{ejmp}

\begin{ejmp}
En este ejemplo calcularemos los grupos Pin$(3)$ y Spin$(3)$ y veremos que, efectivamente, el grupo Spin$(3)$ es isomorfo al grupo SU$(2)$ (y por lo tanto al grupo de cuaternios unitarios) como se expuso en la sección \ref{motivacion}.

Se puede ver que $\text{Cl}_{3}\cong\mathbb{H}\oplus\mathbb{H}$ (por ejemplo en la sección 16.1, página 211 en \cite{Lounesto}), bajo la siguiente correspondencia:

\begin{equation}\label{Cl3cuaternios1}
\begin{gathered}
1\mapsto(1,1) \ \ , \ \ e_1\mapsto(-i,i) \\
e_2\mapsto(j,-j) \ \ , \ \ e_3\mapsto(k,-k).
\end{gathered}
\end{equation}
Se deriva entonces:
\begin{equation}\label{Cl3cuaternios2}
\begin{gathered}
e_{1}e_2\mapsto(k,k) \ \ , \ \ e_{2}e_3\mapsto(i,i) \\
e_{3}e_1\mapsto(j,j) \ \ , \ \ e_{1}e_{2}e_3\mapsto(-1,1).
\end{gathered}
\end{equation}
Vamos a obtener los grupos Pin$(n)$ y Spin$(n)$ utilizando el corolario \ref{generadoporesfera}, con lo cual necesitaremos calcular $S^2$.
\begin{equation}
S^2=\{ae_1+be_2+ce_3|a^2+b^2+c^2=1\},
\end{equation}
en el lenguaje de la sección \ref{motivacion}, son los cuaternios puros de norma $1$.

¿Cómo son los elementos generados por $S^2$ como grupo? Si multiplicamos dos elementos en $S^2$ obtenemos:
\begin{equation}
(ae_1+be_2+ce_3)(de_1+fe_2+ge_3)=k_{1}1+k_{2}e_{1}e_{2}+k_{3}e_{2}e_{3}+k_{4}e_{1}e_{3},
\end{equation}
donde se cumple $k_{1}^2+k_{2}^2+k_{3}^2+k_{4}^2=1$. Esto se puede ver de forma similar a como se hizo para calcular Spin$(2)$. Si consideramos un producto de tres elementos en $S^2$ obtenemos uno de la forma:
\begin{equation}
h_{1}e_{1}+h_{2}e_{2}+h_{3}e_{3}+h_{4}e_{1}e_{2}e_{3}\ ,
\end{equation}
donde $h_{1}^2+h_{2}^2+h_{3}^2+h_{4}^2=1$. De aquí deducimos:
\begin{equation}
\text{Pin}(3)=\{a1+be_{1}e_2+ce_{2}e_3+de_{1}e_{3}|a^2+b^2+c^2+d^2=1\}\cup\{ae_1+be_{2}+ce_3+de_{1}e_{2}e_{3}|a^2+b^2+c^2+d^2=1\}.
\end{equation}
Haciendo la intersección entre Pin$(3)$ y $\text{Cl}_{3}^0$, obtenemos el grupo Spin$(3)$:
\begin{equation}
\text{Spin}(3)=\{a1+be_{1}e_2+ce_{2}e_3+de_{1}e_{3}|a^2+b^2+c^2+d^2=1\}.
\end{equation}
Si usamos las ecuaciones \ref{Cl3cuaternios1} y \ref{Cl3cuaternios2} vemos que el subespacio generado por $\{1,e_{1}e_{2},e_{2}e_{3},e_{3}e_{1}\}$ es isomorfo al álgebra de cuaternios $\mathbb{H}$, con lo cual el grupo Spin$(3)$ cumple:
\begin{equation}
\text{Spin}(3)\cong\{a1+bi+cj+dk\in\mathbb{H}|a^2+b^2+c^2+d^2=1\}.
\end{equation}
Tal como vimos en la sección \ref{motivacion}, el grupo Spin asociado al espacio tridimensional euclídeo es el grupo de cuaternios unitarios que, como ya vimos, es isomorfo al grupo SU$(2)$. Esto es Spin$(3)\cong$SU$(2)$.
\end{ejmp}

\subsubsection{Los grupos Pin(p,q) y Spin(p,q)}

Como mencionamos en la sección \ref{espacioscuad}, si $\Phi$ es una métrica no degenerada de signatura $(p,q)$, entonces existe una base donde la misma se escribe:
\begin{equation}
\Phi(x_1,...,x_{p+q})=\Phi_{p,q}(x_1,...,x_{p+q}):=x_1^2+...+x_{p}^2-(x_{p+1}+...+x_{p+q}).
\end{equation}
O$(p,q)$ será el grupo de isometrías con respecto a la forma cuadrática $\Phi_{p,q}$, esto es:
\begin{equation}
\text{O}(p,q)=\{f\in{\text{Aut}(\mathbb{V})}\ | \ \Phi_{p,q}(f(v))=\Phi(v) \ \forall{v}\in\mathbb{V}\}.
\end{equation}
SO$(p,q)$ es el subgrupo de O$(p,q)$ de las isometrías con determinante $+1$, y además definimos la siguiente notación:
\begin{equation}
\text{Cl}_{p,q}=\text{Cl}(\Phi_{p,q}) \ \ ; \ \ \Gamma_{p,q}=\Gamma(\Phi_{p,q}) \ \ ;\ \ \Gamma^{+}_{p,q}=\Gamma^{+}(\Phi_{p,q}).
\end{equation}

En la sección anterior, debido a que la norma era una función definida positiva en $\Gamma$,  se definió al grupo Pin$(n)$ como el núcleo de la aplicación de la norma $N$. En el caso en que la forma cuadrática no es definida, no nos alcanzará el núcleo de la norma $N$ para cubrir a todo el espacio de isometrías, es por ello que en lugar de esa definición se adopta la siguiente:
\begin{defn}
El grupo Pin$(p,q)$ se define por:
\begin{equation}
\text{Pin}(p,q)=\{x\in\Gamma_{p,q}|N(x)\in\{-1,+1\}\},
\end{equation}
el grupo Spin$(p,q)$, está dado por Spin$(p,q)=\text{Pin}(p,q)\cap\Gamma^{+}_{p,q}$.
Con esto podemos decir que:
\begin{equation}
\begin{gathered}
\text{Pin}(p,q)=\{x\in\text{Cl}_{p,q}\ | \ \alpha(x)v\overline{x}\in\mathbb{V} \ \forall{v}\in\mathbb{V} ; \ N(x)\in\{-1,+1\}\}\\
\text{Spin}(p,q)=\{x\in\text{Cl}_{p,q}^{0}\ | \ xv\overline{x}\in\mathbb{V} \ \forall{v}\in\mathbb{V} ; \ N(x)\in\{-1,+1\}\}
\end{gathered}
\end{equation}
\end{defn}
\begin{obs}
Dado que $\alpha(x)v\overline{x}\in\mathbb{V}$ implica que $\alpha(\alpha(x)v\overline{x})=-\alpha(x)v\overline{x}\in\mathbb{V}$, se tiene:
\begin{equation}
\alpha(\alpha(x)v\overline{x})=x\alpha(v)t(x)=-xvt(x)=-\alpha(x)v\overline{x}\ \implies \ xvt(x)=\alpha(x)v\overline{x},
\end{equation}
con lo cual otra forma de definir al conjunto Pin es la siguente:
\begin{equation}
\text{Pin}(p,q)=\{x\in\text{Cl}_{p,q}\ | \ xvt(x)\in\mathbb{V} \ \forall{v}\in\mathbb{V} ; \ N(x)\in\{-1,+1\}\}\\.
\end{equation}
\end{obs}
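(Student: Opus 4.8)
El plan es completar la verificación iniciada en la observación, esto es, probar que el conjunto $S:=\{x\in\text{Cl}_{p,q}\mid xvt(x)\in\mathbb{V}\ \forall v\in\mathbb{V}\ \text{ y }\ N(x)\in\{-1,+1\}\}$ coincide con $\text{Pin}(p,q)=\{x\in\Gamma_{p,q}\mid N(x)\in\{-1,+1\}\}$. El ingrediente central es la identidad algebraica, válida para \emph{todo} $x\in\text{Cl}_{p,q}$ y todo $v\in\mathbb{V}$ (sin suponer $x\in\Gamma_{p,q}$):
\begin{equation}
\alpha\bigl(\alpha(x)v\overline{x}\bigr)=-xvt(x)\qquad\text{y, equivalentemente,}\qquad\alpha\bigl(xvt(x)\bigr)=-\alpha(x)v\overline{x},
\end{equation}
que se deduce sólo de que $\alpha$ es automorfismo, $t$ antiautomorfismo, $\alpha\circ t=t\circ\alpha$ es la conjugación de Clifford, $\alpha(v)=-v$ para $v\in\mathbb{V}$ y $\alpha\circ\alpha=\text{id}$; es esencialmente el mismo cálculo que figura en la observación.

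Primero usaría esta identidad para ver que, fijado $x$, la condición ``$xvt(x)\in\mathbb{V}$ para todo $v$'' equivale a ``$\alpha(x)v\overline{x}\in\mathbb{V}$ para todo $v$''. En efecto, si $w:=xvt(x)\in\mathbb{V}$ entonces $\alpha(w)=-w$, porque $\alpha$ restringida a $\mathbb{V}$ es $-\text{id}$; por otra parte, la identidad da $\alpha(w)=-\alpha(x)v\overline{x}$, y comparando se obtiene $\alpha(x)v\overline{x}=xvt(x)\in\mathbb{V}$. La implicación recíproca es simétrica, usando la otra forma de la identidad. Con esto $S=\{x\in\text{Cl}_{p,q}\mid \alpha(x)v\overline{x}\in\mathbb{V}\ \forall v,\ N(x)\in\{-1,+1\}\}$.

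En segundo lugar me ocuparía de la inversibilidad, que es lo que permite pasar de $\overline{x}$ a $x^{-1}$ y recuperar la pertenencia a $\Gamma_{p,q}$. Si $N(x)=x\overline{x}=\varepsilon$ con $\varepsilon\in\{-1,+1\}$, entonces $x$ tiene inverso a derecha $\varepsilon^{-1}\overline{x}=\varepsilon\overline{x}$; como $\text{Cl}_{p,q}$ es de dimensión finita, un inverso unilateral es bilátero, de modo que $x\in\text{Cl}_{p,q}^{*}$ y $x^{-1}=\varepsilon\overline{x}$. Por lo tanto $\alpha(x)vx^{-1}=\varepsilon\,\alpha(x)v\overline{x}$, que está en $\mathbb{V}$ exactamente cuando lo está $\alpha(x)v\overline{x}$, por ser $\mathbb{V}$ un subespacio. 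Combinando ambos pasos: $x\in S$ si y sólo si $x$ es inversible, $\alpha(x)vx^{-1}\in\mathbb{V}$ para todo $v$ y $N(x)\in\{-1,+1\}$; es decir, $x\in\Gamma_{p,q}$ con $N(x)\in\{-1,+1\}$, o sea $x\in\text{Pin}(p,q)$.

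El único punto que exige algo de cuidado --- el obstáculo, modesto en este caso --- es no olvidar la inversibilidad: la descripción alternativa suprime el requisito explícito $x\in\text{Cl}_{p,q}^{*}$ que está incorporado en la definición de $\Gamma_{p,q}$, así que hay que verificar que la condición $N(x)\in\{-1,+1\}$ ya lo garantiza, y que el signo $\varepsilon$ no afecta la conclusión porque $\mathbb{V}$ es cerrado por múltiplos escalares. Todo lo demás es el manejo de $\alpha$, $t$ y la conjugación que la observación ya efectúa. Un razonamiento enteramente análogo, restringiendo a $\text{Cl}_{p,q}^{0}$ (donde además $\alpha(x)=x$), produce la descripción correspondiente de $\text{Spin}(p,q)$.
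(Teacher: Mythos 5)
Tu propuesta es correcta y sigue esencialmente el mismo cálculo que la observación del texto: la identidad $\alpha(\alpha(x)v\overline{x})=-xvt(x)$ combinada con el hecho de que $\alpha$ actúa como $-\mathrm{id}$ sobre $\mathbb{V}$. De hecho eres más cuidadoso que el texto, que sólo verifica una dirección de la equivalencia y no comenta por qué la condición $N(x)\in\{-1,+1\}$ ya garantiza la inversibilidad (vía $x^{-1}=\varepsilon\overline{x}$ y el hecho de que en dimensión finita un inverso unilateral es bilátero); ambas adiciones son pertinentes.
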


Al igual que en el caso de los grupos Pin$(n)$ y Spin$(n)$, aquí también existe un teorema análogo al \ref{PinSpinOSO(n)} y es el siguiente:

\begin{teo}
La restricción $\rho:\Gamma_{p,q}\rightarrow\text{O}(p,q)$ al grupo Pin$(p,q)$ es un homomorfismo suryectivo $\rho:\text{Pin}(p,q)\rightarrow\text{O}(p,q)$ con $\ker(\rho)=\{-1,+1\}$. Además, la restricción $\rho:\text{Spin}(p,q)\rightarrow\text{SO}(p,q)$ también es un morfismo suryectivo de grupos con el mismo núcleo $\ker(\rho)=\{-1,+1\}$. Dicho de otro modo, como Spin$(p,q)\subset$Pin$(p,q)$, $\{-1,+1\}\in$Spin$(p,q)$.
\end{teo}

\begin{proof}
Como el teorema de Cartán-Dieudonné vale para todo espacio cuadrático regular, la prueba de este teorema es análoga a la hecha para los grupos $Pin(n)$ y $Spin(n)$.
\end{proof}

\begin{corol}
El grupo Pin$(p,q)$ es generado por el conjunto $S^{n-1}_{p,q}$, definido por:
\begin{equation}
\text{S}^{n-1}_{p,q}=\{x\in\mathbb{V}\ |\ N(x)=\pm{1}\}.
\end{equation} 
Además, todo elemento en Spin$(p,q)$ es producto de un número par de elementos en $S^{n-1}_{p,q}$.
\end{corol}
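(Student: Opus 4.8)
The plan is to deduce the corollary from the surjectivity theorem just stated together with the Cartan--Dieudonné theorem (Theorem~\ref{teo-cd}), in complete parallel with the Euclidean case (Corollary~\ref{generadoporesfera}).

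First I would check the easy inclusion $S^{n-1}_{p,q}\subseteq\text{Pin}(p,q)$: if $N(v)=\pm1$ then $v$ is anisotropic, since $N(v)=-\Phi(v)1$, hence $v\in\Gamma_{p,q}$ by the proposition asserting that non-isotropic vectors lie in the Clifford group, and $N(v)\in\{-1,1\}$ by definition. As $\text{Pin}(p,q)$ is a group (it is $N^{-1}(\{-1,1\})$ with $\{-1,1\}$ a subgroup of $\mathbb{R}^{*}$), every product of elements of $S^{n-1}_{p,q}$ lies in $\text{Pin}(p,q)$, so the phrase ``generado por'' makes sense.

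For the reverse inclusion, take $x\in\text{Pin}(p,q)$; the case $\rho(x)=\text{id}$ is trivial since then $x\in\{-1,1\}$, so assume $\rho(x)\neq\text{id}$. Then $\rho(x)\in\text{O}(p,q)$, so Cartan--Dieudonné gives $\rho(x)=s_{w_1}\circ\cdots\circ s_{w_k}$ with $k\le n$ and each $w_j$ anisotropic. Since $s_{\lambda w}=s_{w}$ for $\lambda\neq0$ and $N(\lambda w)=\lambda^{2}N(w)$ with $N(w)\neq0$, I can rescale each $w_j$ so that $N(w_j)=\pm1$, i.e. $w_j\in S^{n-1}_{p,q}$. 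By Proposition~\ref{reflexiones} and the fact that $\rho$ is a group morphism, $\rho(w_1\cdots w_k)=s_{w_1}\circ\cdots\circ s_{w_k}=\rho(x)$, and $w_1\cdots w_k\in\text{Pin}(p,q)$ by the previous step; hence $x(w_1\cdots w_k)^{-1}\in\ker(\rho|_{\text{Pin}(p,q)})=\{-1,1\}$, so $x=\pm\,w_1\cdots w_k$. To absorb the sign I would observe that for any fixed $v\in S^{n-1}_{p,q}$ one has $v^{2}=\Phi(v)1=\mp1$, so $-1=v\cdot v$ or $-1=v\cdot(-v)$ (and $-v\in S^{n-1}_{p,q}$ since $N(-v)=N(v)$); in either case $-1$ is a product of two elements of $S^{n-1}_{p,q}$. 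Thus $x$ is a product of elements of $S^{n-1}_{p,q}$.

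For the Spin assertion, let $x\in\text{Spin}(p,q)=\text{Pin}(p,q)\cap\text{Cl}^{0}_{p,q}$ and write $x=\epsilon\,w_1\cdots w_k$ as above with $\epsilon\in\{-1,1\}$ and each $w_j\in S^{n-1}_{p,q}\subseteq\mathbb{V}\subseteq\text{Cl}^{1}_{p,q}$. Since $\epsilon\in\text{Cl}^{0}_{p,q}$ and $x\in\text{Cl}^{0}_{p,q}$, the nonzero element $w_1\cdots w_k$, being homogeneous of degree $k\bmod 2$, must have $k$ even; if $\epsilon=-1$ one appends the two extra factors from the previous paragraph, keeping the count even. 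I expect the only delicate points to be the rescaling into $S^{n-1}_{p,q}$ — which only yields $N(w_j)=\pm1$ rather than $=1$, precisely because $N$ need not be positive when $\Phi$ is indefinite, and which is exactly why the indefinite $\text{Pin}$/$\text{Spin}$ groups are defined with $N(x)\in\{-1,1\}$ — and the bookkeeping of the sign $-1$; everything else is inherited verbatim from the surjectivity theorem and the Euclidean corollary.
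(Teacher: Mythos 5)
Tu propuesta es correcta y sigue esencialmente el mismo camino que el artículo emplea (implícitamente) para este corolario: es la adaptación directa de la prueba del teorema \ref{PinSpinOSO(n)} vía Cartan--Dieudonné, con la normalización $N(w_j)=\pm1$ en lugar de $N(w_j)=1$ y el cuidado adicional de absorber el signo $-1$ como producto de dos elementos de $S^{n-1}_{p,q}$, detalle que el artículo omite y que tú tratas correctamente. No hay lagunas.
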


\begin{ejmp}
Ya vimos en el ejemplo \ref{ejemplos-algebrasdeclifford}, que el álgebra de Clifford $\text{Cl}_{1,0}=\mathbb{R}\oplus\mathbb{R}$ y llamamos al conjunto $\mathbb{R}\oplus\mathbb{R}$ los números complejos hiperbólicos o números de Study. Una forma de definir estos números, que imita a la definición de números complejos, es considerar al espacio vectorial $\mathbb{R}$, que tiene a $1$ como base e incorporar un elemento $j$ que sería una ``unidad imaginaria'' que cumple $j^2=1$. En este caso $\text{Cl}_{1,0}=\mathbb{R}1\oplus\mathbb{R}j$, con $j=i_{\Phi}(e_1)$. Teniendo esto en cuenta, tenemos:
\begin{equation}
{S}^{1}_{1,0}=\{j,-j\},
\end{equation}
con lo cual Pin$(1,0)=\{j,j^2,-j,-j^2\}=\{j,-j,1,-1\}\cong\mathbb{Z}_{2}\times\mathbb{Z}_{2}$. El ismorfismo se puede ver porque Pin$(1,0)$ es un grupo finito de orden $4$ y todos sus elementos, excepto el neutro, son de orden $2$. Además $\text{Spin}(1,0)\cong\mathbb{Z}_{2}$.
\end{ejmp}

\begin{ejmp}\label{Cl20}
Calculemos los grupos $\text{Pin}(2,0)$ y $\text{Spin}(2,0)$. Se puede ver que $\text{Cl}_{2,0}\cong{\text{M}}_{2}(\mathbb{R})$, las matrices reales cuadradas de $2\times{2}$ (sección 16.1 de \cite{Lounesto}, página 207), bajo la correspondencia:
\begin{equation}\label{Cl2_matriz}
\bold{1}\mapsto \begin{pmatrix}
		1 \ & 0\\
		 0 \ & 1\\
	       \end{pmatrix}
;\quad
\bold{e_1} \mapsto \begin{pmatrix}
		0 \ & 1\\
		 1 \ & 0\\
	       \end{pmatrix}
;\quad
{\bold{e_2}}\mapsto \begin{pmatrix}
		1 \ & 0\\
		 0 \ & -1\\
	       \end{pmatrix}
;\quad
\bold{e_1e_2}\mapsto \begin{pmatrix}
		0 \ & -1\\
		 1 \ & 0\\
	       \end{pmatrix}.
\end{equation}
Tenemos entonces que $S^{1}_{2,0}=\{ae_1+be_2\ | \ a^2+b^2=1\}$.
Se puede ver que:
\begin{equation}
\begin{gathered}
(ae_1+be_2)(ce_1+de_2)=\alpha{1}+\beta{e_{1}e_2}\text{, con }\alpha^2+\beta^2=1,\\
(\alpha{1}+\beta{e_{1}e_2})(ae_1+be_2)=ce_1+de_2\text{, con }c^2+b^2=1.
\end{gathered}
\end{equation}
Con esto se ve que 
\begin{equation}
\begin{gathered}
\text{Pin}(p,q)=\{ae_1+be_2\ | \ a^2+b^2=1\}\cup\{a1+be_{1}e_2\ | \ a^2+b^2=1\},\\
\text{Spin}(p,q)=\{a1+be_{1}e_2\ | \ a^2+b^2=1\}\cong{\text{U}(1)}.
\end{gathered}
\end{equation}
Si recordamos el ejemplo \ref{PinSpin2}, teníamos que $\text{Spin}(2,0)\cong\text{U}(1)$. Entonces, vemos que $\text{Spin}(2,0)=\text{Spin}(0,2)$. Se puede probar que $\text{Cl}_{p,q}^{0}\cong\text{Cl}_{q,p}^{0}$ para cualesquiera $p$ y $q$, con lo cual $\text{Spin}(p,q)=\text{Spin}(q,p)$, sin embargo como $\text{Cl}_{p,q}\ncong\text{Cl}_{q,p}$, entonces $\text{Pin}(p,q)\ncong\text{Pin}(q,p)$.
\end{ejmp}

\newpage
\section{Espinores Algebraicos. Aplicaciones a la física}

El objetivo de esta sección es la introducción del concepto de espinores algebraicos y su aplicación a la física. Para ello primero será necesario definir las representaciones regulares irreducibles de un álgebra y conocer ciertas propiedades de la misma. Esto último ocupará las primeras dos partes de esta sección y la última estará avocada a la definición de los espinores y las aplicaciones. A lo largo de esta sección se siguen principalmente los capítulos 4 y 6 de \cite{VazdaRocha}.

\subsection{Representaciones regulares irreducibles}\label{repreg}

Dado que un álgebra es un espacio vectorial en sí mismo, podemos hablar de representaciones de un álgebra que tienen por espacio de representación a la misma álgebra. Ese es el caso de las representaciones regulares.

\begin{defn}\label{repreg}
Dada un álgebra $A$, $B$ un subespacio de $A$ y la función $L:A\rightarrow{\text{End}{(B)}}$, dada por:
\begin{equation}
L_{a}\in\text{End}{(B)}\text{ tal que }{L}_{a}{(b)}=ab \ \ \forall{b}\in{B},
\end{equation}
el par (L,B) se dice que es una \emph{representación regular izquierda}.
De manera análoga una representación se dice \emph{representación regular derecha} si tiene por espacio de representación a $B$ un subespacio de $A$ y por función a $R:A\rightarrow{\text{End}{(B)}}$, donde: 
\begin{equation}
R_{a}\in\text{End}{(B)}\text{ tal que }{R}_{a}{(b)}=ba \ \ \forall{b}\in{B}.
\end{equation}
\end{defn}

\begin{obs}
Notemos que dada esta definición, no cualquier subespacio de $A$ es un posible espacio de representación. Una posibilidad obvia es tomar $B=A$ y esto va a ser una representación de $A$ sobre $A$, pero para subespacios propios hay que tener cierto cuidado, dado que el mismo deberá cumplir ciertas condiciones que veremos a continuación.

En el caso de una representación regular izquierda, dados $a\in{A}$ y $b\in{B}$, si $L_{a}\in\text{End}(B)$ entonces $ab\in{B}$ para todos $a\in{A}, b\in{B}$. Una forma equivalente de expresar esto es decir que $AB\subset{B}$, es decir $B$ es un \emph{ideal izquierdo de $A$}. Análogamente, se ve que todos los posibles espacios de representación regulares derechos de $A$ son \emph{ideales derechos de A}. 

Qué sucede si a esta representación izquierda le pedimos además que sea irreducible. En ese caso, dado $S$ un subespacio de $B$, se cumple que $L_{a}(S)\subset{S}$ si y solo si $S=\{0\}$ o $S=B$. Equivalentemente podemos decir que $AS\subset{S}$ si y sólo si $S=\{0\}$ o $S=B$, pero ésta es la definición de \emph{ideal minimal izquierdo}, es decir un ideal de $A$ que no tiene subideales propios. Luego, el espacio de representación de toda representación regular izquierda irreducible es siempre un ideal minimal izquierdo de $A$. Análogamente se puede ver que los espacios de representación regulares derechos de $A$ son ideales minimales derechos de $A$.
\end{obs}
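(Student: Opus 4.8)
El plan es demostrar las dos equivalencias contenidas en la observación: que un subespacio $B\subseteq A$ admite una representación regular izquierda $(L,B)$ si y sólo si $B$ es un ideal izquierdo de $A$, y que dicha representación es irreducible si y sólo si $B$ es un ideal minimal izquierdo; los enunciados para representaciones regulares derechas se seguirán por un argumento simétrico.

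Primero trataría la pregunta de \emph{qué} subespacios sirven. Si $(L,B)$ es una representación regular izquierda en el sentido de la Definición \ref{repreg}, entonces por hipótesis $L_a\in\text{End}(B)$ para todo $a\in A$, de modo que $L_a(b)=ab\in B$ para todos $a\in A$, $b\in B$; luego $AB\subseteq B$, es decir, $B$ es un ideal izquierdo. Recíprocamente, si $B$ es un ideal izquierdo, entonces para cada $a\in A$ fijo la aplicación $b\mapsto ab$ lleva $B$ en $B$ y es $\mathbb{R}$-lineal (por la distributividad del producto y su compatibilidad con los escalares), así que $L_a\in\text{End}(B)$ está bien definida; además $L$ verifica los dos axiomas de representación lineal de la Definición \ref{replinealalgebra}: la asociatividad del producto da $L_{ab}(c)=(ab)c=a(bc)=L_a(L_b(c))$, luego $L_{ab}=L_a\circ L_b$, y la unidad de $A$ da $L_{1_A}(b)=1_A b=b$, luego $L_{1_A}=\text{id}_B$. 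Esto establece la primera equivalencia.

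A continuación leería la condición de irreducibilidad. Supongamos que $B$ es un ideal izquierdo, de modo que $(L,B)$ es una representación. Un subespacio $S\subseteq B$ es invariante frente a todo $L_a$ exactamente cuando $aS\subseteq S$ para todo $a\in A$, esto es, exactamente cuando $AS\subseteq S$; junto con $S\subseteq B$ esto dice precisamente que $S$ es un ideal izquierdo de $A$ contenido en $B$, es decir, un subideal de $B$. Por la definición de irreducibilidad de la Definición \ref{replinealalgebra}, $(L,B)$ es irreducible si y sólo si los únicos subespacios invariantes son $\{0\}$ y $B$, lo cual, por la correspondencia recién hecha, equivale a que los únicos ideales izquierdos de $A$ contenidos en $B$ sean $\{0\}$ y $B$, es decir, a que $B$ no tenga subideales propios no nulos: la definición de ideal minimal izquierdo. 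Los enunciados para representaciones regulares derechas se obtienen literalmente reemplazando $L_a(b)=ab$ por $R_a(b)=ba$ y ``ideal izquierdo'' por ``ideal derecho'' (ahora la condición es $BA\subseteq B$, resp.\ $SA\subseteq S$).

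No espero un obstáculo serio: el argumento es esencialmente una traducción de diccionario entre el vocabulario de representaciones (espacio de representación, subespacio invariante, irreducible) y el de teoría de anillos (ideal izquierdo, subideal, ideal minimal izquierdo). Los únicos puntos que requieren algo de cuidado son (a) comprobar que $L$ satisface efectivamente ambos axiomas de la Definición \ref{replinealalgebra}, donde la asociatividad y la existencia de $1_A$ son exactamente lo que se usa, y (b) el caso degenerado $B=\{0\}$, que conviene simplemente excluir al hablar de ideales minimales, como es habitual.
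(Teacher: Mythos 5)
Tu propuesta es correcta y sigue esencialmente el mismo camino que la observación del texto: traducir ``$L_a\in\text{End}(B)$ para todo $a$'' en $AB\subseteq B$ y ``subespacio invariante'' en ``subideal izquierdo'', de donde la irreducibilidad equivale a la minimalidad del ideal. El único añadido tuyo es la verificación explícita del recíproco (que todo ideal izquierdo da efectivamente una representación que cumple los axiomas de la Definición de representación lineal), lo cual el texto da por sobreentendido pero no cambia el argumento.
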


\subsection{Ideales izquierdos (o derechos), idempotentes y álgebras semisimples}

Esta sección pretende solamente mencionar algunos resultados importantes sobre álgebras semisimples (como es el caso de las álgebras de Clifford \cite{VazdaRocha}), sin ahondar en las demostraciones ni en los fundamentos. Para mayor profundidad en estos temas se puede consultar el capítulo 2 de \cite{Hazewinkel}, \cite{Lam}, y para los temas específicos de álgebras de Clifford \cite{VazdaRocha}, que es la referencia que se sigue en toda esta sección. 

Los resultados expresados en esta sección valen para cualquier anillo semismiple sin necesidad de imponer la estructura adicional de espacio vectorial que tiene un álgebra. Un conepto de gran importancia a lo largo de esta parte es el de idempotente.

\begin{defn}
Un \emph{idempotente} de una $\mathbb{K}$-álgebra $A$ es un elemento $f\in{A}$ que cumple $f^2=f$. Dos idempotentes $f$ y $g$ se dicen \emph{ortogonales} si $fg=gf=0$. Un idempotente $f$ se dice \emph{primitivo} si no existen idempotentes $g,h\in{A}$ ortogonales tales que $f=g+h$.
\end{defn}

Un resultado conocido es que en un álgebra (o anillo) semisimple (recordar definicion \ref{algsimple}) todo ideal (izquierdo o derecho) es un ideal principal generado por un idempotente, con lo cual, dada una representación regular izquierda, el espacio de representación está dado por $Af$ para algún idempotente $f$. Para el caso de una representación irreducible, es necesario y suficiente pedir que el idempotente $f$ sea primitivo. En efecto, si $f$ no se descompone en suma de idempotentes ortogonales, entonces el ideal $Af$ no tiene subideales, puesto que todo ideal es generado por un idempotente.

\begin{defn}
Un conjunto de idempotentes primitivos ortogonales $\{f_1,...,f_k\}$ que cumplen $f_1+...+f_k=1$ se denomina \emph{conjunto completo de idempotentes primitivos ortogonales (ccipo)}.
\end{defn}

\begin{ejmp}\label{ejemplomatriz}
Consideremos el álgebra $M_{3}(\mathbb{R})$. En esta álgebra las matrices que tienen algunas columnas (pero no todas) iguales a cero son ideales izquierdos no triviales, por ejemplo:
\begin{equation}
\begin{pmatrix}
		\mathbb{R} \ & \mathbb{R} \ & 0 \\
		\mathbb{R} \ & \mathbb{R} \ & 0 \\
		\mathbb{R} \ & \mathbb{R} \ & 0 \\
\end{pmatrix}
\end{equation}
es un ideal izquierdo, pero no es minimal. Los ideales minimales de este tipo son los que tienen sólo una columna no nula y son isomorfos como espacios vectoriales a $\mathbb{R}^{3}$.

El idempotente que genera al ideal que tiene solo la primera columna distinta de cero es el siguiente:
\begin{equation}
\begin{pmatrix}
		1 \ & 0 \ & 0 \\
		0 \ & 0 \ & 0 \\
		0 \ & 0 \ & 0 \\
\end{pmatrix}
.
\end{equation}
y el siguiente conjunto conforma un conjunto completo de idempotentes primitivos ortogonales para el álgebra:
\begin{equation}
\begin{pmatrix}
		1 \ & 0 \ & 0 \\
		0 \ & 0 \ & 0 \\
		0 \ & 0 \ & 0 \\
\end{pmatrix}
;\quad
\begin{pmatrix}
		0 \ & 0 \ & 0 \\
		0 \ & 1 \ & 0 \\
		0 \ & 0 \ & 0 \\ 
\end{pmatrix}
;\quad
\begin{pmatrix}
		0 \ & 0 \ & 0 \\
		0 \ & 0 \ & 0 \\
		0 \ & 0 \ & 1 \\
\end{pmatrix}
.
\end{equation}
Los ideales derechos generados por estos idempotentes son los que tienen alguna fila totalmente nula.

Se puede ver que para toda álgebra de matrices cuadradas de $d\times{d}$ con entradas en algún álgebra de división $\mathbb{K}$, los ideales minimales son isomorfos a $\mathbb{K}^{d}$. Este es un hecho que se rescata y utiliza en las álgebras de Clifford.
\end{ejmp}

Notemos que si existe un ccipo en un álgebra $A$, se tienen las siguientes descomposiciones del álgebra:
\begin{equation}\label{descomp}
\begin{gathered}
A=Af_1\oplus...\oplus{A}f_k, \\
A=f_{1}A\oplus...\oplus{f}_{k}A ,\\
A=\bigoplus_{i,j}{f}_{i}A{f}_{j},
\end{gathered}
\end{equation}
que podrían verse como aplicaciones de la \emph{descomposición de Peirce}\cite{Lam,Hazewinkel}. La primera descomposición es en suma directa de ideales izquierdos, mientras que la segunda es como suma directa de ideales derechos. 

Analicemos por un momento los sumandos directos en la tercera descomposición. Nos interesa ver qué clase de conjunto son los $f_{i}Af_{j}$. Definimos $A_{ij}:=f_{i}Af_{j}$, notemos que dados $i\neq{j}$ tenemos que si $a,b\in{A_{ij}}$ entonces se cumple que $ab=0$, puesto que los idempotentes distintos son ortogonales, con lo cual los conjuntos $A_{ij}$ son los que se conocen como \emph{pseudo-anillos de cuadrado nulo}; es decir que la regla de multiplicación es $ab=0$ para todo $a,b\in{A_{ij}}$ y no tiene un elemento neutro respecto del producto.

El caso de los conjuntos $A_{ii}$ es distinto, por ejemplo $f_{i}\in{A_{ii}}$ y $f_{i}^2=f_{i}\neq{0}$, con lo cual la regla de multiplicación es distinta. Sucede que si el álgebra $A$ es simple, los conjuntos $A_{ii}$ son \emph{álgebras de división} para cualquier $i\in\{1,...,k\}$, con la identidad en $A_{ii}$ dada por $f_{i}$. No se probará este hecho pero puede encontrarse una prueba en la página 105 de \cite{VazdaRocha}. 
En el caso semisimple, $A_{ii}$ es lo que se denomina un \emph{anillo doble}\cite{Lounesto}, es decir, $A_{ii}\cong\mathbb{K}\oplus\mathbb{K}$ con $\mathbb{K}$ un álgebra de división.

Volviendo a la cuestión de las representaciones, supongamos que elegimos uno de los ideales izquierdos en la primera descomposición, $Af_{i}$, con ${i}\in\{1,...,k\}$ y tomamos la representación regular izquierda que lo tiene por espacio de representación. Podemos preguntarnos si esta representación es esencialmente distinta a la que se obtiene al tomar otro de los ideales $Af_{j}$ con $i\neq{j}$. Es decir, ¿son $(L_{i},Af_{i})$ y $(L_{j},Af_{j})$ representaciones equivalentes? La respuesta es que sí, sin embargo, para probarlo debemos primero introducir nuevos conceptos. Comenzaremos revisando este hecho para el caso de álgebras simples dado que resulta más sencillo e instructivo.

\subsubsection{Caso de álgebras simples}

Analicemos el caso particular de un álgebra $A$ que es simple (no tiene ideales biláteros propios). En dicho caso sucede que para cada uno de los idempotentes $f_{i}$ se cumple que $Af_{i}A=A$. En efecto, como $A$ es simple y $Af_{i}A$ es un ideal bilátero no nulo (pues contiene a $f_{i}$), entonces dicha igualdad es la que vale. Teniendo en cuenta esto, deducimos:
\begin{equation}\label{algsimp1}
f_{i}Af_{i}=f_{i}Af_{j}Af_{i}=f_{i}Af_{j}f_{j}Af_{i}=A_{ij}A_{ji} \ \ \forall{i, j}.
\end{equation}
Como cada idempotente $f_{i}$ pertenece a su respectivo $A_{ii}$, entonces tenemos que siempre existen elementos $\mathcal{E}_{ij}\in{A_{ij}}$ y $\mathcal{E}_{ji}\in{A_{ji}}$ tales que:
\begin{equation}\label{algsimp2}
f_{i}=\mathcal{E}_{ij}\mathcal{E}_{ji}.
\end{equation}

Se puede ver que estos mismos elementos también cumplen $\mathcal{E}_{ji}\mathcal{E}_{ij}=f_{j}$\footnote{Notemos que $\mathcal{E}_{ji}\mathcal{E}_{ij}$ es un idempotente que pertenece al álgebra de división $A_{jj}$, pero el único idempotente allí es $f_{j}$ (por ser primitivo), luego $\mathcal{E}_{ji}\mathcal{E}_{ij}=f_{j}$.}, con lo cual podemos definir $\mathcal{E}_{ii}:=f_{i}$ y decir que:

\begin{equation}\label{algsimp3}
\mathcal{E}_{ij}\mathcal{E}_{lm}=\delta_{jl}\mathcal{E}_{im} \ \forall{i,j,l,m}\in\{1,...,k\},
\end{equation}
donde $\delta$ es la función delta de Kronecker.

Una propiedad interesante de las álgebras de división $A_{ii}$ es que son todas isomorfas entre sí. 

\begin{prop}
$A_{ii}\simeq{A_{jj}}$ para todos $i,j$ si $A$ es un álgebra simple.
\end{prop}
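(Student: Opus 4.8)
El plan es construir un isomorfismo de álgebras explícito $\varphi:A_{ii}\rightarrow A_{jj}$ a partir de los elementos $\mathcal{E}_{ij},\mathcal{E}_{ji}$ provistos por las ecuaciones \ref{algsimp2}--\ref{algsimp3}. Primero definiría
\[
\varphi:A_{ii}\rightarrow A_{jj},\qquad \varphi(a)=\mathcal{E}_{ji}\,a\,\mathcal{E}_{ij},
\]
y, simétricamente, $\psi:A_{jj}\rightarrow A_{ii}$ dada por $\psi(b)=\mathcal{E}_{ij}\,b\,\mathcal{E}_{ji}$. El primer paso sería verificar que $\varphi$ está bien definida, es decir, que su imagen cae en $A_{jj}=f_jAf_j$: como $\mathcal{E}_{ji}\in A_{ji}=f_jAf_i$, $a\in f_iAf_i$ y $\mathcal{E}_{ij}\in f_iAf_j$, el producto $\mathcal{E}_{ji}a\mathcal{E}_{ij}$ pertenece a $f_jAf_j$. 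La linealidad es inmediata.

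A continuación verificaría la estructura multiplicativa. El hecho computacional clave es la identidad de absorción $f_ia=af_i=a$, válida para todo $a\in A_{ii}$ (pues $a=f_iaf_i$ y $f_i^2=f_i$), junto con $f_i\mathcal{E}_{ij}=\mathcal{E}_{ij}$ y $\mathcal{E}_{ji}f_i=\mathcal{E}_{ji}$. Usando $\mathcal{E}_{ij}\mathcal{E}_{ji}=f_i$ de \ref{algsimp2}, para $a,a'\in A_{ii}$ se obtiene
\[
\varphi(a)\varphi(a')=\mathcal{E}_{ji}\,a\,(\mathcal{E}_{ij}\mathcal{E}_{ji})\,a'\,\mathcal{E}_{ij}=\mathcal{E}_{ji}\,af_ia'\,\mathcal{E}_{ij}=\mathcal{E}_{ji}\,aa'\,\mathcal{E}_{ij}=\varphi(aa'),
\]
de modo que $\varphi$ es multiplicativa, y $\varphi(f_i)=\mathcal{E}_{ji}f_i\mathcal{E}_{ij}=\mathcal{E}_{ji}\mathcal{E}_{ij}=f_j$, que es la unidad de $A_{jj}$; por lo tanto $\varphi$ es un morfismo de álgebras unitales.

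Por último, $\varphi$ y $\psi$ resultan mutuamente inversas: $\psi(\varphi(a))=\mathcal{E}_{ij}\mathcal{E}_{ji}\,a\,\mathcal{E}_{ij}\mathcal{E}_{ji}=f_iaf_i=a$ para todo $a\in A_{ii}$, y análogamente $\varphi(\psi(b))=b$ para todo $b\in A_{jj}$, usando nuevamente \ref{algsimp2} y su análoga $\mathcal{E}_{ji}\mathcal{E}_{ij}=f_j$. Concluyo así que $\varphi$ es un isomorfismo de álgebras y $A_{ii}\simeq A_{jj}$. No anticipo un obstáculo genuino más allá del trabajo de contabilidad con los idempotentes: todo descansa en la existencia del par $\mathcal{E}_{ij},\mathcal{E}_{ji}$, que es precisamente donde interviene la simplicidad de $A$ (vía $Af_iA=A$ y por ende $A_{ij}A_{ji}=A_{ii}$ en \ref{algsimp1}); una vez disponibles esos elementos, el argumento es el cálculo familiar de ``conjugar por unidades matriciales fuera de la diagonal''.
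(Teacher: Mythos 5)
Tu propuesta es correcta y coincide esencialmente con la demostración del texto: se define $\phi_{ij}(x)=\mathcal{E}_{ji}x\mathcal{E}_{ij}$ con inversa $x\mapsto\mathcal{E}_{ij}x\mathcal{E}_{ji}$ y se verifica que preserva la unidad. Tu versión sólo añade en forma explícita la verificación de la multiplicatividad, que el texto da por sobreentendida.
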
 

\begin{proof}
Basta con definir el isomorfismo entre álgebras:
\begin{equation}
\begin{split}
\phi_{ij}:A_{ii}&\longrightarrow{A_{jj}}\\
x&\longmapsto{\mathcal{E}_{ji}x\mathcal{E}_{ij}},
\end{split}
\end{equation}
que tiene morfismo inverso:
\begin{equation}
\begin{split}
\phi_{ij}^{-1}:A_{jj}&\longrightarrow{A_{ii}}\\
x&\longmapsto{\mathcal{E}_{ij}x\mathcal{E}_{ji}},
\end{split}
\end{equation}
y notemos que se preserva la unidad: $\phi_{ij}(f_{i})=\mathcal{E}_{ji}f_{i}\mathcal{E}_{ij}=\mathcal{E}_{ji}\mathcal{E}_{ij}=f_{j}$.
\end{proof}

Todas las álgebras $A_{ii}$ son entonces isomorfas a una única álgebra de división (con $1\cong{f_{1}}\cong...\cong{f_{k}}$) que denotaremos por $\mathbb{K}$. Si $A$ es un álgebra de Clifford puede probarse que esta álgebra es $\mathbb{R}$, $\mathbb{C}$ o $\mathbb{H}$.

\begin{obs}\label{Kmoduloder} Si tomamos el álgebra de división $\mathbb{K}$, un conjunto $A_{ii}$ y los isomorfismos $\mathbb{K}\xrightarrow{\varphi}A_{ii}\xrightarrow{\phi_{im}}A_{mm}$, podemos dar a cada ideal $Af_{m}$ una estructura lineal derecha sobre el álgebra de división $\mathbb{K}$.

Utilizando el isomorfismo $\varphi_{m}=\phi_{im}\varphi$ (con $\varphi_{i}=\varphi$) podemos definir en $Af_{m}$ la operación externa derecha por:
\begin{equation}
\begin{split}
Af_{m}\times\mathbb{K}&\rightarrow{Af_{m}}\\
(\psi,\lambda)&\mapsto\psi\bullet\lambda=\psi\varphi_{m}(\lambda)
\end{split}
\end{equation}

Este producto está bien definido, pues como $A$ es simple $Af_{m}A=A$, lo que implica $Af_{m}(f_{m}Af_{m})=Af_{m}Af_{m}=Af_{m}$. Se puede ver que con este producto $Af_{m}$ es un $\mathbb{K}$-módulo derecho.

Se podrían definir los isomorfismos $\varphi_{m}:\mathbb{K}\rightarrow{A_{mm}}$ sin elegir un $A_{ii}$ en particular, sino simplemente pidiendo sobre ellos la relación de compatibilidad $\varphi_{m}=\phi_{lm}\varphi_{l}$ para todo par $l,m\in\{1,...,k\}$.

En general se asumirá que se trabaja con $\mathbb{K}=f_{m}Af_{m}$ y se obviará $\varphi_{m}$ en el producto, quedando el mismo definido por $(\psi,\lambda)\mapsto\psi\lambda$
\end{obs}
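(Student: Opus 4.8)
The plan is to verify directly the axioms of a right $\mathbb{K}$-module for the pair $(Af_m, \bullet)$. The underlying additive group is already in place: $Af_m$ is a subspace of $A$, hence an abelian group under $+$. Well-definedness of $\bullet$ — that $\psi\varphi_m(\lambda)\in Af_m$ — has been recorded above from the simplicity of $A$, since $\varphi_m(\lambda)\in A_{mm}=f_mAf_m$ and $Af_m\,(f_mAf_m)=Af_m$. So the only remaining content is the four compatibility identities, each of which reduces to unwinding the definition $\psi\bullet\lambda=\psi\varphi_m(\lambda)$ and using that $\varphi_m:\mathbb{K}\to A_{mm}$ is an isomorphism of algebras together with the associative and distributive laws in $A$.

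Concretely I would argue as follows. The two distributive laws follow from additivity of $\varphi_m$ (it is a linear map) and from the distributivity of the product in $A$: $\psi\bullet(\lambda+\mu)=\psi(\varphi_m(\lambda)+\varphi_m(\mu))=\psi\bullet\lambda+\psi\bullet\mu$ and $(\psi_1+\psi_2)\bullet\lambda=\psi_1\bullet\lambda+\psi_2\bullet\lambda$. The mixed-associativity axiom uses that $\varphi_m$ is multiplicative: $\psi\bullet(\lambda\mu)=\psi\bigl(\varphi_m(\lambda)\varphi_m(\mu)\bigr)=\bigl(\psi\varphi_m(\lambda)\bigr)\varphi_m(\mu)=(\psi\bullet\lambda)\bullet\mu$, the middle equality being associativity in $A$. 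For the unit axiom one uses that $\varphi_m$ preserves identities, so $\varphi_m(1_{\mathbb{K}})=1_{A_{mm}}=f_m$; writing $\psi=af_m$ we get $\psi\bullet 1_{\mathbb{K}}=\psi f_m=af_mf_m=af_m=\psi$ since $f_m$ is idempotent. This last step — noticing that $f_m$ acts as a right identity on $Af_m$ — is the only place where a small observation beyond pure formalism is needed, but it is still routine.

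I would close by remarking that the left multiplication action of $A$ on $Af_m$ and the right $\bullet$-action of $\mathbb{K}$ commute, $(a\psi)\bullet\lambda=a(\psi\bullet\lambda)$, again by associativity in $A$; hence $Af_m$ is in fact an $(A,\mathbb{K})$-bimodule, which is the structure that will be used when comparing the regular representations $(L_i,Af_i)$ for different $i$. I do not expect any genuine obstacle here: the entire proof is a matter of transporting the algebra structure of $A_{mm}$ to $\mathbb{K}$ via $\varphi_m$ and citing associativity, so the "hard part" is merely bookkeeping with the idempotents $f_m$.
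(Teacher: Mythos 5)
Tu propuesta es correcta y sigue esencialmente el mismo camino que el texto: la buena definición se obtiene de que $\varphi_m(\lambda)\in f_mAf_m$ (la simplicidad de $A$ sólo añade la igualdad $Af_m(f_mAf_m)=Af_m$, pues la inclusión $\subseteq$ ya es automática), y los axiomas de módulo derecho se verifican transportando la estructura de $A_{mm}$ vía $\varphi_m$ y usando asociatividad y distributividad en $A$, exactamente la verificación rutinaria que la observación deja indicada con ``se puede ver''. La observación final de que la acción izquierda de $A$ y la acción derecha de $\mathbb{K}$ conmutan es un complemento útil que el texto explota implícitamente en el teorema \ref{equivalrepsimp}.
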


\begin{obs}Notemos que:
\begin{enumerate}[a)]
\item La proposición previa se puede generalizar a cualquier par de idempotentes primitivos. Es decir, para todo par de idempotentes primitivos $f$, $g$ se tiene que $fAf\cong{gAg}$.
\item Los ideales $Af_{i}$ no son $\mathbb{K}$-módulos izquierdos pues, si bien $(f_{i}Af_{i})Af_{i}=f_{i}Af_{i}\subseteq{Af_{i}}$, se tiene que dado $\psi\in{Af_{i}}$, $f_{i}\psi\neq{\psi}$, con lo cual no se verifica la propiedad $1_{\mathbb{K}}\psi=\psi$.
\end{enumerate}
\end{obs}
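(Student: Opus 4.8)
The statement has two independent parts; part~(a) is the real content and part~(b) is a short check, so the plan is to dispatch them in that order.

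For part~(a), the idea is to reprove the previous proposition with two \emph{arbitrary} primitive idempotents $f,g$ in place of $f_i,f_j$, the only extra ingredient being that $fAf$ and $gAg$ are division rings (with identities $f$ and $g$) --- the same fact already invoked in the text for the corners $A_{ii}$, which holds for any primitive idempotent of a simple algebra. First, since $A$ is simple, $AgA=A$, whence
\[
fAf = f(AgA)f = fAgAf = (fAg)(gAf)\neq\{0\},
\]
so there exist $a\in fAg$ and $b\in gAf$ with $ab\neq 0$. As $ab$ lies in the division ring $fAf$, it is invertible there; replacing $a$ by $(ab)^{-1}a$, which still lies in $fAg$, I may assume $ab=f$. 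Note then $fa=a=ag$ and $gb=b=bf$. The element $ba\in gAg$ is idempotent, since $(ba)^2=b(ab)a=bfa=ba$, and it is nonzero, because $ba=0$ would force $f=f^2=(ab)(ab)=a(ba)b=0$; primitivity of $g$ then gives $ba=g$.

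Now I would define $\phi\colon fAf\to gAg$ by $\phi(x)=bxa$ and $\psi\colon gAg\to fAf$ by $\psi(z)=azb$. A direct check gives $\phi(xy)=bx(ab)ya=bxya=\phi(x)\phi(y)$ (using $fy=y$ on $fAf$), $\phi(f)=bfa=ba=g$, and $\psi\circ\phi=\text{id}$, $\phi\circ\psi=\text{id}$ from $ab=f$, $ba=g$; hence $\phi$ is an algebra isomorphism and $fAf\cong gAg$. Alternatively one could complete $f$ and $g$ to ccipos, apply the previous proposition to each, and invoke the uniqueness up to isomorphism of the division ring attached to a simple algebra; the direct route above avoids both auxiliary facts. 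For part~(b), with $\mathbb{K}=f_iAf_i$ the only action of $\mathbb{K}$ on $Af_i$ coming from the ambient product is left multiplication, and it does send $Af_i$ into itself since $f_iAf_i\cdot Af_i\subseteq f_iAf_i\subseteq Af_i$; but the unit axiom $1_{\mathbb{K}}\cdot\psi=\psi$ fails. Indeed $1_{\mathbb{K}}=f_i$, and $f_i$ acts as the identity on $\psi\in Af_i$ precisely when $\psi\in f_iA\cap Af_i=f_iAf_i$, a proper subspace of $Af_i$ as soon as the ccipo has more than one member: for $j\neq i$ the nonzero element $\mathcal{E}_{ji}\in f_jAf_i\subseteq Af_i$ satisfies $f_i\mathcal{E}_{ji}=f_if_j\mathcal{E}_{ji}=0\neq\mathcal{E}_{ji}$. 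So $Af_i$ carries no such left $\mathbb{K}$-module structure, in contrast with the right $\mathbb{K}$-module structure of Observation~\ref{Kmoduloder}.

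I expect the only real obstacle to be in part~(a): fixing the normalization $ab=f$ exactly --- this is where primitivity of $f$ and the division-ring structure of $fAf$ are used --- and verifying $ba\neq 0$; once these hold, $ba=g$ and the isomorphism are forced by primitivity of $g$, and everything else is routine algebra.
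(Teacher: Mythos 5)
Tu argumento es correcto y sigue esencialmente la misma ruta que el texto emplea para los rincones $A_{ii}$: construyes elementos $a\in fAg$, $b\in gAf$ con $ab=f$ y $ba=g$ (los análogos de $\mathcal{E}_{ij}$, $\mathcal{E}_{ji}$ de las ecuaciones \ref{algsimp2}--\ref{algsimp3}, obtenidos de la simplicidad vía $AgA=A$ y normalizados usando que $fAf$ es un álgebra de división y que $g$ es primitivo) y luego conjugas, $x\mapsto bxa$, exactamente como en $\phi_{ij}$. La parte (b) coincide con la verificación que el texto tiene en mente, con la precisión adicional, valiosa, de que el axioma de unidad falla exactamente fuera de $f_iAf_i$ y sólo cuando el ccipo tiene más de un elemento.
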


\begin{teo}
El $\mathbb{K}$- módulo derecho $Af_{i}$ es un módulo libre y el conjunto $\{\mathcal{E}_{ji}\ : \ j=1...k\}$ es una base.
\end{teo}
\begin{proof}
Dado $\psi=\psi{f_{i}}\in{A}f_{i}$ buscamos una expansión del tipo \[\psi=\sum_{j=1}^{k}\mathcal{E}_{ji}\bullet\lambda^{j}=\sum_{j=1}^{k}\mathcal{E}_{ji}\alpha^{j}\] con $\alpha^{j}=\varphi_{i}(\lambda^{j})\in{A}_{ii}$. Multiplicando a izquierda por $\mathcal{E}_{im}$ con $m\in\{1,...,k\}$ tenemos:
\begin{equation}
\mathcal{E}_{im}\psi=\sum_{j=1}^{k}\mathcal{E}_{im}\mathcal{E}_{ji}\alpha^{j}=\sum_{j=1}^{k}\delta_{mj}\mathcal{E}_{ii}\alpha^{j}=\mathcal{E}_{ii}\alpha^{m}=\alpha^{m}.
\end{equation}
Luego, todo elemento en $\psi\in{Af_{i}}$ puede escribirse como combinación lineal con coeficientes $\lambda^{j}=\varphi^{-1}_{i}(\mathcal{E}_{ij}\psi)=\varphi^{-1}_{i}(\mathcal{E}_{ij}\psi\mathcal{E}_{ii})$.

Resta ver que $\{\mathcal{E}_{ji}\ : \ j=1...k\}$ es linealmente independiente. Sea $\sum_{j=1}^{k}\mathcal{E}_{ji}\alpha^{j}=0$, entonces multiplicando a izquierda por $\mathcal{E}_{im}$ tenemos que:
\begin{equation}
0=\sum_{j=1}^{k}\mathcal{E}_{im}\mathcal{E}_{ji}\alpha^{j}=\sum_{j=1}^{k}\delta_{mj}\mathcal{E}_{ii}\alpha^{j}=\alpha^{m},
\end{equation} 
por lo tanto el conjunto es una base.
\end{proof}

\begin{teo}
Los espacios $A_{ij}=f_{i}Af_{j}$ tienen estructura de $\mathbb{K}$-bimódulo dada por:
\begin{equation}
\begin{split}
\mathbb{K}\times{A}_{ij}&\rightarrow{A_{ij}}\\
(\lambda, a_{ij})&\mapsto\lambda\bullet{a}_{ij}=\varphi_{i}(\lambda)a_{ij}
\end{split}\quad\quad
\begin{split}
{A}_{ij}\times\mathbb{K}&\rightarrow{A_{ij}}\\
(a_{ij},\lambda)&\mapsto{a}_{ij}\bullet{\lambda}={a}_{ij}\varphi_{j}(\lambda)
\end{split}.
\end{equation}
Más aún, si $\mathbb{K}$ es conmutativa (o sea un cuerpo) se cumple que $\lambda\bullet{a_{ij}}={a_{ij}}\bullet\lambda$ para todo $a_{ij}\in{A_{ij}}$ y $\lambda\in{\mathbb{K}}$.

Además cada $A_{ij}$ es un $\mathbb{K}$-bimódulo libre de dimensión uno sobre $\mathbb{K}$ con base $\{\mathcal{E}_{ij}\}$.
\end{teo}

Notemos que si bien usamos el mismo símbolo ($\bullet$) para ambos productos, estas operaciones son distintas.

\begin{proof}
No es muy dificil ver que son módulos derechos e izquierdos sobre $\mathbb{K}$, puesto que $\varphi_{i}(1_{\mathbb{K}})a_{ij}=f_{i}a_{ij}=a_{ij}$ y $a_{ij}\varphi_{j}(1_{\mathbb{K}})=a_{ij}f_{j}=a_{ij}$ para cualquier $a_{ij}\in{A}_{ij}$.

La condición de compatibilidad de bimódulo:
\begin{equation}
(\lambda\bullet{a_{ij}})\bullet\mu=\lambda\bullet(a_{ij}\bullet\mu) ,
\end{equation}
se deduce trivialmente de la asociatividad del álgebra de Clifford.

Para probar que $\{\mathcal{E}_{ij}\}$ es un conjunto de generadores a izquierda y derecha de $A_{ij}$ primero veamos que $\lambda\bullet\mathcal{E}_{ij}=\mathcal{E}_{ij}\bullet{\lambda}$ para todos $i,j\in\{1,...,k\}$ y todo $\lambda\in{\mathbb{K}}$:
\begin{equation}
{\mathcal{E}_{ij}}\bullet\lambda=\mathcal{E}_{ij}\varphi_{j}(\lambda)=\mathcal{E}_{ij}\mathcal{E}_{ji}\varphi_{i}(\lambda)\mathcal{E}_{ij}=f_{i}\varphi_{i}(\lambda)\mathcal{E}_{ij}=\varphi_{i}(\lambda)\mathcal{E}_{ij}=\lambda\bullet\mathcal{E}_{ij},
\end{equation}

Sea $a_{ij}\in{A_{ij}}$ veamos que $a_{ij}=\mathcal{E}_{ij}\bullet\lambda^{ij}=\lambda^{ij}\bullet\mathcal{E}_{ij}$ con $\lambda^{ij}\in{\mathbb{K}}$. Esto será equivalente a ver que $a_{ij}=\mathcal{E}_{ij}\alpha^{ij}=\beta^{ij}\mathcal{E}_{ij}$ con $\alpha^{ij}\in{A_{jj}}$, $\beta^{ij}\in{A_{ii}}$ y $\alpha^{ij}=\varphi_{j}(\lambda^{ij})$, $\beta^{ij}=\varphi_{i}(\lambda^{ij})$.

Tomando $\alpha^{ij}=\mathcal{E}_{ji}a_{ij}$ y $\beta^{ij}=a_{ij}\mathcal{E}_{ji}$ se cumple lo buscado. Además podemos ver que $\alpha^{ij}=\phi_{ij}(\beta^{ij})$, con lo cual $\alpha^{ij}$ y $\beta^{ij}$ corresponden al mismo elemento en $\mathbb{K}$ via los isomorfismos $\varphi_{i}$ y $\varphi_{j}$.

Veamos ahora que $\lambda\bullet{a_{ij}}=a_{ij}\bullet\lambda$ si $\mathbb{K}$ es un cuerpo. Para la base de $A_{ij}$ se cumple ${\mathcal{E}_{ij}\bullet}\lambda=\lambda\bullet\mathcal{E}_{ij}$, con lo cual:
\begin{equation}
\begin{split}
\lambda\bullet{a_{ij}}=&\lambda\bullet(\mathcal{E}_{ij}\bullet\lambda^{ij})=(\lambda\bullet\mathcal{E}_{ij})\bullet\lambda^{ij}=(\mathcal{E}_{ij}\bullet\lambda)\bullet\lambda^{ij}=\\
=&\mathcal{E}_{ij}\bullet(\lambda\lambda^{ij})=\mathcal{E}_{ij}\bullet(\lambda^{ij}\lambda)=(\mathcal{E}_{ij}\bullet\lambda^{ij})\lambda=a_{ij}\bullet\lambda.
\end{split}
\end{equation}
\end{proof}

\begin{teo}
El álgebra $A$ es un $\mathbb{K}$-bimódulo libre de dimensión $k^{2}$ que tiene por base al conjunto $\{\mathcal{E}_{ij} : i,j\in\{1,...,k\}\}$ y es isomorfa al álgebra de matrices de $k\times{k}$ sobre $\mathbb{K}$.
\end{teo}
\begin{proof}
Como
\begin{equation}
A=\bigoplus_{i,j=1}^{k}A_{ij},
\end{equation}
todo $a\in{A}$ se puede expresar como una suma $a=\sum_{i,j=1}^{k}a_{ij}$, pero por el teorema previo se tiene que $a=\sum_{i,j=1}^{k}\mathcal{E}_{ij}\bullet\lambda^{ij}$. Luego $\{\mathcal{E}_{ij} : i,j\in\{1,...,k\}\}$ es un conjunto de generadores de $A$. Notemos que cada $\lambda^{ij}$ está dado por $\lambda^{ij}=\varphi_{j}^{-1}(\mathcal{E}_{ji}a\mathcal{E}_{jj}$) y que como $\mathcal{E}_{ij}\lambda=\lambda\mathcal{E}_{ij}$, entonces $a=\sum_{i,j=1}^{k}\mathcal{E}_{ij}\bullet\lambda^{ij}=\sum_{i,j=1}^{k}\lambda^{ij}\bullet\mathcal{E}_{ij}$; por lo tanto la base es como módulo izquierdo y derecho sobre $\mathbb{K}$.

Veamos que es el conjunto de generadores es linealmente independiente:
\begin{equation}
0=\sum_{i,j=1}^{k}\mathcal{E}_{ij}\bullet\lambda^{ij}=\sum_{i,j=1}^{k}\mathcal{E}_{ij}\varphi_{j}(\lambda^{ij}),
\end{equation}
multiplicando a izquierda por $\mathcal{E}_{ln}$ y a derecha por $\mathcal{E}_{ll}$ tenemos:
\begin{equation}
\begin{split}
0=&\sum_{i,j=1}^{k}\mathcal{E}_{ln}\mathcal{E}_{ij}\varphi_{j}(\lambda^{ij})\mathcal{E}_{ll}=\sum_{i,j=1}^{k}\delta_{ni}\mathcal{E}_{lj}\varphi_{j}(\lambda^{ij})\mathcal{E}_{ll}=\\
=&\sum_{j=1}^{k}\mathcal{E}_{lj}\varphi_{j}(\lambda^{nj})\mathcal{E}_{jj}\mathcal{E}_{ll}=\sum_{j=1}^{k}\delta_{jl}\mathcal{E}_{lj}\varphi_{j}(\lambda^{nj})\mathcal{E}_{jl}=\mathcal{E}_{ll}\varphi_{j}(\lambda^{nl})\mathcal{E}_{ll}=\varphi_{l}(\lambda^{nl})
\end{split},
\end{equation}
luego, como $\varphi_{l}$ es un isomorfismo para todo $l$ se tiene que $\lambda^{nl}=0$ para todos $n,l\in\{1,...,k\}$.
\end{proof}

En este punto de las notas procederemos a revisar y extender la definición de representación regular irreducible. Recordando la definición \ref{repreg}, la representación regular queda definida en principio como una representación \emph{real} del álgebra de Clifford, pues es la única estructura subyacente de módulo libre que se encuentra en el ideal $Af_{i}$. Sin embargo, acabamos de probar que estos ideales tienen, en el caso simple, una estructura de $\mathbb{K}$-módulo libre. Si $\mathbb{K}$ es $\mathbb{R}$ o $\mathbb{C}$, entonces tenemos en $Af_{i}$ una estructura de $\mathbb{K}$-espacio vectorial; sino simplemente la estructura de $\mathbb{K}$-módulo libre.

Utilizando esta estructura es que podemos identificar al álgebra de Clifford como un álgebra de matrices sobre $\mathbb{K}$. Podemos considerar el espacio de los endomorfismos sobre $Af_{i}$ como $\mathbb{K}$-módulo derecho, $\text{End}_{\mathbb{K}}(Af_{i})$. Sea $a\in{A}$ veamos que multiplicar a izquierda un elemento $\psi\in{Af_{i}}$ es uno de estos endomorfismos. En efecto:
\begin{equation}
a(\psi\lambda+\xi)=a\psi\lambda+a\xi=a(\psi)\lambda+a(\xi).
\end{equation}
Esto prueba que el álgebra $A$ está contenida en el álgebra de endomorfismos de $Af_{i}$ como $\mathbb{K}$-módulo derecho.

Entonces pasaremos a definir una variante de las representaciones regulares a fines de incluir la estructura de $\mathbb{K}$-módulo derecho.

\begin{defn}
Llamamos \emph{representación regular izquierda enriquecida sobre $\mathbb{K}$} a la representación regular izquierda de A, donde ahora el ideal $Af$ es un módulo libre derecho sobre $\mathbb{K}$ y pensamos a $A$ como un $\mathbb{K}$-bimódulo.
\end{defn}

Teniendo todas estas definiciones y resultados podemos responder a la cuestión sobre la equivalencia de representaciones mediante la siguiente proposición:

\begin{teo}\label{equivalrepsimp}
Dada un álgebra simple $A$, una descomposición $A=Af_1\oplus...\oplus{A}f_k$ asociada a un ccipo, las representaciones izquierdas regulares enriquecidas sobe $\mathbb{K}$ $(L_{i},Af_i)$ y $(L_{j},Af_j)$ son equivalentes para cualquier par $i,j$.
\end{teo}

\begin{proof}
Para demostrar que las representaciones $(L_{i},Af_{i})$ y $(L_{j},Af_{j})$ son equivalentes basta con encontrar un isomorfismo lineal $\phi$ entre los $\mathbb{K}$-módulos libres $Af_{i}$ y $Af_{j}$ y ver que $L_{j}(a)=\phi\circ{L_{i}(a)}\circ\phi^{-1}$ para todo $a\in{A}$.
Dicha transformación lineal existe y está dada por:
\begin{equation}
\phi(af_{i})=af_{i}\mathcal{E}_{ij}=a\mathcal{E}_{ii}\mathcal{E}_{ij}=a\mathcal{E}_{ij}\in{Af_{j}}.
\end{equation}
Claramente es lineal sobre $\mathbb{R}$, veamos que es $\mathbb{K}$-lineal. Sea $\lambda\in{\mathbb{K}}$,
\begin{equation}
\begin{split}
\phi(af_{i}\bullet{\lambda})=&\phi(a\mathcal{E}_{ii}\varphi_{i}(\lambda))=a\mathcal{E}_{ii}\varphi_{i}(\lambda)\mathcal{E}_{ij}=a\mathcal{E}_{ij}\mathcal{E}_{ji}\varphi_{i}(\lambda)\mathcal{E}_{ij}=\\
=&a\mathcal{E}_{ii}\mathcal{E}_{ij}\varphi_{j}(\lambda)=\phi(af_{i})\bullet{\lambda}
\end{split},
\end{equation}
por lo que es $\mathbb{K}$-lineal. 

Podemos ver que es un isomorphismo pues la siguiente función es su inversa:
\begin{equation}
\phi^{-1}(bf_{j})=bf_{j}\mathcal{E}_{ji}=b\mathcal{E}_{ji}\in{Af_{i}}.
\end{equation}
En efecto estas funciones son inversas la una de la otra:
\begin{equation}
\begin{gathered}
(\phi^{-1}\circ\phi)(af_{i})=\phi^{-1}(a\mathcal{E}_{ij})=a\mathcal{E}_{ij}\mathcal{E}_{ji}=af_{i} ,\\
(\phi\circ\phi^{-1})(bf_{j})=\phi^{-1}(b\mathcal{E}_{ji})=b\mathcal{E}_{ji}\mathcal{E}_{ij}=bf_{j}.
\end{gathered}
\end{equation}
Además también se cumple la regla de composición entre $L_{i}$ y $L_{j}$:
\begin{equation}
\phi\circ{L_{i}(a)}\circ\phi^{-1}(bf_{j})=\phi(ab\mathcal{E}_{ji})=ab\mathcal{E}_{ji}\mathcal{E}_{ij}=abf_{j}=L_{j}(a)(bf_{j}).
\end{equation}
Luego, estas representaciones son equivalentes y da lo mismo tomar cualquiera de los sumandos directos como espacio de representación.
\end{proof}

Este teorema nos dice que para construir la única representación posible (a menos de representaciones equivalentes) sólo hace falta encontrar un idempotente primitivo y no todo un conjunto completo.

En el caso de álgebras de Clifford se ve que esta álgebra de división siempre es $\mathbb{R}$, $\mathbb{C}$ o $\mathbb{H}$. Más aún, en el caso en que el álgebra de Clifford es simple, se puede encontrar una representación matricial de la misma, donde las entradas de las matrices están en el anillo de división correspondiente $\mathbb{K}=A_{ii}$. Dicha construcción se logra identificando cada elemento $\mathcal{E}_{ij}$ con la matriz $E_{ij}=\delta_{ij}$ y se da una forma explícita en el capítulo 4 de \cite{VazdaRocha}. Siempre existe esta representación matricial, lo que implica que los ideales minimales del álgebra son isomorfos a $\mathbb{K}^{k}$, con $k$ la dimensión de las matrices cuadradas, como se mencionó en el ejemplo \ref{ejemplomatriz}.

\subsubsection{Caso de suma directa de álgebras simples}\label{sumadirectasimples}

Las álgebras de Clifford, o bien son álgebras simples o bien son suma directa de dos álgebras simples. Decimos que una $\mathbb{K}$-álgebra $A$ es suma directa de dos álgebras unitarias $B$ y $C$ y notamos $A=B\oplus{C}$ si se cumple que $A=B\plus{C}$ con $BC=CB=\{0\}$ y por lo tanto $1_{A}=1_{B}+1_{C}$. A continuación probaremos un resultado importante para este tipo de álgebras.

\begin{prop}
Sea $A$ un álgebra unitaria que es suma directa de dos álgebras unitarias $A=B\oplus{C}$, si existe un ideal minimal $\mathcal{I}$, entonces está enteramente contenido en $B$ o enteramente contenido en $C$.
\end{prop}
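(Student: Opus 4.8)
The plan is to exploit the two unit elements $1_B$ and $1_C$, which under the hypotheses $BC = CB = \{0\}$ and $1_A = 1_B + 1_C$ behave as a pair of orthogonal \emph{central} idempotents of $A$. First I would record these properties: writing an arbitrary $a \in A$ as $a = b + c$ with $b \in B$ and $c \in C$, orthogonality gives $1_B c \in BC = \{0\}$ and $c\,1_B \in CB = \{0\}$, so $1_B a = 1_B b = b = b\,1_B = a\,1_B$, and symmetrically $1_C a = c = a\,1_C$. Hence $1_B^2 = 1_B$, $1_C^2 = 1_C$, $1_B 1_C = 1_C 1_B = 0$, both commute with every element of $A$, and moreover $1_B A = A\,1_B = B$ and $1_C A = A\,1_C = C$ (since $1_B a$ is precisely the $B$-component of $a$).

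Now let $\mathcal{I}$ be a minimal ideal; I will argue for a left ideal, the right and two-sided cases being identical after replacing left multiplication by right multiplication where needed. The key step is to observe that $1_B \mathcal{I}$ is again a left ideal of $A$ contained in $\mathcal{I}$: it lies in $\mathcal{I}$ because $1_B \in A$ and $\mathcal{I}$ is a left ideal, and it is itself a left ideal because, using that $1_B$ is central, $a(1_B x) = 1_B(ax) \in 1_B \mathcal{I}$ for every $a \in A$ and $x \in \mathcal{I}$. By minimality of $\mathcal{I}$ there are only two possibilities: $1_B \mathcal{I} = \mathcal{I}$ or $1_B \mathcal{I} = \{0\}$.

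In the first case every $x \in \mathcal{I}$ lies in $1_B \mathcal{I} \subseteq 1_B A = B$, so $\mathcal{I} \subseteq B$. In the second case $1_B x = 0$ for all $x \in \mathcal{I}$, whence $x = 1_A x = 1_B x + 1_C x = 1_C x \in 1_C A = C$, so $\mathcal{I} \subseteq C$. That is exactly the claimed dichotomy.

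I do not expect a real obstacle: the proof is short once one realizes it must be driven by $1_B$ and $1_C$ rather than by the primitive idempotents used elsewhere in the section. The only point needing a moment's care is the verification that $1_B$ and $1_C$ are \emph{central} — this is what guarantees that $1_B \mathcal{I}$ remains an ideal and not merely a subspace — and it is precisely the orthogonality $BC = CB = \{0\}$ that supplies it. An equivalent packaging would be to view $x \mapsto 1_B x$ as an idempotent ($A$-linear, by centrality) endomorphism of $\mathcal{I}$ and note that its image and kernel are subideals, so minimality forces one of them to be all of $\mathcal{I}$.
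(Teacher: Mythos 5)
Your proof is correct, and it takes a slightly different route from the paper's. The paper argues by contradiction: assuming $\mathcal{I}\not\subset B$ and $\mathcal{I}\not\subset C$, it picks a nonzero $a\in B\cap\mathcal{I}$ and shows that $Ba$ is a nonzero subideal of $\mathcal{I}$ properly contained in it (it is an ideal of $A$ because $AB\subseteq B$), contradicting minimality. You instead run a direct dichotomy on the subideal $1_B\mathcal{I}\subseteq\mathcal{I}$, after verifying that $1_B$ and $1_C$ are orthogonal central idempotents with $1_BA=B$ and $1_CA=C$. The two arguments rest on essentially the same structural facts ($BC=CB=\{0\}$ and $1_A=1_B+1_C$), but yours has two small advantages: it is constructive rather than by contradiction, and it makes explicit the projection $x\mapsto 1_Bx$ that the paper leaves implicit when it asserts $B\cap\mathcal{I}\neq\{0\}$ (that assertion does need justification: an element of $\mathcal{I}$ outside $C$ has nonzero $B$-component $1_Bx$, which lies in $\mathcal{I}$ because $\mathcal{I}$ is a left ideal). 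The paper's version, on the other hand, avoids any mention of centrality and works directly with the subalgebra $B$. Both are complete and both generalize verbatim to right and two-sided minimal ideals, as you note.
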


\begin{proof}
Supongamos que $\mathcal{I}$ es ideal minimal izquierdo de $A$, pero que $\mathcal{I}\nsubset{B}$ e $\mathcal{I}\nsubset{C}$. Como $B\cap\mathcal{I}\neq\{0\}$, existe un ${a}\in{B}\cap\mathcal{I}$, con lo cual $Ba$ es un ideal de $A$ que además está contenido en $\mathcal{I}$, pues $Ba\subset{Aa}\subset{A}\mathcal{I}\subset\mathcal{I}$, por otro lado $a\in{Ba}$, con lo cual $Ba\neq\{0\}$ y además $Ba\neq{\mathcal{I}}$, pues $Ba\subset{B}$ y por hipótesis $\mathcal{I}\nsubset{B}$.

Así, llegamos a un absurdo y por lo tanto $\mathcal{I}$ no es minimal. Luego, todo ideal minimal de $A$ está enteramente contenido en $B$ o en $C$, como queríamos demostrar.
\end{proof}

Una consecuencia de este hecho es que una representación regular irreducible de toda el álgebra $A$ no será fiel, pues por ejemplo, si el espacio de representación es un ideal izquierdo contenido en $B$, $\mathcal{I}=Bf$, entonces $C\mathcal{I}=\{0\}$, con lo cual $C$ está contenido en el núcleo de la representación regular izquierda.

Dada un álgebra $A$ de este estilo, podemos obtener una representación fiel a partir de la suma directa de representaciones irreducibles. La idea es tomar un ideal minimal izquierdo $\mathcal{I}$ contenido en $B$ y otro $\mathcal{J}$ contenido en $C$. A partir de ellos se toma el conjunto $\mathcal{I}\oplus\mathcal{J}$ que es un ideal izquierdo (no minimal) y se utiliza la representación regular izquierda en este ideal. Dicha representación, si bien no es irreducible sí resulta ser fiel.

\begin{defn}
Dada $A=B\oplus{C}$ con $B$ y $C$ álgebras simples, entonces una representación regular izquierda dada por la definición \ref{repreg} se dice que es una \emph{representación regular izquierda fiel} si tiene por espacio de representación a $\mathcal{I}\oplus\mathcal{J}$ con $I$ ideal minimal izquierdo de $A$ contenido en $B$ y $\mathcal{J}$ ideal minimal izquierdo de $A$ contenido en $C$.
\end{defn}

Recordando los cálculos realizados para álgebras simples, supongamos que tenemos en $B$ y $C$ conjuntos completos de idempotentes primitivos ortogonales $\{f_1,...,f_{k}\}$ y $\{g_1,...,g_{l}\}$ respectivamente, entonces el conjunto $\{f_1,...,f_{k},g_{1},...,g_{l}\}$ será un ccipo de $A$ y tendremos la descomposición $A=Af_{1}\oplus...Af_{k}\oplus{A}g_{1}\oplus...\oplus{A}g_{l}=Bf_{1}\oplus...Bf_{k}\oplus{C}g_{1}\oplus...\oplus{C}g_{l}$. Un ideal del estilo mencionado en el párrafo anterior será $Bf_{i}\oplus{C}g_{j}$ para algún $i\in\{1,...,k\}$ y algún $j\in\{1,...,l\}$.

De manera similar a lo desarrollado en lo concerciente a álgebras simples uno podría preguntarse si da lo mismo tomar cualquier par de ideales $Bf_{i}$ y $Cg_{j}$. Nuevamente, la respuesta es que sí.

\begin{teo}
Sea $A=B\oplus{C}$, $B$ y $C$ álgebras simples y una descomposición: \[{A=Af_{1}\oplus...Af_{k}\oplus{A}g_{1}\oplus...\oplus{A}g_{l}}\] asociada a un ccipo de $A$, $\{f_1,...,f_{k},g_{1},...,g_{l}\}$ con $\{f_1,...,f_{k}\}$ ccipo de $B$ y $\{g_{1},...,g_{l}\}$ ccipo de $C$, entonces las representaciones regulares izquierdas (no irreducibles) $(L_{ij},Af_{i}\oplus{A}g_{j})$ y $(L_{rs},Af_{r}\oplus{A}g_{s})$ son equivalentes para todos $i,r\in\{1,...,k\}$, $j,s\in\{1,...,l\}$.
\end{teo}

\begin{proof}
Para demostrar que las representaciones $(L_{ij},Af_{i}\oplus{A}g_{j})$ y $(L_{rs},Af_{r}\oplus{A}g_{s})$ son equivalentes basta con encontrar un isomorfismo lineal $\phi$ entre los espacios vectoriales reales $Af_{i}\oplus{A}g_{j}$ y $Af_{r}\oplus{A}g_{s}$ y ver que $L_{rs}(a)=\phi\circ{L_{ij}(a)}\circ\phi^{-1}$ para todo $a\in{A}$.

Para construir este isomorfismo $\phi$, utilizaremos el teorema \ref{equivalrepsimp} sobre las álgebras simples $B$ y $C$.
Como $B$ y $C$ son álgebras simples, entonces tenemos los conjuntos $B_{pq}=f_{p}Bf_{q}$ para todos los $p,q\in\{1,...,k\}$ y los conjuntos $C_{u,v}=f_{u}Bf_{v}$ para todos los $u,v\in\{1,...,l\}$. Como tanto $B$ como $C$ son álgebras simples, utilizando lo dado por las ecuaciones \ref{algsimp1},\ref{algsimp2},\ref{algsimp3}, si definimos $\mathcal{E}_{pp}=f_{p}$ y $\mathcal{H}_{uu}=g_{u}$, se tiene que existen $\mathcal{E}_{pq}\in{B}_{pq}$ y $\mathcal{H}_{uv}\in{C}_{uv}$, tales que:
\begin{equation}
\begin{gathered}
\mathcal{E}_{pq}\mathcal{E}_{ot}=\delta_{qo}\mathcal{E}_{pt}, \ \ \forall{p,q,o,t}\in\{1,...,k\}\\
\mathcal{H}_{uv}\mathcal{H}_{xy}=\delta_{vx}\mathcal{H}_{uy}, \ \ \forall{u,v,x,y}\in\{1,...,l\}
\end{gathered}
\end{equation}

Definimos entonces $\phi:Bf_{i}\oplus{C}g_{j}\rightarrow{B}f_{r}\oplus{C}g_{s}$, por:
\begin{equation}
\phi(bf_{i}+cg_{j})=bf_{i}\mathcal{E}_{ir}+cg_{j}\mathcal{H}_{js}=b\mathcal{E}_{ir}+c\mathcal{H}_{js}\in{Bf_{r}\oplus{C}g_{s}}.
\end{equation}

Se puede ver fácilmente que es lineal. Además $\phi$ es un isomorfismo, pues su inversa está dada por:
\begin{equation}
\phi^{-1}(bf_{r}+cg_{s})=bf_{r}\mathcal{E}_{ri}+cg_{s}\mathcal{H}_{sj}=b\mathcal{E}_{ri}+c\mathcal{H}_{sj}\in{Bf_{i}\oplus{C}g_{j}}.
\end{equation}

Resta ver que $L_{rs}(a)=\phi\circ{L_{ij}(a)}\circ\phi^{-1}$. Sean $a\in{A}$, $b\in{B}$, $c\in{C}$, entonces:

\begin{equation}
\begin{gathered}
\phi(L_{ij}(a)\phi^{-1}(bf_{r}+cg_{s}))=\phi(L_{ij}(a)(b\mathcal{E}_{ri}+c\mathcal{H}_{sj}))=\\
=\phi(ab\mathcal{E}_{ri}+ac\mathcal{H}_{sj})=ab\mathcal{E}_{ri}\mathcal{E}_{ir}+ac\mathcal{H}_{sj}\mathcal{H}_{js}=\\
=abf_{r}+acg_{s}=a(bf_{r}+cg_{s})=L_{rs}(a)(bf_{r}+cg_{s}),
\end{gathered}
\end{equation}
con lo cual se cumple la igualdad y por lo tanto queda demostrado el teorema.
\end{proof}

En el caso puntual de algebras de Clifford, cuando ésta no es simple, las dos álgebras que forman parte de la suma directa son isomorfas entre sí; por ejemplo: $\text{Cl}_{0,3}\simeq{\mathbb{H}\oplus\mathbb{H}}$ y $\text{Cl}_{1,0}\simeq{\mathbb{R}\oplus\mathbb{R}}$. Más aún, los sumandos simples siempre son álgebras matriciales completas sobre $\mathbb{R}$, $\mathbb{C}$ o $\mathbb{H}$. Notemos entonces que si $A=M_{k}(\mathbb{K})\oplus{M}_{k}(\mathbb{K})$, entonces un elemento en la representación regular izquierda irreducible tiene dimensión $k$, pues está en $\mathbb{K}^{k}$ (ver ejemplo \ref{ejemplomatriz}), mientras que un elemento en la representación regular izquierda fiel tiene dimensión $2k$ pues pertenece a $\mathbb{K}^{k}\oplus\mathbb{K}^{k}$.

\begin{obs}
Notemos que el idempotente que genera al ideal $\mathcal{I}\oplus\mathcal{J}$ es, por ejemplo, $h=f_{1}+g_{1}$, que no es un idempotente primitivo. En este caso el conjunto $hAh$ está dada por la suma directa de álgebras de división, $f_{1}Bf_{1}\oplus{g}_{1}Cg_{1}=B_{11}\oplus{C}_{11}$. En este caso, el ideal $Ah=Bf_{1}\oplus{C}g_{1}$ puede ser dotado de una estructura lineal derecha sobre el \emph{anillo doble} $B_{11}\oplus{C}_{11}$, de la forma:
\begin{equation}
\begin{split}
(Bf_{1}\oplus{Cg_{1}})\times(B_{11}\oplus{C}_{11})&\rightarrow{A}h\\
(\varphi+\chi,\lambda+{\mu})&\mapsto(\varphi\lambda+\chi\mu)
\end{split}
\end{equation}
\end{obs}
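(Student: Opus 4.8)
The plan is to verify each of the four assertions of the observation by direct computation, the workhorse throughout being the orthogonality relations $BC=CB=\{0\}$ that hold because $A=B\oplus C$. First I would confirm that $h=f_1+g_1$ is an idempotent: expanding $h^2=f_1^2+f_1g_1+g_1f_1+g_1^2$ and using $f_1g_1\in BC=\{0\}$, $g_1f_1\in CB=\{0\}$, $f_1^2=f_1$, $g_1^2=g_1$, one gets $h^2=f_1+g_1=h$.

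To show that $h$ generates $\mathcal{I}\oplus\mathcal{J}=Bf_1\oplus Cg_1$, I would write an arbitrary $a\in A$ as $a=b+c$ with $b\in B$, $c\in C$, and compute $ah=(b+c)(f_1+g_1)=bf_1+cg_1$, the cross terms $bg_1\in BC$ and $cf_1\in CB$ again vanishing. This gives $Ah\subseteq Bf_1\oplus Cg_1$, and the reverse inclusion is immediate since any $bf_1+cg_1$ equals $(b+c)h$; hence $Ah=Bf_1\oplus Cg_1=\mathcal{I}\oplus\mathcal{J}$. The non-primitivity of $h$ then follows at once from its very expression $h=f_1+g_1$, which writes $h$ as a sum of the two nonzero idempotents $f_1$ and $g_1$, orthogonal because $f_1g_1=g_1f_1=0$; by the definition of primitive idempotent this rules out primitivity.

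For the decomposition of $hAh$ I would again set $a=b+c$ and compute in two steps: $ha=(f_1+g_1)(b+c)=f_1b+g_1c$ and then $hah=(f_1b+g_1c)(f_1+g_1)=f_1bf_1+g_1cg_1$, all four discarded cross terms lying in $BC$ or $CB$. This yields $hAh=f_1Bf_1\oplus g_1Cg_1=B_{11}\oplus C_{11}$, the direct sum of the division algebras attached to the primitive idempotents $f_1\in B$ and $g_1\in C$.

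Finally, to establish the right module structure of $Ah=Bf_1\oplus Cg_1$ over the double ring $B_{11}\oplus C_{11}$, I would first check that the proposed action is well defined: for $\varphi\in Bf_1$, $\chi\in Cg_1$, $\lambda\in B_{11}$, $\mu\in C_{11}$, the products $\varphi\lambda\in Bf_1$ and $\chi\mu\in Cg_1$ remain in their respective summands, while the off-diagonal products $\varphi\mu$ and $\chi\lambda$ vanish, belonging to $BC$ and $CB$ respectively. The distributivity and associativity axioms then reduce to the associativity of the product of $A$ restricted to the diagonal blocks, just as in the simple case treated earlier, and the unit $f_1+g_1=h$ of $B_{11}\oplus C_{11}$ acts as the identity since $(\varphi+\chi)h=\varphi+\chi$. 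I do not anticipate a genuine obstacle: each step is a one-line calculation. The only point deserving care is that $B_{11}\oplus C_{11}$ is not a division algebra but a ring with zero divisors, so $Ah$ is a module over a non-simple ring rather than an honest vector space; the module axioms nonetheless hold verbatim, and the two summands never interact precisely because of the orthogonality of $B$ and $C$.
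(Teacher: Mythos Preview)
Your verification is correct and is exactly the straightforward computation one would expect; the paper itself states this observation without proof, so there is no argument to compare against. Your use of the orthogonality $BC=CB=\{0\}$ to kill the cross terms at each step is precisely what makes all four assertions go through.
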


\subsubsection{Álgebras de Clifford}

Podría decirse que el caso de las álgebras de Clifford, no es de los casos más complejos, puesto que las mismas son o bien álgebras simples o suma de dos álgebras simples solamente. Este hecho se puede ver a través de la \emph{clasificación de álgebras de Clifford}, que no se dará en este trabajo pero que puede consultarse en \cite{Gallier,Lounesto,VazdaRocha} y que es de central importancia en esta teoría. La clasificación demuestra que existe una periodicidad de orden $8$ en las álgebras; es decir, las álgebras de Clifford esencialmente distintas unas de otras son las que se obtienen para espacios vectoriales de dimensión a lo sumo $8$.

La construcción de las álgebras de Clifford como álgebras matriciales o suma directa de álgebras de ellas (pues son simples o semismiples), utiliza extensivamente las descomposiciones dadas por la ecuación \ref{descomp}. Sin embargo, no hemos probado hasta el momento que tal descomposición exista. El teorema que se da a continuación resulta de especial importancia en teoría de Clifford y trata sobre la existencia de un \emph{conjunto completo de idempotentes primitivos ortogonales} para álgebras de Clifford de dimensión y signatura arbitrarias. Nuevamente, se omitirá la demostración, pero la misma puede encontrarse en la página 111 de \cite{VazdaRocha} 

\begin{teo}\label{Cliffordccipo}
Sea $\text{Cl}_{p,q}$ el álgebra de Clifford asociada al espacio cuadrático real de signatura $(p,q)$ que notaremos por $\mathbb{R}^{p,q}$; y sea $\{e_1,...,e_n\}$ una base ortonormal de $\mathbb{R}^{p,q}$ con $n=p+q$, siempre existen $k$ elementos no escalares $e_{I_{j}}$ que forman parte de la base estándar del álgebra de Clifford asociada a $\{e_1,...,e_n\}$ que cumplen $e_{I_{j}}^2=1$ para todo $j$ y $e_{I_{i}}e_{I_{j}}=e_{I_{j}}e_{I_{i}}$ para todos $i,j$. Dado esto, los elementos de la forma:
\begin{equation}\label{idemprimit}
\frac{1}{2}(1{\pm}e_{I_{1}})...\frac{1}{2}(1{\pm}e_{I_{k}}),
\end{equation}
son idempotentes primitivos ortogonales. Tenemos entonces, combinando independientemente los signos $\pm$ en la ecuación un total de $2^{k}$ idempotentes. Además sumando todos estos idempotentes, se obtiene la identidad, con lo cual el conjunto de los $2^{k}$ idempotentes de la forma \ref{idemprimit} conforman un \emph{ccipo}. El número $k$ depende de el valor de $q-p$ y está dado por $k=q-r_{q-p}$ donde $r_{j}$ es el $j$-ésimo número de \emph{Radon-Hurwitz} de acuerdo a:

\begin{equation}
\begin{tabular}{l*{7}{c}r}
j & 0 & 1 & 2 & 3 & 4 & 5 & 6 & 7 \\
\hline
$r_{j}$ & 0 & 1 & 2 & 2 & 3 & 3 & 3 & 3 
\end{tabular}\ ,
\end{equation}
con la regla $r_{j+8}=r_{j}+4$.

\end{teo}

\subsection{Espinores algebraicos y aplicaciones a la física}\label{espinores}

Para introducir los espinores algebraicos utilizaremos un ejemplo simple como motivación y luego generalizaremos a todas las álgebras de Clifford.

Consideremos el grupo $\text{Spin}(2,0)$ en el álgebra de Clifford $\text{Cl}_{2,0}\neq\text{Cl}_{2}$. Tenemos del ejemplo \ref{Cl20} que $\text{Spin}(2,0)\cong\text{U}(1)=\{\alpha{1}+\beta{k} \ | \ \alpha^2+\beta^2=1\}\subset{\mathbb{H}}$, entonces podemos escribir a $\alpha$ y $\beta$ como $\alpha=\cos{\theta}$, $\beta=\sin{\theta}$. Con lo cual $x=\cos{\theta}1+\sin{\theta}k$, $\alpha(x)=\cos{\theta}\alpha(1)+\sin{\theta}\alpha(k)=\cos{\theta}1+\sin{\theta}k$, donde usamos que $k=e_1e_2$, con lo cual $\alpha(k)=k$ con lo cual tenemos que $x^{-1}=\overline{x}=\cos{\theta}1-\sin{\theta}k$. 

Si consideramos un $v\in\mathbb{V}$, entonces $v=ai+bj$ para algún par $a,b$ de números reales y la acción adjunta de x sobre este elemento toma la forma:
\begin{equation}
\begin{gathered}
\rho_{x}(v)=(\cos{\theta}1+\sin{\theta}k)(ai+bj)(\cos{\theta}1-\sin{\theta}k)=\\
=(\cos{\theta}1+\sin{\theta}k)[(a\cos{\theta}-b\sin{\theta})i+(a\sin{\theta}+b\cos{\theta})j]=\\
=(a(\cos^{2}\theta-\sin^{2}\theta)-2b\cos{\theta}\sin{\theta})i+(a(\cos^{2}\theta-\sin^{2}\theta)+2a\cos{\theta}\sin{\theta})j.
\end{gathered}
\end{equation}

Utilizando las identidades del seno y coseno de la suma de ángulos, tenemos que:
\begin{equation}
\begin{gathered}
\cos^{2}\theta-\sin^{2}\theta=\cos{2\theta},\\
2\cos{\theta}\sin{\theta}=\sin{2\theta},
\end{gathered}
\end{equation}
con lo que se obtiene:
\begin{equation}
\rho_{x}(v)=(a\cos{2\theta}-b\sin{2\theta})i+(b\cos{2\theta}+a\sin{2\theta})j.
\end{equation}
Esto es una rotación de ángulo $2\theta$ sobre el vector $ai+bj=ae_1+be_2$, lo que está en concordancia con el hecho de que $\rho_{x}\in\text{SO}(2)$.

Por otro lado recordemos que el álgebra de Clifford $\text{Cl}_2$ es isomorfa al álgebra de matrices reales de $2\times{2}$, $M_{2}(\mathbb{R})$. A su vez $M_{2}{(\mathbb{R})}$ es isomorfa al álgebra de endomorfismos de un espacio vectorial real $S$ de dimensión dos, $\text{End}{(S)}$, con lo cuál podemos preguntarnos cómo actúa el elemento $x=\cos{\theta}1+\sin{\theta}k$ sobre algún elemento de $S$.

Utilizando la ecuación \ref{Cl2_matriz}, vemos que el elemento $x$ cumple:
\begin{equation}
x= \begin{pmatrix}
		\cos{\theta} \ & -\sin{\theta}\\
		 \sin{\theta} \ & \cos{\theta}\\
	       \end{pmatrix}.
\end{equation}
Visto como las componentes de un endomorfismo de $S$ en cierta base $\{s_1,s_2\}$ esto es una \emph{rotación de ángulo $\theta$}. Esta rotación de ángulo $\theta$ en el espacio vectorial real de dimensión dos, da lugar vía la acción adjunta a una rotación de ángulo $2\theta$ en el espacio vectorial $\mathbb{V}$ de partida.

Esto es equivalente a decir que el elemento $x$ actúa en un elemento $\psi\in{S}$ a través de la representación regular de $\text{Cl}_{2,0}$ explicada en la sección \ref{repreg}. También se dice que $\psi$ es un elemento de la representación regular izquierda de $\text{Cl}_{2,0}$ y que $S$ es el espacio de representación de la acción regular izquierda de $\text{Cl}_{2,0}$. En este caso decimos que un elemento en $S$ es un \emph{espinor} y que $S$ es el espacio de espinores. En dimensiones superiores resulta que hay que tener más cuidado y daremos una definición más precisa para los espinores; sin embargo, la idea detrás siempre es la de trabajar con una representación regular irreducible del álgebra de Clifford.

Las álgebras de Clifford son siempre semisimples, puesto que son isomorfas a álgebras de matrices sobre $\mathbb{R}$, $\mathbb{C}$ o $\mathbb{H}$ o sumas directas de este tipo de álgebras. Al definir los espinores se hace una diferenciación para álgebras simples o no simples. 

\begin{defn}
Dada un álgebra de Clifford $\text{Cl}(\Phi)$ simple llamamos \emph{espacio de espinores} o \emph{espacio espinorial} al espacio de representación de la representación regular izquierda irreducible del álgebra con la estructura adiciónal de $\mathbb{K}$-módulo derecho mencionada en la observación \ref{Kmoduloder}. Equivalentemente podemos decir que el espacio de espinores es un ideal minimal izquierdo del álgebra con dicha estructura adicional de $\mathbb{K}$-módulo derecho. Un elemento en este espacio se denomina \emph{espinor}

Si en cambio el álgebra de Clifford es suma directa de álgebras simples, dicho espacio se llama \emph{espacio de semi-espinores} o \emph{espacio semi-espinorial} y un elemento en el espacio se denomina \emph{semi-espinor}.
\end{defn}

Esta diferenciación en las definiciones se basa en que si un álgebra es semisimple, un ideal minimal está completamente contenido en alguna de las componentes simples del álgebra, y debido al análisis de la sección \ref{sumadirectasimples}, la representación irreducible no resulta ser fiel. En este sentido, se necesitan de dos semi-espinores en el caso semisimple para obtener una representación fiel.

Notemos entonces que un vector en el espacio vectorial original $\mathbb{V}$ tiene $n$ componentes \emph{reales}, mientras que un espinor es un objeto de $k$ componentes (según corresponda por el teorema \ref{Cliffordccipo}), cada una de ellas en el álgebra de división $\mathbb{K}$ que corresponda. Por ejemplo, si consideramos $\text{Cl}_{3,0}$ un vector tiene tres componentes reales, pero como $\text{Cl}_{3,0}\simeq{M_{2}\mathbb{C}}$, entonces un espinor asociado a este espacio cuadrático tiene dos componentes complejas.

\subsubsection{Aplicaciones}

A partir de lo recopilado podemos mencionar algunas aplicaciones a la física de esta teoría. Hemos visto que a partir de un espacio cuadrático de {\bf{vectores}} se construye un espacio de nuevos objetos denominados {\bf{espinores}}. Estos objetos son de naturaleza distinta, pues dada una transformación isométrica ($\in\text{SO}(\Phi)$) en el espacio de vectores, mientras éstos transforman de acuerdo a la representación adjunta del grupo $\text{Spin}$, los espinores lo hacen de acuerdo a la representación regular izquierda del mismo grupo.

Tanto en la teoría de mecánica cuántica con espín (teoría de Pauli), como en la teoría de Dirac del electrón, surgen naturalmente los espinores a través del álgebra de Clifford. Una propiedad importante de estos objetos es que al efectuar una rotación de ángulo $2\pi$ sobre el sistema, el mismo sufre una rotación sólo de $\pi$ en el espacio interno\footnote{Esto se da cuando se definen las rotaciones a partir de la exponencial del álgebra de Lie del grupo SO. Podría evitarse el signo negativo usando la exponencial multiplicada por -1}. Esto es lo que sucede en el ejemplo de la sección \ref{espinores}, donde una rotación de ángulo $2\theta$ en el espacio de vectores de dimensión dos, resulta en una rotación de ángulo $\theta$ para el espinor en cuestión.

Una cuestión a observar es por qué se da esta relación entre la representación adjunta y la representación regular del grupo Spin o de algún grupo en general. Pensemos qué pasa cuando tenemos un espacio vectorial de dimensión finita y el espacio de operadores lineales en él. Consideremos el vector dado por $A{u}$, donde $A$ es un operador lineal y $u$ un vector. Supongamos que aplicamos cierta transformación lineal inversible $S$ a todos los vectores del espacio. En ese caso, se obtienen nuevos vectores y sucede que $\nobreak{A(S{u})\neq{A}{u}}$. Supongamos entonces que a la vez que aplicamos $S$ a todos los vectores, también hacemos una transformación de los operadores $A\mapsto{A'}$, de modo de asegurar que $A'(Su)=A{u}$. Es fácil ver que $A'=SAS^{-1}$. Entonces, mientras los vectores transformaron con la representación regular izquierda del grupo de isomorfismos, los operadores lo hacen a través de la representación adjunta. Lo mismo sucede si se ve a esta transformación desde el enfoque en el cual $S$ es una matriz de cambio de base.

Tanto en teoría de Pauli como en la de Dirac son todos los elementos del álgebra de Clifford los que operan sobre los espinores, no solo los del grupo Spin, con lo cual podría pensarse que hay transformaciones de los espinores que no se corresponden con la acción adjunta de ningún elemento en Spin. Sin embargo estas transformaciones no son invertibles y por lo tanto no representan por ejemplo un cambio de base o ningún cambio de observador.

Analicemos brevemente la llamada ecuación de Dirac. Esta es una ecuación diferencial lineal introducida por Dirac en 1928 \cite{Dirac} cuya solución corresponde al estado cuántico de un electrón libre relativista. La misma está dada por:
\begin{equation}
\Bigg{(}i\gamma_{\mu}\frac{\partial}{\partial{x_{\mu}}}-m\mathbf{1}\Bigg{)}\psi(x^{0},x^{1},x^{2},x^{3})=0.
\end{equation}
Aquí, $m$ es la masa del electrón y cada 4-upla $(x^{0},x^{1},x^{2},x^{3})$ representa un punto del espacio-tiempo con $x^{0}$ una coordenada temporal y $x^{1},\ x^{2}$ y $x^{3}$ tres coordenadas espaciales. Las matrices $\gamma_{\mu}$  con $\mu\in\{0,...,3\}$ y $\mathbf{1}$ (la identidad) son los generadores de la representación regular fiel del álgebra de Clifford del espacio-tiempo $\text{Cl}_{1,3}$ y $\psi(x^{0},...,x^{3})$ es una función que asigna a cada punto del espacio tiempo $(x_{0},...,x_{3})$ un espinor algebraico, es decir, un \emph{campo espinorial}. En este contexto el campo se puede ver como un vector columna de cuatro funciones complejas. Notemos que también aparece una $i$ imaginaria; esto se debe a que el álgebra de Clifford se ha hecho compleja (no se dio en estas notas pero puede consultarse en \cite{Gallier}), hecho que por ahora ignoraremos. Otra cuestión nueva respecto a lo desarrollado en las notas es que el espinor no es un elemento único sino que es un campo espinorial. Supongamos ahora que el campo espinorial $\psi$ es autovalor de todos los operadores derivada $\frac{\partial}{\partial{x_{\mu}}}$, de modo que:
\begin{equation}
\frac{\partial{\psi}}{\partial{x_{\mu}}}=ip^{\mu}\psi \ ,
\end{equation}
donde $p^{\mu}$ es un número real para cada número $\mu$. Entonces la ecuación de Dirac se escribe:
\begin{equation}
(\gamma_{\mu}p^{\mu}-m\mathbf{1})\psi(x^{0},...,x^{3})=0.
\end{equation}
En este caso, para cada punto fijo del espacio-tiempo tenemos una ecuación en un álgebra de Clifford, pues la cantidad entre paréntesis es un elemento del álgebra de Clifford fijo y la solución es un espinor. Una cuestión que vale la pena mencionar es que existe una forma alternativa a la ecuación de Dirac que es la ecuación de Dirac-Hestenes \cite{Hestenes}. Ésta evita la introducción de números complejos y para introducirla es esencial trabajar con idempotentes en el álgebra de Clifford.

\section{Conclusiones}

A lo largo de estas notas hemos introducido el álgebra de Clifford asociada a un espacio cuadrático, analizado su relación con las isometrías asociadas a dicho espacio cuadrático y los grupos Pin y Spin. Por último introducimos los espinores algebraicos y mencionamos aplicaciones a la física.

Quedan cuestiones muy importantes sobre las álgebras de Clifford sin explorar y que pretendo incorporar en un futuro. Algunas de estas son la clasificación de álgebras de Clifford y su 8-periodicidad, aspectos de teoría de Lie sobre los grupos Pin, Spin y el grupo de Clifford-Lipschitz y algunos aspectos topológicos dados por el hecho de que el grupo Spin (Pin) es un cubrimiento doble de SO (O).

\newpage
\appendix{}
\section{Apéndice: Bases ortogonales en espacios cuadráticos regulares}\label{apA}

\textbf{Proposición.} Si b es una métrica no definida, entonces existe al menos un vector de tipo luz distinto de cero.

\begin{proof}
Como $b$ no es definida, existe $v\neq{0}$ tal que $b(v,v)\leq{0}$ y $w\neq{0}$ tal que $b(w,w)\geq{0}$.
Si consideramos $z=\alpha{v}+(1-\alpha)w$ con $\alpha \in [0,1]$, entonces:\\
Si $v$ y $w$ son l.d. $\rightarrow$ $v=\beta{w}$ $\Rightarrow$ $b(v,v)=\beta^{2}b(w,w)$, pero $b(v,v)\leq{0}$ y $b(w,w)\geq{0}$, entonces la única solución es $b(v,v)=b(w,w)=0$ y ambos son vectores nulos.\\
Si $v$ y $w$ son l.i. entonces: $b(z,z)=\alpha^2b(v,v)+2\alpha(1-\alpha)b(v,w)+(1-\alpha)^{2}b(w,w)=f(\alpha)$. f es un polinomio de grado dos en $\alpha$ con lo cual es una función continua en [0,1]. Por otro lado $f(0)=b(w,w)\geq{0}$ y $f(1)=b(v,v)\leq{0}$, con lo cual existe un valor $\alpha_0$ para el cual $f(\alpha_0)=0$. pero $f(\alpha_0)=b(z,z)=0$, con lo cual $z=\alpha_0{v}+(1-\alpha_0)w$ es el vector nulo buscado.
\end{proof}

\textbf{Teorema.} Para toda forma bilineal b en un espacio vectorial V existe una base ortogonal. Además el número de componentes diagonales negativos, positivos y nulos con respecto a cualquier base son los mismos y por lo tanto son invariantes de la forma bilineal b.

\begin{proof}
La demostración se hará por inducción sobre la dimensión del espacio vectorial $V$ sobre el cual la forma actúa. Sea $d=dim(V)$.
\begin{itemize}
\item $d=1$. En este caso puede suceder que $b=0$ y entonces cualquier base es ortonormal. Si $b\neq{0}$ existe $f_1 \in V$ tal que $b_{11}=b(f_1,f_1)\neq{0}$. Para ortonormalizar la base basta tomar $e_1=\frac{f_1}{\sqrt{|b_{11}|}}$, con lo cual $b(e_1,e_1)=sgn(b_{11})=\pm{1}$.
\item Supongamos que toda forma bilineal simétrica en un espacio $(d-1)$-dimensional o menor admite una base ortogonal y tenemos una forma bilineal b en un e.v. $V$ de dimensión $d$.\\
Si $b=0$ entonces cualquier base es ortonormal. Si $b\neq{0}$ existe al menos un vector $v$ tal que $b(v,v)\neq{0}$. Mas aun, hay vectores $v$ y $w$ tales que $b(v,w)\neq{0}$ y si además $b(v,v)=0$ y $b(w,w)=0$ tenemos que:
\begin{equation}
b(v+w,v+w)=2b(v,w)\neq{0},
\end{equation}
con lo cual en general la suma de vectores nulos no es un vetor nulo.\\
Dado un vector $v$ tal que $b(v,v)=\alpha\neq{0}$ definimos $e_d=\frac{1}{\sqrt{|\alpha|}}v$ $\Rightarrow$ $b(e_d,e_d)=sgn(\alpha)=\pm{1}$.
Cosideremos ahora el conjunto $W=v^{\perp}=\{w \in V : b(v,w)=0\}$ de todos los vectores ortogonales a v, llamado ``v-ortogonal'' o ``v-perpendicular''. Veamos que $W$ es un subespacio vectorial. Sean $w_1,w_2 \in W$ y $\alpha in \mathbb{R}$ entonces:
\begin{equation}
\begin{split}
b(v,\alpha{w_1})=\alpha{b(v,w_1)}=&0 \Rightarrow \alpha{w_1} \in W, \\
b(v,{w_1+w_2})=b(v,w_1)+b(v,w_2)=&0+0=0 \Rightarrow {w_1+w_2} \in W. \\
\end{split}
\end{equation}
Luego $W$ es un subespacio de $V$. Más aún, $W\neq{V}$ pues dado que $b(v,v)\neq{0}$, $v\notin{W}$. Entonces $dim{W}=k<d$. Además la resitrcción de $b$ a $W$ es una forma bilineal simétrica en $W$ y por lo tanto, por hipótesis inductiva, $W$ admite una base ortonormal $\{e_1,...,e_k\}$ con $b(e_i,e_j)=a_i\delta_{ij}$ y $a_i \in \{-1,0,1\}$ $\forall i,j \in \{1,..,k\}$
Probaremos que el conjunto $\{e_1,...,e_k,e_d\}$ es una base ortonormal de V. La ortonormalidad para $\{e_1,...,e_k\}$ y $e_d$ por separado ya está probada, resta ver qué sucede con $b(e_d,e_j)$ para un $j \in \{1,...,k\}$ arbitrario. Recordemos que $e_d=\frac{1}{\beta}v$ y por lo tanto $b(e_d,e_j)=b(\frac{1}{\beta}v,e_j)=\frac{1}{\beta}b(v,e_j)=0$ pues $e_j \in W$. Luego el conjunto es ortonormal. Resta ver que $\{e_1,...,e_k,e_d\}$ genera a todo V (y en particular que $k=d-1$).\\
Sea $x \in V$ definimos $\alpha=b(e_d,e_d)b(x,e_d)=\pm{b(x,e_d)}$, entonces:
\begin{equation}
\begin{split}
b(x-\alpha{e_d},v)=&b(x,v)-\alpha{b}(e_d,v)=\\
=\beta{b}(x,e_d)-\alpha\beta{b}(e_d,e_d)&=\beta{b}(x,e_d)(1-b(e_d,d_d)^2)=0,
\end{split}
\end{equation}
con lo cual $x-\alpha{e_d} \in W$ y podemos escribir:
\begin{equation}
x-\alpha{e_d}=\sum_{i=1}^{k}{c^ie_i} \Rightarrow x=\sum_{i=1}^{k}{c^ie_i}+\alpha{e_d}.
\end{equation}
Así, cada $x \in V$ se escribe como combinación lineal de $\{e_1,...,e_k,e_d\}$ de manera única, con lo cual $\{e_1,...,e_k,e_d\}$ es una base de $V$ y por lo tanto $k=d-1$.
\end{itemize}
Resta probar que el número de componentes diagonales negativas, positivas y nulas $b(e_i,e_i)=a_i$ es igual en cualquier base. Para ello daremos caracterizaciones independientes de la base de estos números.
\begin{enumerate}
\item El número de elementos $a_i$ que son nulos es la dimensión del siguiente subespacio:
\begin{equation}
N=\{w \in V : b(v,w)=0 \forall v \in V\}.
\end{equation}
Es simple ver que es un subespacio. Se llama \emph{espacio nulo de b}. Los $e_i$ correspondientes a los $a_i$ conforman una base de $N$. En efecto, sea $w\in{N}\subset{V}$, se puede escribir $w=\sum_{i=1}^d{w^ie_i}$. Como $w \in N$, $b(v,w)=0 \forall v \in V$
\begin{equation}
b(v,w)=w^{i}b(v,e_i)=0 \ \ \forall v\in{V}.
\end{equation}
Si tomamos $v=e_j j\in\{1,...,d\}$, tenemos:
\begin{equation}
b(e_j,w)=w^{i}b(e_j,e_i)=w^{i}a_j\delta_{ij}=w^{i}a_i=0,
\end{equation}
de donde vemos que si $a_i=0$ entonces $w^i$ puede tener algún valor no nulo, pero si $a_i\neq{0}$ entonces $w^{i}$ necesariamente es nulo, con lo cual el vector $w \in {N}$ se escribe:
\begin{equation}
w=\sum_{j:a_j=0}{w^je_j}.
\end{equation}
Luego dado que la dimensión de un subespacio es independiente de la base y que un número finito de elementos de toda base ortogonal generan a $N$, el número de elementos diagonales nulos es el mismo en cualquier base ortogonal de $b$.
\item El número de elementos $a_i$ que valen $1$ es la dimensión de un \emph{subespacio maximal de definición positiva de $b$}.\\
Dichos subespacios no son únicos a menos que $b$ sea definida positiva o semidefinida negativa. Sin embargo, entre todos los espacios de definición positiva de $b$ habrá alguno de dimensión máxima.\\
Sea $W$ uno de estos esoacios de dimensión maximal con $s=\dim{W}$, sea $\{e_i\}$ una base ortonormal de dicho espacio, ordenada de modo tal que $a_1=...=a_k=1$ y $a_i\leq{0}$ $\forall \ i>k$ y sea $X$ el subespacio generado por $\{e_1,...,e_k\}$, todo $v \in X$ se escribe:
\begin{equation}
v=\sum_{i=1}^{s}{v^ie_i} \text{ con } v^i=0 \forall i>k.
\end{equation}
Además:
\begin{equation}
b(v,v)=\sum_{i=1}^{k}(v^i)^2\geq{0},
\end{equation}
entonces el espacio $X$ es un espacio de definición positiva de $b$ y como $W$ es un espacio maximal de definición positiva, tendremos $\dim{W}\geq{\dim{X}}=k$
Por otro lado definimos la transformación lineal $A:W\rightarrow{X}$ que asigna a cada $w=\sum_{i}w^ie_i \in{W}$ le asigna el vector en $X$, $Aw=\sum_{i=1}^k{w^ie_i}$.
Entonces los elementos $w$ en el núcleo de $A$, $Nu(A)$, son los que tienen $w^i=0 \forall i\leq{k}$. Además tenemos:
\begin{equation}
b(w,w)=w^iw^jb(e_i,e_j)=w^iw^ja_i\delta_{ij}=\sum_{i=1}^{s}(w^i)^2a_i=\sum_{i=k+1}^{s}a_i(w^i)^2\leq{0}
\end{equation}
Entonces $Nu(A)$ sólo es un espacio de definición positiva de $b$ si $Nu(A)=\{0\}$. Pero como por hipótesis $W$ es un espacio de definición positiva para $b$, necesariamente $Nu(A)=\{0\}$ y $A$ es monomorfismo. Además $\Im(W)=X$ con lo cual $A$ es un isomorfismo y por lo tanto $\dim{W}\leq{\dim{X}}=k$. Pero previamente habíamos concluído que $\dim{W}\geq{\dim{X}}=k$, entonces tenemos que $\dim{W}=k$ y el número de autovalores positivos en la matriz queda caracterizado por la dimensión de un subespacio.
\item El número de $a_i$ negativos determina la dimensión de un\emph{subespacio maximal de definición negativa de $b$}. La prueba es muy similar a la del caso de $a_i$ postivos
\end{enumerate} 
\end{proof}

\end{document}